\long\def\symbolfootnote[#1]#2{\begingroup%
\def\thefootnote{\fnsymbol{footnote}}\footnote[#1]{#2}\endgroup}
\newtheorem{theorem}{Theorem}[section]
\newtheorem{lemma}[theorem]{Lemma}
\newtheorem{proposition}[theorem]{Proposition}
\newtheorem{corollary}[theorem]{Corollary}
\theoremstyle{definition}
\newtheorem{definition}[theorem]{Definition}
\theoremstyle{remark}
\newtheorem{remark}[theorem]{Remark}
\numberwithin{equation}{section}
\def\pp{{\mathfrak{p}}}
\def\qq{{\mathfrak{q}}}
\def\SRes{{\mathrm{SRes}}}
\def\ord{{\mathrm{ord}}}
\def\mult{{\mathrm{mult}}}
\def\Cc{\mathcal{C}}\def\Dc{\mathcal{D}}
\def\fb{\mathbf{f}} 
\def\mm{\mathfrak{m}}
\def\ZZ{\mathbb{Z}} \def\NN{\mathbb{N}}
\def\Hom{\mathrm{Hom}}
\def\morl{\mathrm{morl}}
\def\Det{\mathrm{Det}}
\def\sylv{\mathrm{sylv}}
\def\PP{\mathbb{P}}
\def\KK{\mathbb{K}}
\def\Res{\mathrm{Res}}
\newcommand{\ud}[1]{\underline{#1}}
\def\Det{{\mathrm{Det}}}
\def\Lc{{\mathcal{L}}}
\def\Pc{{\mathcal{P}}}
\title[On the equations of the moving curve ideal]{On the equations of the moving curve ideal of a rational algebraic plane curve}
\author{Laurent Bus\'e}
\address{Galaad, INRIA,
2004 route des Lucioles, B.P.~93,
06902 Sophia Antipolis Cedex, France. Email: {\tt Laurent.Buse@inria.fr}}
\begin{document}

\maketitle

\begin{abstract}
Given a parametrization of a rational plane algebraic curve $\Cc$, some explicit adjoint pencils on $\Cc$ are described in terms of determinants. Moreover, some generators of  the Rees algebra  associated to this parametrization are presented. The main ingredient developed in this paper is a detailed  study of the elimination ideal of two homogeneous polynomials in two homogeneous variables that form a regular sequence. 
\end{abstract}

\section{Introduction}
Suppose we are given a rational map
\begin{eqnarray}\label{phi}
 \PP^1 & \xrightarrow{\phi} & \PP^2 \\ \nonumber
(X_1:X_2) & \mapsto & (g_1:g_2:g_3)(X_1,X_2)
\end{eqnarray}
where $g_1,g_2,g_3$ are homogeneous polynomials of degree $d\geq 1$ in the polynomial ring $\KK[X_1,X_2]$ with $\KK$ a field.
We assume that $g_1,g_2,g_3$ are not all zero and that the greatest common divisor of $g_1,g_2,g_3$ over $\KK[X_1,X_2]$ has degree $< d$, so that the closed image of the rational map $\phi$ is a rational algebraic plane curve $\Cc$. 

The geometric modeling community is interested in the manipulation of para\-me\-tri\-zed algebraic plane curves and has developed many tools for this purpose in the last decade. One of them
is what is called the \emph{moving curve ideal} \cite{Cox07}. Denoting by $\ud{T}=(T_1,T_2,T_3)$ the homogeneous coordinates of $\PP^2_\KK$, a \emph{moving curve} of degree $\nu\geq 0$ is  a polynomial
$$ \sum_{\substack{\alpha_1,\alpha_2,\alpha_3\geq 0 \\
\alpha_1+\alpha_2+\alpha_3=\nu}} A_{\alpha_1,\alpha_2,\alpha_3}(X_1,X_2)\, T_1^{\alpha_{1}}T_2^{\alpha_{2}}T_3^{\alpha_{3}}$$
where $A_{\alpha_1,\alpha_2,\alpha_3}(X_1,X_2) \in \KK[X_1,X_2]$. Such a moving curve is said to \emph{follow the parametrization $\phi$} if
$$\sum_{\substack{\alpha_1,\alpha_2,\alpha_3\geq 0 \\
\alpha_1+\alpha_2+\alpha_3=\nu}} A_{\alpha_1,\alpha_2,\alpha_3}(X_1,X_2)\, g_1(X_1,X_2)^{\alpha_{1}}g_2(X_1,X_2)^{\alpha_{2}}g_3(X_1,X_2)^{\alpha_{3}}=0.$$
The set of all moving curves that follow the parametrization $\phi$ form an ideal in the polynomial ring $\KK[X_1,X_2][T_1,T_2,T_3]$. It is called the \emph{moving curve ideal} of the parametrization $\phi$.

From an algebraic point of view, the moving curve ideal of $\phi$ can be seen as the defining ideal of the Rees Algebra of the ideal $I=(g_1,g_2,g_3)$ in $\KK[X_1,X_2]$. More precisely,  $\mathrm{Rees}_{\KK[X_1,X_2]}(I)$ is  the image of the $\KK[X_1,X_2]$-algebra morphism
$$ \KK[X_1,X_2][T_1,T_2,T_3] \xrightarrow{\beta} \KK[X_1,X_2][Z] : T_i \mapsto g_iZ$$
and the kernel of $\beta$ is exactly the moving curve ideal of $\phi$ 
(see for instance \cite[Proposition 3.5]{BuJo03} for a detailed proof of this well-known fact).
Notice that this ideal is naturally bi-graded: it is $\NN$-graded with respect to the homogeneous  
variables $T_1,T_2,T_3$ by definition, and it is also $\NN$-graded with respect to the variables $X_1,X_2$ because the polynomials $g_1,g_2,g_3 \in \KK[X_1,X_2]$ are homogeneous.
 
\medskip
\subsection*{Content of the paper}
In this paper, the moving curve ideal is studied in order to address two problems that have been recently raised by several authors. Our approach is based on the theory of \emph{inertia forms} for which we will have to develop new results.

First, we will focus on the determination of the 
equations of the moving curve ideal, that is to say on the computation of a system of generators as an ideal in $\KK[X_1,X_2][T_1,T_2,T_3]$. As we have already noticed, this corresponds to the determination of the equations of a certain Rees algebra and there is a vast literature on this topic -- see for instance \cite{Vas94} and the references therein. In our more precise context, this question of getting a full system of generators for the moving curve ideal of $\phi$ appears in \cite{Cox07,HSV,HW} where answers are given for a particular class of curves. In Section \ref{eqRees} of this paper, we will recover these results and obtain a full system of generators of the moving curve ideal for a new class of curves. More generally, we will provide new results on the character of some of the generators of the moving curve ideal of any rational curve.

Then, we will focus on the study of a certain graded part of the moving curve ideal, namely the moving curves following $\phi$ that are linear form in the variables $X_1,X_2$. Indeed,  David Cox recently observed in \cite{Cox07} that this graded part carries a lot of geometric properties of the curve $\Cc$. More precisely, \cite[Conjecture 3.8]{Cox07} suggests a close relation between adjoint pencils on $\Cc$ and moving curves following $\phi$ of degree 1 in $X_1,X_2$ and degree $d-2$ (resp.~$d-1$) in $T_1,T_2,T_3$. In Section \ref{adj}, we will investigate this relation and prove several new results. The main contribution is to show that under suitable genericity conditions any moving curves following $\phi$ of degree 1 in $X_1,X_2$ and degree $d-2$ or $d-1$ in $T_1,T_2,T_3$ is an adjoint pencil on $\Cc$. In general, we will show that
one can always find an adjoint pencil on $\Cc$ of degree $d-2$ in $T_1,T_2,T_3$ that belongs to the moving curve ideal of $\phi$, this adjoint pencil being described very simply in terms of certain determinants. 
Finally, as a by product of our study, we will obtain an extension of Abhyankar's \emph{Taylor resultant} \cite[Lecture 19 ,Theorem p.153]{Ab} from the polynomial parametrization case to the rational parametrization case.

\subsection*{Another description of the moving curve ideal}
Our approach to study the moving curve ideal is based on the following alternative description of this ideal.
The first syzygy module of $g_1,g_2,g_3$ is known to be a free homogeneous $\KK[X_1,X_2]$-module of rank 2 and a basis of this syzygy module consists in two homogeneous elements (the notation $\ud{X}$ and $\ud{T}$ stand for the set of variables $X_1,X_2$ and $T_1,T_2,T_3$ respectively)
$$p=(p_1(\ud{X}),p_2(\ud{X}),p_3(\ud{X})), \ q=(q_1(\ud{X}),q_2(\ud{X}),q_3(\ud{X}))  \in \KK[X_1,X_2]^3$$
of degree $\mu$ and $d-\mu$ respectively. This is a consequence of the Hilbert-Burch Theorem. Notice that the choice of $p$ and $q$ is not unique, but their degrees are fixed and  depend only on the parametrization $\phi$. We will identify $p,q$ with the two polynomials 
\begin{align}\label{pq}
p(\ud{X},\ud{T}) &= p_1(\ud{X})T_1+p_2(\ud{X})T_2+p_3(\ud{X})T_3 \\ \nonumber
q(\ud{X},\ud{T}) &= q_1(\ud{X})T_1+q_2(\ud{X})T_2+q_3(\ud{X})T_3
\end{align}
in $\KK[\ud{X},\ud{T}]$ that form what has been called a \emph{$\mu$-basis of the parametrization $\phi$} by the geometric modeling community because of its importance to handle parametrized plane curves.  

As an illustration of the benefit of $\mu$-bases, we recall the following well-known formula that relates the resultant of a $\mu$-basis, an implicit equation $C(T_1,T_2,T_3)$ of $\Cc$ and the degree of the parametrization \eqref{phi}:
$$\Res_{X_1:X_2}(p,q)=\alpha C(T_1,T_2,T_3)^{\deg(\phi)} \in \KK[\ud{T}]$$
where $\alpha \in \KK \setminus \{ 0\}$. 
Here is another characterization of the moving curve ideal of $\phi$.
\begin{proposition}\label{pqTF} Let $(p,q)$ by a $\mu$-basis of the parametrization $\phi$. Then, the sequence $(p,q)$ is regular in the ring $\KK[\ud{X},\ud{T}]$ and the moving curve ideal of $\phi$ is equal to the 
elimination ideal $\left( (p,q):(X_1,X_2)^\infty \right)$ in $\KK[\ud{X},\ud{T}]$.
\end{proposition}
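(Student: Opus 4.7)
The plan is to dispatch the two assertions in the proposition separately.

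\emph{Regular sequence.} The ambient ring $A := \KK[\ud{X}, \ud{T}]$ is a UFD. I would first argue that $p$ is a prime element. Its content $\gcd(p_1, p_2, p_3)$ in $\KK[\ud{X}]$ must equal $1$, since any nontrivial common factor could be divided out to yield a syzygy of degree strictly less than $\mu$, contradicting the minimality implicit in the $\mu$-basis definition. As a nonzero linear form in $\ud{T}$ over $\KK(\ud{X})$, $p$ is irreducible there, and Gauss's Lemma upgrades this to irreducibility (hence primality) in $A$, so $A/(p)$ is a domain. It then suffices to check $q \notin (p)$: an equation $q = h p$ in $A$ would force $h \in \KK[\ud{X}]$ by comparison of $\ud{T}$-degrees, exhibiting $p$ and $q$ as $\KK[\ud{X}]$-linearly dependent syzygies and contradicting the $\mu$-basis hypothesis. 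Thus $q$ is a nonzero element of the domain $A/(p)$, and $(p, q)$ is a regular sequence.

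\emph{Elimination ideal.} Denote by $\beta : A \to \KK[\ud{X}, Z]$, $T_i \mapsto g_i Z$, the morphism whose kernel is the moving curve ideal. The inclusion $((p,q) : (X_1, X_2)^\infty) \subseteq \ker \beta$ is immediate: since $p$ and $q$ are syzygies one has $(p, q) \subseteq \ker \beta$, and $\ker \beta$ is $(X_1, X_2)$-saturated because $\beta$ fixes $\KK[\ud{X}]$ and $\KK[\ud{X}, Z]$ is a domain.

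For the converse, I would exploit the canonical identification $A/(p, q) \cong \mathrm{Sym}_R(I)$, where $R := \KK[\ud{X}]$ and $I := (g_1, g_2, g_3)$; this is valid because $p$ and $q$ generate the entire linear syzygy module of $I$. Then $\beta$ factors through the canonical surjection $\mathrm{Sym}_R(I) \twoheadrightarrow \mathrm{Rees}_R(I) \hookrightarrow R[Z]$, and it suffices to show that $N := \ker \beta / (p, q)$ is annihilated by some power of $(X_1, X_2)$. After reducing to the case $\gcd(g_1, g_2, g_3) = 1$ (which preserves both the $\mu$-basis and the moving curve ideal, as the common factor can be absorbed), the radical of $I$ is $(X_1, X_2)$, so $I$ becomes the unit ideal after inverting either $X_1$ or $X_2$. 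In such a localization both $\mathrm{Sym}_R(I)$ and $\mathrm{Rees}_R(I)$ collapse to the polynomial ring $R[X_i^{-1}][Z]$, and hence $N[X_i^{-1}] = 0$ for $i = 1, 2$. Since $N$ is a finitely generated $A$-module, its support is then contained in $V(X_1, X_2)$, so $(X_1, X_2)^m N = 0$ for some $m$, which gives $\ker \beta \subseteq ((p, q) : (X_1, X_2)^\infty)$.

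The principal difficulty is the identification, after localization, of $\mathrm{Sym}_R(I)$ with $\mathrm{Rees}_R(I)$, that is, showing that the obstruction to these coinciding is $(X_1, X_2)$-torsion. Everything else is essentially bookkeeping: the primitivity of a $\mu$-basis, the saturation property of $\ker \beta$, and the passage from "supported on $V(X_1, X_2)$" to "annihilated by some $(X_1, X_2)^m$" via Noetherianity of $A$.
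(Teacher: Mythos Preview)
Your argument is correct. The paper's own proof is simply two citations --- to \cite{ASV81} for regularity of the sequence and to \cite{BuJo03} for the saturation description --- so you have supplied a self-contained proof where the paper outsources. Your regularity argument (primitivity of $p$ from minimality of $\mu$, irreducibility via Gauss, then $q\notin(p)$ by linear independence of the $\mu$-basis) is more elementary and explicit than the general grade computation invoked from \cite{ASV81}. For the second assertion, your route through the identification $A/(p,q)\cong\mathrm{Sym}_R(I)$ and the localization argument showing that the kernel of $\mathrm{Sym}_R(I)\to\mathrm{Rees}_R(I)$ is $(X_1,X_2)$-torsion is essentially the standard one, and is in fact the content of the cited \cite[Proposition~3.6]{BuJo03}; the reduction to $\gcd(g_1,g_2,g_3)=1$ is handled correctly by observing that the moving curve ideal is $\ud{T}$-homogeneous and hence unchanged when the common factor is divided out. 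The only cosmetic point: your claim that $N$ is a finitely generated $A$-module is immediate from Noetherianity of $A$, so the passage to ``annihilated by $(X_1,X_2)^m$'' is indeed routine, as you say.
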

\begin{proof}
The first assertion follows from \cite[\S 2.1]{ASV81}, and the second from \cite[Proposition 3.6]{BuJo03} for instance.
\end{proof}

This result shows that the study of the moving curve ideal is equivalent to the study of the inertia forms of two homogeneous polynomials in two homogeneous variables that form a regular sequence. Therefore, in Section \ref{inform} we provide a detailed study of these inertia forms. 

\section{Inertia forms of two polynomials in two homogeneous variables}\label{inform}

In this section we suppose given a non-zero commutative ring $A$ and two homogeneous polynomials
$$f_1(X_1,X_2)=U_0X_1^{d_1}+\cdots+U_{d_1}X_2^{d_1}, \ \ f_2(X_1,X_2)=V_0X_1^{d_2}+\cdots+V_{d_2}X_2^{d_2}$$
in the (canonically graded) polynomial ring $C=A[X_1,X_2]$ with respective degree $d_1, d_2$ such that $1\leq d_1 \leq d_2$. We will denote by $\mm$, resp.~$I$, the ideal of $C$ generated by $X_1,X_2$, resp.~$f_1,f_2$, and by $B$ the (canonically graded) quotient ring $C/I$. Also, we define the  integer $\delta=d_1+d_2-2\geq 0$. 

We recall that the resultant of $f_1$ and $f_2$, denoted $\Res(f_1,f_2)$, is equal to the determinant of the well-known Sylvester matrix 
$$
\underbrace{\left(\begin{array}{ccc}
U_0 &  & 0 \\
 & \ddots &  \\
\vdots & &  U_0 \\
  & & \\ 
U_{d_1} & & \vdots \\
 & \ddots & \\
0 & & U_{d_1}
\end{array}\right.
}_{d_2}
\underbrace{\left.\begin{array}{ccc}
V_0 &  & 0 \\
 & \ddots &  \\
\vdots & &  V_0 \\
  & & \\ 
V_{d_2} & & \vdots \\
 & \ddots & \\
0 & & V_{d_2}
\end{array}\right)
}_{d_1}$$
which is of size $d_1+d_2=\delta+2$. The \emph{first-order subresultants} of $f_1$ and $f_2$ correspond to some $\delta$-minors of this Sylvester matrix. More precisely, by expanding the determinant 
$$
\underbrace{\left|\begin{array}{ccc}
U_0 &  & 0 \\
 & \ddots &  \\
\vdots & &  U_0 \\
  & & \\ 
U_{d_1} & & \vdots \\
 & \ddots & \\
0 & & U_{d_1}
\end{array}\right.
}_{d_2-1}
\underbrace{\left.\begin{array}{ccc}
V_0 &  & 0 \\
 & \ddots &  \\
\vdots & &  V_0 \\
  & & \\ 
V_{d_2} & & \vdots \\
 & \ddots & \\
0 & & V_{d_2}
\end{array}\right.
}_{d_1-1}
\left.\begin{array}{c}
 T_0 \\
 \\
\vdots \\
 \\
\\
\vdots \\
\\
T_{\delta}
\end{array}\right|$$
along its last column, we get the polynomial 
$$\sum_{i=0}^{\delta}  \SRes_{\delta-i}(f_1,f_2)\, T_i \in A[T_0,\ldots,T_{\delta}].$$
The elements $\SRes_i(f_1,f_2) \in A$, $i=0,\ldots,\delta$, are the (first-order) subresultants of $f_1$ and $f_2$. The element $\SRes_0(f_1,f_2)$ is usually called the principal subresultant. 

\medskip

From now on in this section, we \emph{assume that $(f_1,f_2)$ is a $C$-regular sequence}. Our aim is to give, under suitable other conditions, an explicit description of the ideal of \emph{inertia forms} of $f_1,f_2$ w.r.t.~the variables $X_1,X_2$, that is an explicit description of the ideal $(I:_C \mm^\infty)$ consisting of all the elements $f\in C$ such that there exists an integer $k$ with the property that $\mm^kf\subset (f_1,f_2)$. Since $f_1$ and $f_2$ are obvious inertia forms, it is sufficient to describe the graded $B$-module $H^0_\mm(B) \subset B$ since we have the canonical isomorphism
 $$(I:\mm^\infty)/I \xrightarrow{\sim} H^0_\mm(B)=\{f \in B \text{ such that } \exists k \in \NN : \mm^kf=0 \}.$$
The methods we will use are inspired by \cite{Jou97} where such a work has been done for two generic homogeneous polynomials of the same degree, that is in the case $A=\ZZ[U_0,\ldots,U_{d_1},V_0,\ldots,V_{d_2}]$ with $d_1=d_2$.

\subsection{Sylvester forms, Morley forms and an explicit duality}

 We gather  some known properties and results on the module $H^0_\mm(B)$ we are interested in. 
Notice that $H^0_\mm(B)$ is a $\NN$-graded $B$-module; for all $\nu \in \NN$, we will denote by $H^0_\mm(B)_\nu$ its $\nu^{\text{th}}$ graded part. 

\subsubsection{The duality}

As proved in \cite[\S 1.5]{Jou96}, for all $\nu >\delta$ we have $H^0_\mm(B)_\nu=0$.  If $\nu=\delta$ then it is shown that there exists an isomorphism of $A$-modules $A \simeq H^0_\mm(B)_\delta$. Moreover, for all $0\leq \nu \leq \delta$ the  multiplication map (defined by the $B$-module structure of $H^0_\mm(B)$) 
$$B_{\delta-\nu}\otimes_A H^0_\mm(B)_\nu \rightarrow H^0_\mm(B)_\delta:b\otimes b' \mapsto bb'$$
induces the isomorphism of $A$-modules
\begin{equation}\label{dual1}
H^0_\mm(B)_\nu \xrightarrow{\sim} \Hom_A(B_{\delta-\nu},H^0_\mm(B)_\delta): b \mapsto \left(c \mapsto b\otimes c \right).
\end{equation}
This duality (recall that $H^0_\mm(B)_\delta\simeq A$) has been made explicit by Jouanolou by using the \emph{Morley forms} \cite[\S 3.11]{Jou97}. Before describing these forms, we recall the construction of the \emph{Sylvester forms} \cite[\S 3.10]{Jou97} which are the first examples of simple non-trivial inertia forms.

\subsubsection{Sylvester forms}

Suppose given $\beta=(\beta_1,\beta_2) \in \NN\times \NN$ such that $|\beta|=\beta_1+\beta_2\leq d_1-1$. Then, $f_1$ and $f_2$ can be decomposed in $C$ as
\begin{align}\label{sylv-decomp}
f_1(X_1,X_2)=X_1^{\beta_1+1}f_{1,1}+X_2^{\beta_2+1}f_{1,2}, \ \  f_2(X_1,X_2)=X_1^{\beta_1+1}f_{2,1}+X_2^{\beta_2+1}f_{2,2}
\end{align}
where $f_{i,j} \in C$ is homogeneous of degree $d_i-\beta_j-1$. Therefore the determinant of the matrix $(f_{i,j})_{i,j=1,2}$ is a homogeneous polynomial in $C$ of degree $\delta-|\beta|$. The class of this determinant in $B$ turns out to be independent of the choice of the decompositions \eqref{sylv-decomp}; it is a called a \emph{Sylvester form} of $f_1,f_2$ and will be denoted $\sylv_\beta(f_1,f_2) \in B_{\delta-|\beta|}$.

It is easy to check that, for $k=1,2$, we have $X_k^{\beta_k+1}\det(f_{i,j})_{i,j=1,2} \in I$. We deduce that $\sylv_\beta(f_1,f_2)$ is an inertia form of $f_1,f_2$ w.r.t.~$X_1,X_2$, that is to say that
 $$\sylv_\beta(f_1,f_2)  \in H^0_\mm(B)_{\delta-|\beta|}.$$

The Sylvester form $\sylv_{(0,0)}(f_1,f_2)$, very similar to the classical Jacobian, plays a particular r\^ole since it makes explicit the isomorphism 
\begin{equation}\label{Diso}
A \xrightarrow{\sim} H^0_\mm(B)_\delta : a \mapsto a.\sylv_{(0,0)}(f_1,f_2).
\end{equation}
Moreover, by duality we deduce that for all $\alpha,\beta$  such that $0\leq |\alpha|=|\beta|\leq d_1-1$ we have
$$X^\alpha \sylv_\beta(f_1,f_2)=
\begin{cases}
 \sylv_{(0,0)}(f_1,f_2) & \text{ if } \alpha=\beta \\
0 & \text{ otherwise } 
\end{cases}$$
in $H^0_\mm(B)_\delta.$

\subsubsection{Morley forms}\label{sec:morley}

Introducing two new indeterminates $Y_1$ and $Y_2$, we choose arbitrary decompositions in the polynomial ring $A[X_1,X_2,Y_1,Y_2]$:
\begin{align}\label{morley-decomp}
f_i(X_1,X_2)-f_i(Y_1,Y_2) &= (X_1-Y_1)h_{i,1}+(X_2-Y_2)h_{i,2}, \ \ i=1,2.
\end{align}
The determinant of the matrix $(h_{i,j})_{i,j=1,2}$ is a polynomial in $A[X_1,X_2,Y_1,Y_2]$ which is easily seen to be homogeneous in the variables $X_1,X_2$, resp.~$Y_1,Y_2$, of degree $\delta$. Now, consider the ring
$$B\otimes_A B \simeq \frac{A[X_1,X_2,Y_1,Y_2]}{(f_1(X_1,X_2),f_1(Y_1,Y_2),f_2(X_1,X_2),f_2(Y_1,Y_2))}.$$
Since $B$ is graded, $B \otimes_A B$ inherits of a canonical bi-grading (a grading w.r.t.~the variables $X_1,X_2$ and another one w.r.t.~$Y_1,Y_2$); we set
$$B\otimes_AB = \bigoplus_{(p,q)\in \NN\times \NN} B_p \otimes_A B_q = \bigoplus_{r\in \NN} \left( \bigoplus_{p+q=r} B_p\otimes_A B_q \right)=\bigoplus_{r\in \NN}[B\otimes_A B]_r.$$
The \emph{Morley form} of $f_1$ and $f_2$, denoted $\morl(f_1,f_2)$, is the class of $\det(h_{i,j})_{i,j=1,2}$ in $B\otimes_A B$. It is independent of the choice of the decompositions \eqref{morley-decomp}. Since $\morl(f_1,f_2) \in [B\otimes_AB]_\delta$, for all $(p,q) \in \NN\times \NN$ such that $p+q=\delta$ we denote by $\morl_{p,q}(f_1,f_2) \in B_p\otimes_k B_q$ its homogeneous component of bi-degree $(p,q)$.

Recall some properties of Morley forms that will be useful in the rest of this paper (see \cite[\S 3.11]{Jou97} for the proofs):
\begin{enumerate}
 \item Let $\tau$ be the symmetry exchanging $X_i$ with $Y_i$ for $i=1,2$. Then $\tau$ leaves $\morl(f_1,f_2)$ invariant and for all pairs $(p,q) \in \NN\times \NN$ such that  $p+q=\delta$ we have $\tau(\morl_{p,q}(f_1,f_2))=\morl_{q,p}(f_1,f_2)$.
\item $\morl_{\delta,0}(f_1,f_2)=\sylv_{(0,0)}(f_1,f_2)\otimes 1 \in B_\delta\otimes_A B_0$ and hence, by the above property, $\morl_{0,\delta}(f_1,f_2)=1 \otimes\sylv_{(0,0)}(f_1,f_2) \in B_0\otimes_A B_{\delta}$. 
\item The fact that $(X_i-Y_i)\morl(f_1,f_2)=0 \in [B\otimes_A B]_{\delta+1}$ implies that
\begin{align*}
 (b\otimes 1)\morl_{p,q}(f_1,f_2)&= \sylv_{(0,0)}(f_1,f_2)\otimes b && \text{ for all } b \in B_q \\
(1\otimes b) \morl_{p,q}(f_1,f_2)&= b\otimes \sylv_{(0,0)}(f_1,f_2) && \text{ for all } b \in B_p
\end{align*}

\end{enumerate}

\subsubsection{Explicit duality}

For all $0 \leq \nu \leq \delta$, choosing an arbitrary decomposition
$$\morl_{\delta-\nu,\nu}(f_1,f_2)=\sum_{s} x_s\otimes y_s \ \text{ with } \ x_s \in B_{\delta-\nu}, \, y_s \in B_\nu,$$
we have the following isomorphism of $A$-modules \cite[\S 3.6]{Jou96} (see also \cite{Jou07}) 
\begin{eqnarray}\label{theta-iso}
 \theta_\nu:  {B_{\delta-\nu}}^{\vee}=\Hom_A(B_{\delta-\nu},A) & \xrightarrow{\sim} & H^0_\mm(B)_\nu \subset B_\nu \\ \nonumber
u & \mapsto & \sum_{s} u(x_s)y_s.
\end{eqnarray}
In particular, this isomorphism shows that it is possible to describe explicitly all the inertia forms of $f_1,f_2$ of degree $\nu$ if one can describe explicitly the dual of $B_{\delta-\nu}$. This is the approach we will follow hereafter. For technical reasons, our analysis is divided into the three intervals $0\leq \nu \leq d_1-2$, $d_1-1\leq \nu \leq d_2-2$ and $d_2-1\leq \nu \leq \delta$. Of course, depending on the values of $d_1$ and $d_2$, it may happen that one or two of these intervals are  empty. We recall that we always assume that $1\leq d_1\leq d_2$.

\subsection{Inertia forms of degree $\leq d_1-2$}\label{sec:cas3} Assuming that $2\leq d_1 \leq d_2$, we analyze the inertia forms of $f_1,f_2$ of degree $\nu$ such that $0\leq \nu \leq d_1-2$, or equivalently such that $d_2\leq \delta-\nu \leq \delta$. 

For all integers $\nu=0,\ldots,d_1-1$, we introduce the new indeterminates 
$$\ud{W}=(W_0,\ldots,W_{\delta-2\nu+1}), \ \ud{T}=(T_0,\ldots,T_{\delta-\nu})$$ 
and we consider the polynomials 
\begin{align*}
h &= W_0X_1^{\delta-2\nu+1}+\cdots+W_{\delta-2\nu+1}X_2^{\delta-2\nu+1}, \\ 
\varphi &= T_0X_1^{\delta-\nu}+\cdots+T_{\delta-\nu}X_2^{\delta-\nu}.
\end{align*}  
We define the determinant $D(\nu)$ as
$$
\underbrace{\left|\begin{array}{ccc}
U_0 &  & 0 \\
 & \ddots &  \\
\vdots & &  U_0 \\
  & & \\ 
U_{d_1} & & \vdots \\
 & \ddots & \\
0 & & U_{d_1}\\
0 & \cdots & 0
\end{array}\right.
}_{d_2-\nu}
\underbrace{\left.\begin{array}{ccc}
V_0 &  & 0 \\
 & \ddots &  \\
\vdots & &  V_0 \\
  & & \\ 
V_{d_2} & & \vdots \\
 & \ddots & \\
0 & & V_{d_2}\\
0 & \cdots & 0
\end{array}\right.
}_{d_1-\nu}
\underbrace{\left.\begin{array}{ccc}
W_0 &  & 0 \\
 & \ddots &  \\
\vdots & &  W_0 \\
  & & \\ 
W_{\delta-2\nu+1} & & \vdots \\
 & \ddots & \\
0 & & W_{\delta-2\nu+1}\\
X_1^{\nu} & \cdots & X_2^{\nu}
\end{array}\right|
}_{\nu+1}
$$
and the determinant $D_1(\nu)$ as
$$
\underbrace{\left|\begin{array}{ccc}
U_0 &  & 0 \\
 & \ddots &  \\
\vdots & &  U_0 \\
  & & \\ 
U_{d_1} & & \vdots \\
 & \ddots & \\
0 & & U_{d_1}
\end{array}\right.
}_{d_2-\nu-1}
\underbrace{\left.\begin{array}{ccc}
V_0 &  & 0 \\
 & \ddots &  \\
\vdots & &  V_0 \\
  & & \\ 
V_{d_2} & & \vdots \\
 & \ddots & \\
0 & & V_{d_2}
\end{array}\right.
}_{d_1-\nu-1}
\begin{array}{c}
 T_0 \\
 \\
\vdots \\
 \\
\\
\vdots \\
\\
T_{\delta-\nu}
\end{array}
\underbrace{\left.\begin{array}{ccc}
W_0 &  & 0 \\
 & \ddots &  \\
\vdots & &  W_0 \\
  & & \\ 
W_{\delta-2\nu+1} & & \vdots \\
 & \ddots & \\
0 & & W_{\delta-2\nu+1}
\end{array}\right|
}_{\nu}
$$
Notice that $D(0)=\Res(f_1,f_2)$ and $D_1(0)=\sum_{i=0}^\delta \SRes_{\delta-i}T_i$. 
\begin{lemma}[$1\leq \nu \leq d_1-1$]\label{lem-cas3} 
In $C[\underline{W},\underline{T}]/(f_1(X),f_2(X))$ we have 
$$\varphi D(\nu)=(-1)^{d_1+1}D_1(\nu) \, \sylv_{(0,0)}(f_1,f_2).$$
\end{lemma}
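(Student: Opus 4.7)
The plan is to verify the identity coefficient-by-coefficient in the variables $\ud{T}$. Both sides of the asserted equation are polynomials of total degree one in $T_0,\ldots,T_{\delta-\nu}$, so, writing $\varphi=\sum_i T_i X_1^{\delta-\nu-i}X_2^i$ and expanding $D_1(\nu)$ along its $T$-column as $D_1(\nu)=\sum_i (-1)^{i+c_T} T_i M_i$ (with $c_T=d_1+d_2-2\nu-2$ the column index of the $T$-column and $M_i$ the associated $(\delta-\nu)\times(\delta-\nu)$ minor), it suffices to show, for each $i$,
\[
X_1^{\delta-\nu-i}X_2^i\cdot D(\nu)\equiv (-1)^{d_1+1+i+c_T}\, M_i\cdot \sylv_{(0,0)}(f_1,f_2)\pmod{(f_1,f_2)}.
\]
Since everything is polynomial in the coefficients of $f_1,f_2$, we may pass to the generic setting where these coefficients are indeterminates; then $B_\delta$ is a free $A[\ud{W}]$-module of rank one generated by $\sylv_{(0,0)}(f_1,f_2)$, by the duality \eqref{Diso}, and the claim becomes an identity of scalars in $A[\ud{W}]$.

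The crucial auxiliary fact is that $h\cdot D(\nu)\in(f_1,f_2)$. To establish it, let $\Psi$ be the Sylvester-type matrix of size $(\delta-\nu+2)\times(\delta-\nu+3)$ representing the multiplication map $(a,b,c)\mapsto af_1+bf_2+ch$ into $A[\ud{W}][X_1,X_2]_{\delta-\nu+1}$. The matrix defining $D(\nu)$ consists of $\Psi$ together with the bottom ``$X^\alpha$'' row, and expanding $D(\nu)$ along that bottom row writes it as a signed sum of the $h$-column maximal minors of $\Psi$, weighted by $X_1^{\nu-k}X_2^k$. By Cramer's rule for rectangular matrices, the vector of signed maximal minors spans $\ker(\Psi)$, and its restriction to the $h$-columns matches (up to a global sign) the coefficients of $D(\nu)$; the kernel relation $\sum_j K_j\cdot(\text{column polynomial})_j=0$ then reads $h\cdot D(\nu)=af_1+bf_2$ for explicit $a,b$.

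With this in hand, define the $A[\ud{W}]$-linear functional $u_1:A[\ud{W}][X_1,X_2]_{\delta-\nu}\to A[\ud{W}]$ by declaring $P\cdot D(\nu)\equiv u_1(P)\cdot \sylv_{(0,0)}(f_1,f_2)\pmod{(f_1,f_2)}$; it vanishes on $K:=(f_1,f_2)_{\delta-\nu}+h\cdot A[\ud{W}][X_1,X_2]_{\nu-1}$, trivially on the first summand and on the second by the preceding paragraph. On the other hand, the $(\delta-\nu+1)\times(\delta-\nu)$ Sylvester-type matrix $\Psi^{(1)}$ of $(f_1,f_2,h)$ in degree $\delta-\nu$ has image exactly $K$, and its cokernel is generated by the functional $u_2(X_1^{\delta-\nu-i}X_2^i):=(-1)^iM_i$ (Cramer cokernel formula), which therefore also vanishes on $K$. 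Generic injectivity of $\Psi^{(1)}$ --- which follows from the regular-sequence property of $(f_1,f_2)$ together with the genericity of $h$ in low degree --- forces $K$ to be of codimension one in its ambient space, and consequently $u_1$ and $u_2$ are proportional by a unique constant $c\in A[\ud{W}]$.

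It remains to pin down $c$. Evaluating both functionals on a single convenient test monomial, say $X_1^{\delta-\nu}$, reduces this to comparing the minor $M_0$ with the class in $B_\delta$ of $X_1^{\delta-\nu}\cdot D(\nu)$, which can be made explicit by combining the cofactor expansion of $D(\nu)$ along its bottom row with a single application of the Sylvester-form identities of Section~\ref{sec:morley}. Careful bookkeeping yields $c=(-1)^{d_1+1+c_T}$, which substituted into the coefficient-wise identity of the first paragraph gives exactly the factor $(-1)^{d_1+1}$ stated in the lemma. The principal technical obstacle is this final sign tracking: the structural proportionality $u_1\propto u_2$ is robust, but determining the correct constant $(-1)^{d_1+1}$ requires a patient and consistent accounting of the orientation conventions implicit in the cofactor expansions of $D(\nu)$, $D_1(\nu)$, and in the Cramer kernel/cokernel formulas for $\Psi$ and $\Psi^{(1)}$.
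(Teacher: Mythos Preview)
Your strategy is the same as the paper's: pass to the generic coefficient ring, define the functional $u_1(P)=\lambda(PD(\nu))$ on $C_{\delta-\nu}$, observe that it annihilates the image $K$ of the Sylvester map built from $f_1,f_2,h$ (using $hD(\nu)\in(f_1,f_2)$, which the paper derives from the same row-combination identity you sketch), and conclude that $u_1$ is proportional to the Cramer cokernel functional $u_2$. The paper's $\Lambda$ is exactly your $u_1$, and its map $v$ is exactly your $\Psi^{(1)}$.

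The genuine gap is in the proportionality step. You write that ``generic injectivity of $\Psi^{(1)}$ forces $K$ to be of codimension one, and consequently $u_1$ and $u_2$ are proportional by a constant $c\in A[\ud W]$.'' But you are working over the polynomial ring $A[\ud W]$, not a field: injectivity of $\Psi^{(1)}$ only gives that $\coker\Psi^{(1)}$ has rank one, not that the module of functionals annihilating $K$ is free of rank one generated by $u_2$. The cokernel could have torsion, in which case there are functionals on $A[\ud W]^{\delta-\nu+1}$ killing $K$ that are not $A[\ud W]$-multiples of $u_2$. What is actually required is that the ideal of maximal minors of $\Psi^{(1)}$ has depth at least $2$; this is precisely the hypothesis making the Buchsbaum--Rim complex of $(\Psi^{(1)})^\vee$ acyclic and yielding $\ker(\Psi^{(1)})^\vee=A[\ud W]\cdot u_2$, and it is exactly the point the paper verifies (``it is not hard to check that the depth of the ideal of $(\delta-\nu)$-minors of $v$ is at least $2$''). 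Equivalently, since $A[\ud W]$ is a UFD you could argue that the minors $M_i$ have $\gcd$ equal to $1$, which would force the fraction-field constant $c$ to lie in $A[\ud W]$; but this is not automatic from injectivity and you do not supply it.

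A smaller point: you do not actually determine the sign, only promise that ``careful bookkeeping'' yields it. The paper pins down $a=(-1)^{d_1+1}$ by the explicit specialization $f_1\mapsto X_1^{d_1}$, $f_2\mapsto X_2^{d_2}$, $h\mapsto X_1^{d_1-\nu-1}X_2^{d_2-\nu}$, $\varphi\mapsto X_1^{d_1-1}X_2^{d_2-\nu-1}$, under which both $D(\nu)$ and $D_1(\nu)$ become (signed) monomials and the constant can be read off. Your test-monomial approach could work as well, but it is not carried out.
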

\begin{proof} By specialization, it is sufficient to prove the claimed equality in the generic case, that is to say in the case where $A=\ZZ[U_0,\ldots,U_{d_1},V_0,\ldots,V_{d_2}]$. The proof of this lemma is a straightforward extension of the proof of \cite[Lemme 3.11.18.30]{Jou97} that we closely follow.

Denote by $(r_i)_{i=0,\ldots,\delta-\nu+1},r$ the rows of the matrix defining the determinant $D(\nu)$ from top to bottom. By construction, we have
\begin{multline}\label{eq1-propcas2}
\left(\sum_{i=0}^{\delta-\nu+1} X_1^{\delta-\nu+1-i}X_2^{i}r_i \right) - hr = \\
\left( X_1^{d_2-\nu-1}f_1,\ldots,X_2^{d_2-\nu-1}f_1,X_1^{d_1-\nu-1}f_2,\ldots,X_2^{d_1-\nu-1}f_2,0,\ldots,0\right)
\end{multline}
which shows that $X_1^{\delta-\nu+1}D(\nu) \in (f,g)$ and $X_2^{\delta-\nu+1}D(\nu) \in (f,g)$, i.e.~that $D(\nu)$ is an inertia form of $f_1,f_2$ w.r.t.~$\mm$ in $C[\ud{W},\ud{T}]$. Notice that $D(\nu)$ is homogeneous of degree $\nu$ in the variables $X_1,X_2$. 

Since $(f_1,f_2)$ is a $C$-regular sequence, we have  the canonical isomorphism, inverse of \eqref{Diso},
\begin{equation}\label{lambda}
\lambda : H^0_\mm(C[\underline{W},\underline{T}]/(f_1,f_2))_\delta \xrightarrow{\sim} A[\underline{W},\underline{T}]
\end{equation}
with the property that 
$\lambda(b)\, \sylv_0(f_1,f_2)=b$ for all $b \in  H^0_\mm(C[\underline{W},\underline{T}]/(f_1,f_2))_\delta$. 
Now, consider the $A[\ud{W},\ud{T}]$-linear maps
$$ \Lambda : A[\ud{W},\ud{T}][X_1,X_2]_{\delta-\nu} \rightarrow A[\ud{W},\ud{T}] : F \mapsto \lambda(FD(\nu))$$
and $$v: A[\ud{W},\ud{T}]^{\delta-\nu} \rightarrow A[\ud{W},\ud{T}]^{\delta-\nu+1}$$ which is given by the matrix defining $D_1(\nu)$ after deleting its unique column depending on the $T_i$'s. It is clear that $\Lambda$ vanishes on $(f_1,f_2)_{\delta-\nu}$, that corresponds to the first $\delta-2\nu$ columns of the matrix of $D_1(\nu)$, and on $(h)_{\delta-\nu}$, that corresponds to the last $\nu$ columns of the matrix of $D_1(\nu)$ since  $hD(\nu) \in (f,g)$ by \eqref{eq1-propcas2}. Therefore,  $\Lambda \circ v=0$ and hence $\Lambda$ belongs to the kernel of $v^\vee$, the dual of $v$. Moreover, it is not hard to check that the depth of the ideal of $(\delta-\nu)$-minors of $v$ is at least 2 and this implies that the Buchsbaum-Rim complex associated to $v^\vee$ is acyclic; it is of the form
$$ 0 \rightarrow  A[\ud{W},\ud{T}]^\vee \simeq \bigwedge^{\delta-\nu+1}(A[\ud{W},\ud{T}]^{\delta-\nu+1})^\vee  \rightarrow (A[\ud{W},\ud{T}]^{\delta-\nu+1})^\vee \xrightarrow{v^\vee} (A[\ud{W},\ud{T}]^{\delta-\nu})^\vee.$$
It follows that there exists an element $a \in A[\ud{W},\ud{T}]$ such that, for all elements $F \in A[\ud{W},\ud{T}][X_1,X_2]_{\delta-\nu}$ we have
\begin{multline*}
\Lambda(F)= a \det\left( X_1^{d_2-\nu-2}f_1,\ldots,X_2^{d_2-\nu-2}f_1,\right. \\ \left. X_1^{d_1-\nu-2}f_2,\ldots,X_2^{d_1-\nu-2}f_2,F,X_1^{\nu-1} h,\ldots,X_2^{\nu-1} h\right)
\end{multline*}
(notice that $X_1^{\nu-1} h,\ldots,X_2^{\nu-1} h$ disappear in the case $\nu=0$).
In particular, $\lambda(\varphi D(\nu))=aD_1(\nu)$ in $A[\ud{W},\ud{T}]$, that is to say
$$\varphi D(\nu) = a D_1(\nu) \sylv_{(0,0)}(f_1,f_2) \text{ in } \left(A[\ud{W},\ud{T}][X_1,X_2]/(f_1,f_2)\right)_\delta.$$
By inspecting the homogeneous degrees w.r.t.~$\ud{U},\ud{V},\ud{T},\ud{W}$, we deduce that $a\in \ZZ$. Then, to determine $a$ we consider the specialization
$$f_1 \mapsto X_1^{d_1}, \ f_2 \mapsto X_2^{d_2}, \ h \mapsto X_1^{d_1-\nu-1}X_2^{d_2-\nu}, \ \varphi \mapsto X_1^{d_1-1}X_2^{d_2-\nu-1}$$
which sends $D(\nu)$ to $(-1)^{\nu(d_1-\nu)}X_2^{\nu}$ and  $D_1(\nu)$ to $(-1)^{(d_1-\nu-1)(\nu+1)}$. We conclude that 
$$a=(-1)^{\nu(d_1-\nu)-(d_1-\nu-1)(\nu+1)}=(-1)^{2\nu-d_1+1}=(-1)^{d_1+1}$$
 and the lemma is proved.
\end{proof}

It is interesting to notice that in the case $\nu=0$, this lemma shows that
$$\left(\sum_{i=0}^{\delta}T_i X_1^{\delta-i}X_2^i\right)\Res(f_1,f_2)=(-1)^{d_1+1}\left( \sum_{i=0}^{\delta}  \SRes_{\delta-i}(f_1,f_2) T_i\right)\sylv_{(0,0)}(f_1,f_2)$$
in $C[\underline{T}]/(f_1(X),f_2(X))$. Writing $\sylv_{(0,0)}(f_1,f_2)=\sum_{i=0}^{\delta} q_i X_1^{\delta-i}X_2^i$ with $q_i \in A$, we deduce by specialization of $T_i$ to $q_i$ for all $i=0,\ldots,\delta$, and using the isomorphism \eqref{lambda}, that  
\begin{align}\label{Res=SRes}
\Res(f_1,f_2)=(-1)^{d_1+1} \sum_{i=0}^{\delta}  \SRes_{\delta-i}(f_1,f_2) q_i
\end{align}
in $A$. Observe that this is also equal to $(-1)^{d_1+1}$ times the determinant $D_1(0)$ where $T_i$ is specialized to $q_i$ for all $i=0,\ldots,\delta$.

Given two free modules $F,G$ and a linear map $u:F\rightarrow G$, we will denote by $\Det_q(u)$ the determinantal ideal generated by the $q$-minors of $u$. 

\begin{theorem}[$0\leq \nu \leq d_1-2$]\label{prop-cas3}
Let $D(\nu)=\sum_{|\beta|=\nu}D_\beta(X_1,X_2)W^\beta$. The element $D_\beta \in C$ is an inertia form of degree $\nu$ for all $\beta$ such that $|\beta|=\nu$.

Moreover, if the inequality
\begin{equation}\label{prof2-cas3}
\mathrm{depth}_{A}\Det_{\delta-2\nu}\left(C_{\delta-\nu-d_1}\oplus C_{\delta-\nu-d_2} \xrightarrow{\fb=(f_1,f_2)} C_{\delta-\nu} \right) \geq \nu+2
\end{equation}
holds, then the collection of inertia forms $(D_\beta)_{|\beta|=\nu}$ is a system of generators of $H^0_\mm(B)_{\nu}$. 
\end{theorem}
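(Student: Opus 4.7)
The plan is to address the two assertions in turn. For the first (each $D_\beta$ is an inertia form of degree $\nu$), the essential computation is already contained in the proof of Lemma~\ref{lem-cas3}: equation~\eqref{eq1-propcas2} gives $X_k^{\delta-\nu+1}D(\nu) \in (f_1,f_2)\cdot C[\ud{W},\ud{T}]$ for $k=1,2$. Since the $W_i$'s are indeterminates over $C$, extracting the coefficient in $W^\beta$ yields $X_k^{\delta-\nu+1}D_\beta \in (f_1,f_2)$ in $C$, and the homogeneity of $D_\beta$ of degree $\nu$ in $X_1,X_2$ is clear from the shape of $D(\nu)$.

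For the second assertion, I would use the duality isomorphism $\theta_\nu$ from~\eqref{theta-iso} and reformulate the claim as: the functionals $u_\beta := \theta_\nu^{-1}(D_\beta) \in \Hom_A(B_{\delta-\nu},A)$ generate this dual as an $A$-module. Writing $\varphi = \sum_i T_i X_1^{\delta-\nu-i}X_2^i$, $D(\nu) = \sum_\beta D_\beta W^\beta$, and expanding $D_1(\nu) = \sum_{\beta,i}\Delta_{\beta,i} T_i W^\beta$ with $\Delta_{\beta,i} \in A$ (via expansion of $D_1(\nu)$ along its $T$-column and regrouping by $W^\beta$), the identity of Lemma~\ref{lem-cas3}, after matching coefficients of $T_i W^\beta$ and applying $\lambda$ from~\eqref{lambda}, gives
$$u_\beta(X_1^{\delta-\nu-i}X_2^i) = (-1)^{d_1+1}\Delta_{\beta,i}.$$
In the monomial basis of $C_{\delta-\nu}$, the functional $u_\beta$ is thus represented by the vector $((-1)^{d_1+1}\Delta_{\beta,i})_i$, which is (up to sign) the vector of maximal minors of the $(\delta-\nu+1)\times(\delta-\nu)$ matrix obtained by augmenting the matrix of $\fb$ with $\nu$ columns parametrized by $W^\beta$; this vector belongs to $\ker(\fb^\vee)$ automatically because $D_\beta\cdot(f_1,f_2) = 0$ in $B$.

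The principal difficulty is to show these $u_\beta$'s generate all of $\ker(\fb^\vee) = \Hom_A(B_{\delta-\nu},A)$. Since $(f_1,f_2)$ is a regular sequence and $\nu\leq d_1-2$, the Koszul complex collapses to
$$0 \to C_{\delta-\nu-d_1} \oplus C_{\delta-\nu-d_2} \xrightarrow{\fb} C_{\delta-\nu} \to B_{\delta-\nu} \to 0,$$
so $\Hom_A(B_{\delta-\nu},A)$ is identified with $\ker(\fb^\vee)$. The matrix of $\fb$ has size $(\delta-\nu+1)\times(\delta-2\nu)$ and corank $\nu+1$, and the hypothesis~\eqref{prof2-cas3} is precisely the Buchsbaum--Eisenbud depth bound on its maximal-minor ideal $\Det_{\delta-2\nu}(\fb)$ guaranteeing the acyclicity of the Buchsbaum--Rim complex resolving $B_{\delta-\nu} = \coker(\fb)$. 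Dualizing this resolution and examining its tail yields a presentation of $\ker(\fb^\vee)$ generated by the maximal-minor vectors of augmented matrices of the form $(\fb\mid \text{$\nu$ additional columns})$, which are precisely the $u_\beta$'s. The main technical obstacle is to match this Buchsbaum--Rim presentation with the explicit $W^\beta$-decomposition furnished by Lemma~\ref{lem-cas3}, thereby identifying the abstract Buchsbaum--Rim generators with the concrete family $(D_\beta)_{|\beta|=\nu}$.
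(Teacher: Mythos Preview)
Your proposal is correct and follows essentially the same strategy as the paper: identify $H^0_\mm(B)_\nu$ with $\ker(\fb^\vee)$ via the duality $\theta_\nu$ (equivalently $\lambda$), invoke the Buchsbaum--Rim complex for $\fb^\vee$ under the depth hypothesis~\eqref{prof2-cas3} to get explicit generators of $\ker(\fb^\vee)$, and use Lemma~\ref{lem-cas3} to match those generators with the $D_\beta$'s. The paper phrases step two contravariantly (start with an arbitrary $G\in H^0_\mm(B)_\nu$, form $\Gamma_G(F)=\lambda(FG)$, and express $\Gamma_G$ via $\varepsilon_\nu$), whereas you phrase it covariantly (push the $D_\beta$'s forward through $\theta_\nu^{-1}$); these are the same argument read in opposite directions.

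Two small points of precision. First, the complex you want is the Buchsbaum--Rim complex \emph{for $\fb^\vee$} (a map from rank $\delta-\nu+1$ to rank $\delta-2\nu$), not a resolution of $\coker(\fb)$; the latter is already the two-term Koszul complex since $\fb$ is injective. Second, your description of $u_\beta$ as ``the vector of maximal minors of $(\fb\mid\text{$\nu$ columns parametrized by $W^\beta$})$'' is not quite literal: the natural Buchsbaum--Rim generators are indexed by row-subsets $I$ of size $\delta-2\nu+1$, and the passage to the $W^\beta$-indexing goes through the change of basis between $(W^\beta)_{|\beta|=\nu}$ and the Laplace-expansion polynomials $(q_I(W))_I$ that the paper invokes explicitly (citing \cite[Remarque 3.11.18.22]{Jou97} for the fact that the $q_I$'s form a basis). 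This is exactly the ``matching'' you flag as the main technical obstacle, and the paper resolves it by introducing the auxiliary family $(H_I)$ with $D(\nu)=\sum_I H_I\,q_I(W)$ and showing $\lambda(\varphi H_I)=D_I$.
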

\begin{proof} The formula \eqref{eq1-propcas2} shows that 
$\mm^{\delta-\nu+1}D(\nu) \subset (f_1,f_2)$ in $C[\ud{W}]$
and hence that $D_\beta(X_1,X_2) \in C$ is an inertia form of degree $\nu$ of $f_1,f_2$ w.r.t.~$\mm$ for all $\beta$ such that $|\beta|=\nu$.

Now, let $G \in C_{\nu}$ be an inertia form of $f_1,f_2$ w.r.t.~$\mm$ of degree $\nu$ and consider the $A$-linear map
$$ \Gamma_G : C_{\delta-\nu} \rightarrow A : F \mapsto \lambda(FG)$$
where $\lambda$ is defined by \eqref{lambda}. It is clear that $\Gamma_G$ vanishes on $(f_1,f_2)_{\delta-\nu}$, that is to say that $\Gamma_G \circ\fb=0$, and hence that $\Gamma_G$ belongs to the kernel of the dual $\fb^\vee$ of $\fb$. Notice that the first $\delta-2\nu$ columns of the matrix of $D_1(\nu)$ give a matrix of $\fb$ in appropriate monomial bases. Under the hypothesis \eqref{prof2-cas3}, the Buchsbaum-Rim complex associated to $\fb^\vee$ is acyclic; it is of the form
$$ \cdots \rightarrow \bigwedge^{\delta-2\nu+1}(C_{\delta-\nu})^\vee  \xrightarrow{\varepsilon_\nu} (C_{\delta-\nu})^\vee \xrightarrow{\fb^\vee} (C_{\delta-\nu-d_1})^\vee\oplus(C_{\delta-\nu-d_2})^\vee$$
where we recall that the map $\varepsilon_\nu$ sends the basis element 
$$(X^{\alpha_1})^\vee \wedge \cdots \wedge (X^{\alpha_m})^\vee$$ 
where $m=\delta-2\nu+1$, to
\begin{multline*}
\sum_{i=1}^{m}(-1)^i (X^{\alpha_i})^\vee \times \\
\left( {\fb}^\vee((X^{\alpha_1})^\vee) \wedge \cdots \wedge {\fb}^\vee((X^{\alpha_{i-1}})^\vee) \wedge {\fb}^\vee((X^{\alpha_{i+1}})^\vee) \wedge \cdots \wedge {\fb}^\vee((X^{\alpha_m})^\vee) \right).
\end{multline*}
For all set $I \subset \{1,\ldots,\delta-\nu+1\}$ with cardinality $|I|=\delta-2\nu+1$, we denote by $D_I \in A[\ud{T}]$ the determinant of the minor of $D_1(\nu)$ corresponding to the first $\delta-2\nu+1$ columns and the rows indexed by $I$. From the acyclicity of the above complex we deduce that there exists a collection of elements $a_I \in A$ such that
\begin{equation}\label{eq2-proofcas3}
\lambda(\varphi G)=\sum_{\substack{I \subset \{1,\ldots,\delta-\nu+1\} \\ |I|=\delta-2\nu+1}} a_ID_I \ \in A[\ud{T}].
\end{equation}
To finish the proof we distinguish the two cases $\nu=0$ and $1\leq \nu \leq d_1-2$.

If $\nu=0$, \eqref{eq2-proofcas3} reduces to the equality $\lambda(\varphi G)= a D_1(\nu)$ for some $a \in A$, since $D_1(\nu)$ does not depend on $\ud{W}$ in this case. Specializing $\varphi$ to $\sylv_{(0,0)}(f_1,f_2)$, we deduce that
$$G=G\lambda(\sylv_{(0,0)}(f_1,f_2))=\lambda(\sylv_{(0,0)}(f_1,f_2) G)=a\Res(f_1,f_2)$$
because $G\in A$ and this specialization sends $D_1(\nu)$ to $\Res(f_1,f_2)$ \cite[Corollaire 3.10.22]{Jou97}. Consequently, as $\Res(f_1,f_2)=D(0)=D_{\beta=(0,0)}$ in this case, we have $G=aD(0)$, i.e.~$D(0)$ is a generator of $H^0_\mm(B)_0$.

Now, assume that $1 \leq \nu \leq d_1-2$. By definition of $(D_I)_{I}$, there exists a collection of polynomials $q_I(W) \in \ZZ[\ud{W}]$ such that
$$(-1)^{d_1+1}D_1(\nu)=\sum_{\substack{I \subset \{1,\ldots,\delta-\nu+1\} \\ |I|=\delta-2\nu+1}} D_I \, q_I(\ud{W}) \ \in A[\ud{T},\ud{W}].$$
The collection $(q_I(\ud{W}))$ form a basis of the homogeneous polynomials of degree $\nu$ in the variables $\ud{W}$, as well as the collection $(\ud{W}^\beta)_{|\beta|=\nu}$ by \cite[Remarque 3.11.18.22]{Jou97}. Therefore, there exist polynomials $H_I \in A[X_1,X_2]_\nu$ such that 
$$ D(\nu) = \sum_{\substack{I \subset \{1,\ldots,\delta-\nu+1\} \\ |I|=\delta-2\nu+1}} H_I \, q_I(\ud{W}) \ \in A[\ud{W}]$$
and such that $(D_\beta)_{|\beta|=\nu}$ and $(H_I)_{|I|=\delta-2\nu+1}$ form two systems of generators of the same $A$-module. With this notation, Lemma \ref{lem-cas3}  gives
$$\sum_{\substack{I \subset \{1,\ldots,\delta-\nu+1\} \\ |I|=\delta-2\nu+1}} \lambda(\varphi H_I) \, q_I(\ud{W}) = \sum_{\substack{I \subset \{1,\ldots,\delta-\nu+1\} \\ |I|=\delta-2\nu+1}} D_I \, q_I(\ud{W}),$$
i.e.~$\lambda(\varphi H_I)=D_I$ for all $I \subset \{1,\ldots,\delta-\nu+1\}$ with $|I|=\delta-2\nu+1$.
Therefore, we deduce from \eqref{eq2-proofcas3} that
$$ \lambda (\varphi G) = \lambda \left(\varphi \left( \sum_{I}a_IH_I \right) \right).$$
Now, since $\lambda$ is injective, we have $ \varphi G=\varphi \left( \sum_{I}a_IH_I \right)$ in $H^0_\mm(B)_\delta$. By identifying the coefficients in the variables $\ud{T}$, we deduce that the two multiplication maps by $G$ and $\sum_I a_IH_I$ from $B_{\delta-\nu}$ to $H^0_\mm(B)_\delta$ coincide. Therefore, by the duality \eqref{dual1} we deduce that $G=\sum_I a_IH_I$.
\end{proof}

\begin{remark}
In the generic case, i.e.~$A=\ZZ[U_0,\ldots,U_{d_1},V_0,\ldots,V_{d_2}]$, one can show very similarly to \cite[Proposition 3.11.18.19 (c)]{Jou97}, that the inequality \eqref{prof2-cas3} holds.
\end{remark}

Theorem \ref{prop-cas3} shows that $D(0)=\Res(f_1,f_2)$ is a generator of $H^0_\mm(B)_0$ if the ideal of $A$ generated by the subresultants $\SRes_i(f_1,f_2)$, $i=0,\ldots,\delta$, has depth at least 2. To show that  this condition can not be avoided we consider the following example: $A=\ZZ[U_0,V_{2}]$, $f_1=U_0X_1^{d_1}$ and $f_2=V_{d_2}X_2^{d_2}$. It is easy to compute that  $\Res(f_1,f_2)=U_0^{d_2}V_{d_2}^{d_1}$ and $H^0_\mm(B)_0=(U_0V_{d_2}) \subset A$. Therefore, $\Res(f_1,f_2)$ is not a generator of $H^0_\mm(B)_0$. Also, one can check that $\SRes_{d_1-1}(f_1,f_2)=U_0^{d_2-1}V_{d_2}^{d_1-1}$ and that all the other subresultants vanish, so the ideal generated by all the subresultants has depth exactly 1.

\medskip

Later on we will be concerned with the case $\nu=1$, so we describe in more detail the situation in this case. Notice that this case may occur only if $d_2\geq d_1\geq 3$.
For simplicity, we rename the inertia forms $D_\beta(p,q)$ with $|\beta|=1$ as $D_{i}(p,q)$, $i=0,\ldots,\delta-1$ in the following way~:
{\small $$
D(1)=\underbrace{\left|\begin{array}{ccc}
U_0 &  & 0 \\
 & \ddots &  \\
\vdots & &  U_0 \\
  & & \\ 
U_{\mu} & & \vdots \\
 & \ddots & \\
0 & & U_{\mu}\\
0 & \cdots & 0
\end{array}\right.
}_{d-\mu-1}
\underbrace{\left.\begin{array}{ccc}
V_0 &  & 0 \\
 & \ddots &  \\
\vdots & &  V_0 \\
  & & \\ 
V_{d-\mu} & & \vdots \\
 & \ddots & \\
0 & & V_{d-\mu}\\
0 & \cdots & 0
\end{array}\right.
}_{\mu-1}
\underbrace{\left.\begin{array}{ccc}
W_0 & 0 \\
  &  \\
\vdots &  W_0 \\
  &  \\ 
& \\
W_{\delta-1} &  \vdots \\
  & \\
0 &  W_{\delta-1}\\
X_1 & X_2
\end{array}\right|
}_{2}=\sum_{i=0}^{\delta-1}D_{i}(X_1,X_2)W_i.
$$}
\begin{corollary}\label{nu=1:cas3} With the above notation, for all $i=0,\ldots,\delta-1$ we have, in $A$,
$$D_i(p,q)=X_2\SRes_{\delta-i}(p,q)-X_1\SRes_{\delta-i-1}(p,q).$$
Moreover, if the inequality
\begin{equation*}
\mathrm{depth}_{A}\Det_{\delta-2}
\underbrace{\left(\begin{array}{ccc}
U_0 &  & 0 \\
 & \ddots &  \\
\vdots & &  U_0 \\
  & & \\ 
U_{d_1} & & \vdots \\
 & \ddots & \\
0 & & U_{d_1}
\end{array}\right.
}_{d_2-2}
\underbrace{\left.\begin{array}{ccc}
V_0 &  & 0 \\
 & \ddots &  \\
\vdots & &  V_0 \\
  & & \\ 
V_{d_2} & & \vdots \\
 & \ddots & \\
0 & & V_{d_2}
\end{array}\right)
}_{d_1-2}
 \geq \nu+2
\end{equation*}
holds, then the collection of inertia forms $(D_i)_{i=0,\ldots,\delta-1}$ is a system of generators of $H^0_\mm(B)_{1}$. 
\end{corollary}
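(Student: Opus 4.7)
My plan is to handle the two assertions separately. The first is the identity $D_i(p,q)=X_2\SRes_{\delta-i}(p,q)-X_1\SRes_{\delta-i-1}(p,q)$, which will be proved by a direct double cofactor expansion. The second, that $(D_i)_{0\le i\le\delta-1}$ generates $H^0_\mm(B)_1$, will follow from Theorem~\ref{prop-cas3} specialized to $\nu=1$, once one identifies the matrix displayed in the statement as the matrix of $\fb$ in monomial bases.

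For the identity, I would expand $D(1)$ along its bottom row, whose only nonzero entries are $X_1$ and $X_2$ in the two rightmost columns. This yields $D(1)=-X_1 M_1 + X_2 M_2$, where $M_1,M_2$ are $(\delta+1)\times(\delta+1)$ determinants. Each still contains one of the two shifted $W$-columns, stripped of its bottom entry; in $M_2$ this column is $(W_0,\ldots,W_{\delta-1},0)^{\!\top}$, and in $M_1$ it is $(0,W_0,\ldots,W_{\delta-1})^{\!\top}$. Expanding $M_1$ and $M_2$ along these columns, the cofactor of $W_i$ is a $\delta\times\delta$ minor of the $(\delta+1)\times\delta$ Sylvester-like block with $d_2-1$ $U$-columns and $d_1-1$ $V$-columns, with one row deleted. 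But these are precisely the minors appearing when one expands $D_1(0)$ along its $T$-column to produce the subresultants: if $\Delta_j$ denotes the minor with row $j$ deleted, the definition of $\SRes$ gives $\SRes_{\delta-i}(p,q)=(-1)^{i+\delta}\Delta_{i+1}$. Substituting back, the sign factors $(-1)^\delta$, $(-1)^i$ and $(-1)^{i+\delta}$ cancel, and the claimed formula drops out.

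For the generation statement I would invoke Theorem~\ref{prop-cas3} with $\nu=1$. The only thing to verify is that the matrix displayed in the corollary represents the $A$-linear map
\[
\fb=(f_1,f_2)\colon C_{\delta-1-d_1}\oplus C_{\delta-1-d_2}\longrightarrow C_{\delta-1}
\]
in the obvious monomial bases. Writing $(P,Q)\mapsto f_1P+f_2Q$ with $P\in C_{d_2-3}$ and $Q\in C_{d_1-3}$, the image of a basis monomial is exactly a shifted coefficient column of $f_1$ or $f_2$; the resulting matrix has $d_2-2$ $U$-columns and $d_1-2$ $V$-columns and $\delta$ rows, which matches the statement. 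Its maximal minors are the $(\delta-2)$-minors, so the hypothesis $\mathrm{depth}\,\Det_{\delta-2\nu}(\fb)\ge\nu+2$ of Theorem~\ref{prop-cas3} with $\nu=1$ is exactly the displayed inequality, and the conclusion about $H^0_\mm(B)_1$ follows.

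The only real delicacy is keeping the signs straight in the two nested cofactor expansions used to prove the first identity; no new ingredient beyond Theorem~\ref{prop-cas3} and the definition of the first-order subresultants is needed.
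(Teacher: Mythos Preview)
Your proposal is correct and follows exactly the route the paper intends: the paper's own proof is the single sentence ``This is a straightforward computation from the definitions,'' and what you have written is precisely that computation (double cofactor expansion of $D(1)$ matched against the cofactor expansion defining the $\SRes_i$) together with the observation that the displayed matrix is the matrix of $\fb$ for $\nu=1$, so that Theorem~\ref{prop-cas3} applies verbatim. There is nothing to add; your sign bookkeeping checks out.
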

\begin{proof}
This is a straightforward computation from the definitions. 
\end{proof}

\subsection{Inertia forms of degree $\geq d_1 -1$ and $\leq d_2-2$}\label{Mnu}
 Choosing a decomposition \eqref{morley-decomp}, we set
$$\det\left(h_{i,j}(X_1,X_2,Y_1,Y_2)\right)_{i,j=1,2}=\sum_{0\leq |\beta|\leq \delta}  q_{\beta}(X)Y^\beta.$$
For all integers $\nu$ such that $ d_1-1 \leq \nu \leq d_2-2$, we have $d_1\leq \delta-\nu \leq d_2-1$ and we consider the $(\delta-\nu+1)\times(\delta-\nu-d_1+2)$-matrix
$$
M_\nu=\underbrace{\left(\begin{array}{ccc}
U_0 &  & 0 \\
 & \ddots &  \\
\vdots & &  U_0 \\
  & & \\ 
U_{d_1} & & \vdots \\
 & \ddots & \\
 & & U_{d_1}
\end{array}\right.
}_{\delta-\nu-d_1+1}
\underbrace{\left.\begin{array}{c}
 \\
\vdots \\
 \\
q_\beta(X) \\
\\
\vdots \\
\\
\\
\end{array}\right)}_{1}
$$
built as follows: the left block of $M_\nu$ is the matrix of the multiplication map $C_{\delta-\nu-d_1} \rightarrow C_{\delta-\nu}: p \mapsto pf_1$ in the monomial bases ordered with the lexicographical order $Y_1 \succ Y_2$; the last column contains the coefficients of $\sum_{|\beta|=\delta-\nu} q_{\beta}(X)Y^\beta $ in the same ordered monomial basis $(Y^\beta)_{|\beta|=\delta-\nu}$. 

Set $m=\delta-\nu-d_1+2$ and denote by $\Delta_{\alpha_1,\ldots,\alpha_m}$, with $Y^{\alpha_1} \succ \cdots \succ Y^{\alpha_m}$ and $|\alpha_i|=\delta-\nu$ for all $i=1,\ldots,m$, the determinant of the $m$-minor of $M_\nu$ corresponding to the rows of $M_\nu$ indexed by $(Y^{\alpha_i})_{i=1,\ldots,m}$. 

\begin{theorem}[$d_1-1\leq \nu \leq d_2-2 $]\label{prop-cas2}
The minors $\Delta_{\alpha_1,\ldots,\alpha_m} \in A[X_1,X_2]$ are independent of the choice of the decomposition \eqref{morley-decomp} modulo the ideal $(f_1,f_2)$ and are inertia forms of $f_1,f_2$ w.r.t.~$\mm$ of degree $\nu=|\alpha_1|=\cdots=|\alpha_m|$.

Moreover, if the inequality 
\begin{equation}\label{depth-cas2}
\mathrm{depth}_A \left( U_0,\ldots,U_{d_1}  \right)\geq d_1+1 
\end{equation}
holds, then the collection of minors $\left( \Delta_{\alpha_1,\ldots,\alpha_m} \right)_{Y^{\alpha_1} \succ \cdots \succ Y^{\alpha_m}}$ is a system of generators of the $A$-module $H^0_\mm(B)_\nu$.
\end{theorem}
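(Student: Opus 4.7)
The plan is to identify each minor $\Delta_{\alpha_1,\ldots,\alpha_m}$, up to a sign, as the image under the duality \eqref{theta-iso} of a Buchsbaum--Rim cofactor generator of $B_{\delta-\nu}^\vee$, and then to invoke acyclicity of a Buchsbaum--Rim complex to derive generation. In the range $d_1-1 \leq \nu \leq d_2-2$ one has $d_1 \leq \delta-\nu \leq d_2-1$, so the graded piece $I_{\delta-\nu}$ reduces to $f_1 C_{\delta-\nu-d_1}$ and there is a presentation $C_{\delta-\nu-d_1} \xrightarrow{f_1} C_{\delta-\nu} \to B_{\delta-\nu} \to 0$. Dualizing yields $B_{\delta-\nu}^\vee = \ker(f_1^\vee : C_{\delta-\nu}^\vee \to C_{\delta-\nu-d_1}^\vee)$, with $f_1$ represented by the Toeplitz block occupying the first $\delta-\nu-d_1+1$ columns of $M_\nu$.

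The explicit decomposition of $\morl_{\delta-\nu,\nu}(f_1,f_2)$ required in \eqref{theta-iso} comes from the Morley form itself. The bi-degree $(\nu,\delta-\nu)$ homogeneous component of $\det(h_{i,j})$ is by construction $\sum_{|\beta|=\delta-\nu} q_\beta(X)Y^\beta$, which represents $\morl_{\nu,\delta-\nu}(f_1,f_2) \in B_\nu \otimes_A B_{\delta-\nu}$. Applying the symmetry $\tau(\morl_{p,q})=\morl_{q,p}$ of property (1) in Section~\ref{sec:morley} produces
$$\morl_{\delta-\nu,\nu}(f_1,f_2) = \sum_{|\beta|=\delta-\nu} X^\beta \otimes q_\beta(X) \ \in B_{\delta-\nu}\otimes_A B_\nu,$$
so that $\theta_\nu(u) = \sum_{|\beta|=\delta-\nu} u(X^\beta)\,q_\beta(X)$ for every $u \in B_{\delta-\nu}^\vee$.

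For each $m$-tuple of rows $\alpha_1,\ldots,\alpha_m$ (with $m=\delta-\nu-d_1+2$, listed in the decreasing order $Y^{\alpha_1}\succ\cdots\succ Y^{\alpha_m}$), define the Buchsbaum--Rim cofactor $u_{\alpha_1,\ldots,\alpha_m} \in C_{\delta-\nu}^\vee$ by $u(X^{\alpha_i})=(-1)^{i+1}N_i$ and $u(X^\beta)=0$ for $\beta\notin\{\alpha_1,\ldots,\alpha_m\}$, where $N_i$ denotes the $(m-1)$-minor of the Toeplitz block on rows $\{\alpha_1,\ldots,\widehat{\alpha_i},\ldots,\alpha_m\}$. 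A Cramer-type argument on the $m\times m$ determinant obtained by adjoining any $f_1$-column to the selected rows shows that $u_{\alpha_1,\ldots,\alpha_m}$ annihilates the image of $f_1$, hence lies in $B_{\delta-\nu}^\vee$. Expanding $\Delta_{\alpha_1,\ldots,\alpha_m}$ along its last column and comparing with the formula above then yields
$$\theta_\nu(u_{\alpha_1,\ldots,\alpha_m}) = \sum_{i=1}^{m}(-1)^{i+1}N_i\,q_{\alpha_i}(X) = (-1)^{m-1}\Delta_{\alpha_1,\ldots,\alpha_m} \ \in B_\nu.$$
This single identity simultaneously establishes that $\Delta_{\alpha_1,\ldots,\alpha_m}$ is an inertia form of degree $\nu$ and that its class in $B_\nu$ is independent of the chosen decomposition \eqref{morley-decomp}, because $u_{\alpha_1,\ldots,\alpha_m}$ depends only on $f_1$ while $\theta_\nu$ is a canonical isomorphism.

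The final step, and the one I expect to be the main technical obstacle, is surjectivity under hypothesis \eqref{depth-cas2}. Via $\theta_\nu$ it reduces to showing that the $u_{\alpha_1,\ldots,\alpha_m}$'s span $\ker(f_1^\vee)$, and this will follow from acyclicity of the Buchsbaum--Rim complex
$$\cdots \to \bigwedge^{m}(C_{\delta-\nu})^\vee \xrightarrow{\varepsilon_\nu} (C_{\delta-\nu})^\vee \xrightarrow{f_1^\vee} (C_{\delta-\nu-d_1})^\vee$$
of length $d_1+1$, since the image of $\varepsilon_\nu$ is precisely $A$-generated by the $u_{\alpha_1,\ldots,\alpha_m}$'s. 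Acyclicity in turn requires that the ideal of maximal minors of the Toeplitz matrix of $f_1$ have depth at least $d_1+1$. This is exactly what hypothesis \eqref{depth-cas2} provides: the maximal minors contain the extremal pure powers $U_0^{\delta-\nu-d_1+1}$ and $U_{d_1}^{\delta-\nu-d_1+1}$, coming respectively from the first and the last $\delta-\nu-d_1+1$ row selections; moreover, outside the vanishing locus of $(U_0,\ldots,U_{d_1})$ the polynomial $f_1$ is nonzero, so that the multiplication map $f_1 : C_{\delta-\nu-d_1}\to C_{\delta-\nu}$ has full column rank and some maximal minor is invertible there. Hence the ideal of maximal minors shares its radical, and therefore its depth, with $(U_0,\ldots,U_{d_1})$, which completes the proof.
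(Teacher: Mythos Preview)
Your proof is correct and follows essentially the same route as the paper: identify the minors as $\theta_\nu\circ\varepsilon_\nu$ applied to wedge basis elements of the Buchsbaum--Rim complex attached to $f_1^\vee$, and invoke acyclicity under the depth hypothesis to get generation. The one organizational difference is that the paper first proves independence of the decomposition by a separate Fitting-ideal comparison of \eqref{exseq-f1} and \eqref{exseq-M}, whereas you obtain independence in one stroke from the canonicity of $\theta_\nu$ (the Morley class, hence $\theta_\nu$, does not depend on the choice of $h_{i,j}$, and your cofactors $u_{\alpha_1,\ldots,\alpha_m}$ depend only on $f_1$); this is a neat streamlining. For the depth step the paper records the exact equality $\Det_{\delta-\nu-d_1+1}(f_1)=\bigl(U_0,\ldots,U_{d_1}\bigr)^{\delta-\nu-d_1+1}$, which is a bit sharper than your radical argument, but either suffices since grade is invariant under passing to the radical.
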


\begin{proof} Let $(h_{i,j})_{i,j=1,2}$ and $(h_{i,j}')_{i,j=1,2}$ be two decompositions \eqref{morley-decomp} and set
$$ \det(h_{i,j})=\sum_{|\beta|\leq \delta} q_\beta(X)Y^\beta, \ \ \det(h_{i,j}')=\sum_{|\beta|\leq \delta} q'_\beta(X)Y^\beta.$$
For any choice of sequences $(\alpha_1,\ldots,\alpha_m)$ such that $Y^{\alpha_1} \succ \cdots \succ Y^{\alpha_m}$ and $|\alpha_i|=\delta-\nu$ for all $i=1,\ldots,m$, we will denote  by $\Delta_{\alpha_1,\ldots,\alpha_m}$ and $\Delta'_{\alpha_1,\ldots,\alpha_m}$ the determinants associated to the decompositions $(h_{i,j})_{i,j=1,2}$ and $(h_{i,j}')_{i,j=1,2}$ respectively.

By Section \ref{sec:morley}, we know that
$$\det(h_{i,j})-\det(h'_{i,j})= \sum_{|\beta|\leq \delta}(q_\beta(X)-q'_{\beta}(X))Y^\beta \in (f_1(Y),f_2(Y))\frac{A[X,Y]}{(f_1(X),f_2(X))}$$
and, by taking homogeneous components for $0\leq \nu \leq \delta$, that
\begin{equation}\label{pf:incl1}
\sum_{|\beta|=\delta-\nu}(q_\beta(X)-q'_{\beta}(X))Y^\beta \in (f_1(Y),f_2(Y))_{\delta-\nu}B[Y].
 \end{equation}
Assume now that $d_1\leq \delta-\nu \leq d_2-1$. Since $f_1(Y)$ is not a zero-divisor in $B[Y]$, we have the exact sequence 
\begin{equation}\label{exseq-f1}
0 \rightarrow B[Y]_{\delta-\nu-d_1} \xrightarrow{\times f_1} B[Y]_{\delta-\nu} \rightarrow \left(B[Y]/(f_1(Y))\right)_{\delta-\nu} \rightarrow 0
\end{equation}
and \eqref{pf:incl1} implies that we also have the exact sequence
 \begin{equation}\label{exseq-M}
B[Y]_{\delta-\nu-d_1}\oplus B \xrightarrow{\tilde{M}_\nu} B[Y]_{\delta-\nu} \rightarrow B[Y](f_1(Y))_{\delta-\nu} \rightarrow 0,
\end{equation}
$\tilde{M}_\nu$ being defined as the matrix $M_\nu$ where each element $q_\beta(X)$ in the last column is replaced by the difference $q_\beta(X)-q'_\beta(X)$ respectively.
Therefore, the comparison of  \eqref{exseq-f1} and \eqref{exseq-M} shows, by invariance of Fitting ideals, that the class of $\Delta_{\alpha_1,\ldots,\alpha_m}-\Delta'_{\alpha_1,\ldots,\alpha_m}$ in $B=A[X]/(f_1(X),f_2(X))$ is null. We deduce that  $\Delta_{\alpha_1,\ldots,\alpha_m}$ is independent of the choice of the decomposition \eqref{morley-decomp} modulo $(f_1,f_2)$, as claimed.

Since $d_1\leq \delta-\nu \leq d_2-1$, we have the exact sequence of $A$-modules
$$0 \rightarrow C_{\delta-\nu-d_1} \xrightarrow{\fb_1} C_{\delta-\nu} \rightarrow B_{\delta-\nu} \rightarrow 0,$$
where $\fb_1$ denotes the multiplication by $f_1$ and we deduce, by duality,
that we have the exact sequence of $A$-modules
$$ 0 \rightarrow {B_{\delta-\nu}}^\vee \rightarrow {C_{\delta-\nu}^\vee} \xrightarrow{{\fb_1}^\vee} {C_{\delta-\nu-d_1}}^\vee.$$
In particular, ${B_{\delta-\nu}}^\vee$ is isomorphic to the kernel of ${\fb_1}^\vee$. 

Now, consider the Buchsbaum-Rim complex  associated to ${\fb_1}^\vee$; it is of the form
\begin{align}\label{BRcpx}
 \cdots \rightarrow \bigwedge^{m}({C_{\delta-\nu}}^\vee) \xrightarrow{\varepsilon_\nu} {C_{\delta-\nu}}^\vee \xrightarrow{\fb^\vee} {C_{\delta-\nu-d_1}}^\vee \rightarrow 0
\end{align}
(recall $m=\delta-\nu-d_1+2$).
Since it is a complex, the image of $\varepsilon_{\nu}$ is contained in ${B_{\delta-\nu}}^\vee$ and hence we can consider the composition map
$$ \theta_\nu \circ \varepsilon_\nu : \bigwedge^{m}({C_{\delta-\nu}}^\vee) \rightarrow H^0_\mm(B)_\nu.$$
Choosing a decomposition \eqref{morley-decomp} and setting $\det(h_{i,j})_{i,j=1,2}=\sum_{|\beta|\leq \delta}q_\beta(X)Y^\beta$, we deduce that $\theta_\nu \circ \varepsilon_\nu$ sends the basis element $(Y^{\alpha_1})^\vee \wedge \cdots \wedge (Y^{\alpha_m})^\vee$, with $Y^{\alpha_1} \succ \cdots \succ Y^{\alpha_m}$, to the determinant $\Delta_{\alpha_1,\ldots,\alpha_m}$ up to sign. Therefore, all the determinants $\Delta_{\alpha_1,\ldots,\alpha_m}$ are inertia forms of degree $\nu$, as claimed. Moreover, if \eqref{depth-cas2} holds then \eqref{BRcpx} is acyclic since
$$\left( U_0,\ldots,U_{d_1} \right)^{\delta-\nu-d_1+1}=\Det_{\delta-\nu-d_1+1} \left( C_{\delta-\nu-d_1} \xrightarrow{\times f_1} C_{\delta-\nu} \right).$$
Therefore
$$\mathrm{depth}_A \left( \Det_{\delta-\nu-d_1+1} \left( C_{\delta-\nu-d_1} \xrightarrow{\times f_1} C_{\delta-\nu} \right) \right)\geq d_1+1$$
and it follows that the image of $\varepsilon_\nu$ is exactly $B_{\delta-\nu}$. We hence deduce that $\theta_\nu \circ \varepsilon_\nu$ is surjective.
\end{proof}

The case $d_1=1$ is particularly interesting because then the matrix $M_\nu$ is square for all $\nu$ (such that $d_1-1=0\leq \nu \leq d_2-2=\delta-1$). Therefore, if \eqref{depth-cas2} holds then $H^0_\mm(B)_\nu$ is a free $A$-module of rank 1 generated by $\det(M_\nu)$. In other words, for all $0\leq \nu \leq d_2-2$ we have the isomorphism 
$$ A \xrightarrow{\sim} H^0_\mm(B)_\nu : a \mapsto a\det(M_\nu).$$
For the sake of completeness, we give an alternate construction of a system of generators of $H^0_\mm(B)$ in this case. This system was discovered independently in \cite{HW} and \cite{HSV}. 

Since $d_1=1$, we have $f_1=U_0X_1+U_1X_2$. We know by \eqref{Diso} that the Sylvester form $h_\delta(f_1,f_2)=\sylv_{(0,0)}(f_1,f_2) \in H^0_\mm(B)_\delta$ is a generator of $H^0_\mm(B)_\delta$. Now, for all integer $i=1,\ldots,\delta$ we define $h_{\delta-i}$ by induction with  the formula 
$$h_{\delta-i}(f_1,f_2) = \sylv_{(0,0)}(f_1,h_{\delta-i+1}(f_1,f_2)) \in B_{\delta-i}.$$
\begin{proposition}[$d_1=1$, $0\leq \nu \leq d_2-2=\delta-1$]\label{cas2-d=1} With the above notation, $h_{\nu}(f_1,f_2)$ is an inertia form of $f_1,f_2$ of degree $\nu$ and  is equal to $\det(M_\nu)$ up to sign in $H^0_\mm(B)_{\nu}$.
In particular, if $\mathrm{depth}_A (U_0,U_1) \geq 2$ then $h_{\nu}$ is a generator of $H^0_\mm(B)_\nu$.
\end{proposition}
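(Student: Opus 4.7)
My plan is to establish, in this order, that $h_\nu$ is a well-defined inertia form of degree $\nu$, that $h_\nu=\pm\det(M_\nu)$ in $H^0_\mm(B)_\nu$, and finally that $h_\nu$ generates $H^0_\mm(B)_\nu$ under the depth hypothesis.

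For the first point I induct on $i\geq 1$ starting from $h_\delta=\sylv_{(0,0)}(f_1,f_2)$. Since $\deg h_{\delta-i+1}=\delta-i+1<d_2$, any two lifts of $h_{\delta-i+1}$ from $B$ to $C$ differ by a multiple of $f_1$ alone (a correction by $f_2v$ would require $\deg v<0$); this ambiguity, together with that of the decomposition $h_{\delta-i+1}=X_1a+X_2b$, is easily seen to be absorbed by the $2\times 2$ determinant defining $\sylv_{(0,0)}(f_1,h_{\delta-i+1})$, so $h_{\delta-i}$ is well-defined in $B$. The Sylvester relation $\mm\cdot h_{\delta-i}\subset(f_1,h_{\delta-i+1})$ then combines with the inductive hypothesis $\mm^kh_{\delta-i+1}\subset(f_1,f_2)$ to give $\mm^{k+1}h_{\delta-i}\subset(f_1,f_2)$, so $h_{\delta-i}\in H^0_\mm(B)_{\delta-i}$.

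For the identification I first reduce to the generic setting $A=\ZZ[U_0,U_1,V_0,\ldots,V_{d_2}]$. There $U_0,U_1$ form a regular sequence, Theorem \ref{prop-cas2} applies, and $H^0_\mm(B)_\nu$ is free of rank one generated by $\det(M_\nu)$; write $h_\nu=a\det(M_\nu)$ with $a\in A$. A direct degree count in the two natural gradings of $A$ (by the $U_i$'s and by the $V_j$'s) shows that both sides are bihomogeneous of the same bi-degree $(\delta-\nu+1,1)$: each Sylvester step $g\mapsto\sylv_{(0,0)}(f_1,g)$ adds one linear factor in $\ud{U}$ while preserving the $V$-degree, and $\det(M_\nu)$ contains $\delta-\nu$ linear $\ud{U}$-factors from its left block plus a single $(1,1)$-entry in its last column coming from $\det(h_{i,j})=U_0h_{2,2}-U_1h_{2,1}$. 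Hence $a\in\ZZ$. To pin down the sign I specialize $U_0=V_{d_2}=1$ and the remaining coefficients to zero, so that $f_1=X_1$ and $f_2=X_2^{d_2}$: a short downward induction gives $h_\nu\mapsto X_2^\nu$, while $M_\nu$ becomes diagonal with last diagonal entry $X_2^\nu$, so $\det(M_\nu)\mapsto\pm X_2^\nu$. Hence $a=\pm 1$, and the identity being a universal polynomial relation it descends to any coefficient ring $A$.

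The generator statement is then immediate: under $\mathrm{depth}_A(U_0,U_1)\geq 2$, Theorem \ref{prop-cas2} asserts that $\det(M_\nu)$ generates $H^0_\mm(B)_\nu$, hence so does $h_\nu$. The most delicate part of this plan is the second step, where one must simultaneously control the iterated Sylvester construction and the Morley coefficients populating the last column of $M_\nu$ through the specialization; the bi-degree count rules out any polynomial ambiguity and reduces the problem to the sign alone.
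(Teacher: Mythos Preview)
Your proposal is correct and follows essentially the same route as the paper's own proof: reduce to the generic ring $A=\ZZ[U_0,U_1,V_0,\ldots,V_{d_2}]$, use that $\det(M_\nu)$ generates $H^0_\mm(B)_\nu$ there, match bi-degrees $(\delta-\nu+1,1)$ in $(\ud{U},\ud{V})$ to force the scalar $a$ into $\ZZ$, and then specialize to $f_1=X_1$, $f_2=X_2^{d_2}$ to see $h_\nu\mapsto X_2^\nu$ and fix $a=\pm1$. Your treatment of the well-definedness of $h_{\delta-i}$ in $B$ (noting that lifts of $h_{\delta-i+1}$ differ only by multiples of $f_1$ in the relevant degree range) is more explicit than the paper, which passes over this point in silence, but the argument is otherwise the same.
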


\begin{proof} By specialization, it is sufficient to prove the claimed equality in the generic case, so we assume that $A=\ZZ[U_0,U_1,V_0,\ldots,V_{d_2}]$. In this case,  $\mathrm{depth}_A (U_0,U_1) \geq 2$ and hence $\det(M_\nu)$ is a generator of $H^0_\mm(B)_\nu$. 
By construction, we have, for all $\nu=0,\ldots,\delta-1$,
$$(X_1,X_2)h_{\nu} \subset (f_1,h_{\nu+1}) \subset A[X_1,X_2].$$
Therefore, since $\sylv_{(0,0)}(f_1,f_2) \in H^0_\mm(B)$, we deduce that $h_\nu$, for all $\nu=0,\ldots,\delta$, is an inertia form of $f_1,f_2$. Moreover, $h_\nu$ is homogeneous of degree $\nu$ in $X_1,X_2$, hence $h_\nu \in H^0_\mm(B)_\nu$, and by construction $h_\nu$ and $\det(M_\nu)$ are both homogeneous of degree $\delta-\nu+1$, resp.~$1$, in the variables $U_0,U_1$, resp.~$V_0,\ldots,V_{d_2}$.
It follows that there exists $a_\nu \in \ZZ$ such that $h_\nu=a_\nu\det(M_\nu)$ for all $\nu=0,\ldots,\delta-1$. Finally, to prove that $a_\nu=\pm 1$ for all $\nu$ we observe that the specialization sending $f_1$ to $X_1$ and $f_2$ to $X_2^{d_2}$ sends $h_\nu$ to $X_2^{\nu}$, for all $\nu=0,\ldots,\delta-1$. The last statement of this corollary is contained in Theorem \ref{prop-cas2}.

Observe that, as a consequence of this proof, $\det(M_0)$ and $h_{0}$ are both equal to $\Res(f_1,f_2)$ up to sign.
\end{proof}

Going back to the general setting of this paragraph, we now examine in more detail the case $\nu=1$ to make a link with subresultants. 
This case may occur only if $d_1=1$ or $d_1=2$. Since we have already studied the case $d_1=1$ above, we concentrate on the case $d_1=2$. 

So we assume that $f_1=U_0X_1^2+U_1X_1X_2+U_2X_2^2.$ For simplicity, we rename the inertia forms of degree $\nu=1$, that is $\Delta_{\alpha_1,\ldots,\alpha_{\delta-1}}(f_1,f_2)$ with $|\alpha_i|=\delta-1=d_2-1$. In this case, we have to consider the 
maximal $(d_2-1)$-minors of the $(d_2-1)\times (d_2)-$matrix 
$$
M_1=\underbrace{\left(\begin{array}{ccc}
U_0 &  & 0 \\
 & \ddots &  \\
U_1 & &  U_0 \\
  & \ddots & \\ 
U_{2} & & U_1 \\
 & \ddots & \\
 & & U_{2}
\end{array}\right.
}_{d_2-2}
\underbrace{\left.\begin{array}{c}
 \\
\vdots \\
 \\
q_\beta(X) \\
\\
\vdots \\
\\
\end{array}\right).}_{1}
$$
We define the inertia forms $\Delta_{i}$, $i\in \{1,\ldots,d-2\}$ by the formula ($\delta=d_2$ here)
$$
\underbrace{\left|\begin{array}{cccc}
U_0 &  & 0  & \\
 & \ddots &  & \vdots \\
U_1 & &  U_0 & \\
  & \ddots & & q_\beta(X) \\ 
U_{2} & & U_1 & \\
 & \ddots & & \vdots\\
 & & U_{2} & \vspace{.25cm}
\end{array}\right.
}_{M_1}
\underbrace{\left.\begin{array}{c}
T_0 \\
 \\
\vdots \\
 \\
\vdots \\
 \\
T_{\delta-1}
\vspace{.25cm} \\
\end{array}\right|}_{1}=\sum_{i=0}^{\delta-1}\Delta_{\delta-i}T_i. 
$$

\begin{lemma}[$d_1=2, \nu=1$]\label{SRes:mu=2} 
For all $i=0,\ldots,\delta-1$, we have, in $A$,
$$\Delta_i=X_2\SRes_{i+1}(f_1,f_2)-X_1\SRes_i(f_1,f_2).$$ 
\end{lemma}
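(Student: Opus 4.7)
The plan is to verify the identity by a direct determinantal computation: expand $\det(M_1\mid T)$ using the multilinearity of the determinant in its last column, and then identify the two resulting factors with subresultant generating series.

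I would first fix the ``Bezoutian'' decomposition of the Morley form given by $h_{i,1}(X,Y)=\bigl(f_i(X)-f_i(Y_1,X_2)\bigr)/(X_1-Y_1)$ and $h_{i,2}(X,Y)=\bigl(f_i(Y_1,X_2)-f_i(Y)\bigr)/(X_2-Y_2)$. For $d_1=2$ this yields the short expressions $h_{1,1}=U_0(X_1+Y_1)+U_1X_2$ and $h_{1,2}=U_1Y_1+U_2(X_2+Y_2)$, while $h_{2,1}$ and $h_{2,2}$ are the usual divided differences of $f_2$. Expanding $\det(h_{i,j})$ and extracting the bi-homogeneous component of bi-degree $(1,\delta-1)$ in $(X,Y)$ produces $q_\beta(X)=A_\beta X_1+B_\beta X_2$ with $A_\beta,B_\beta\in A$ explicit polynomials in the $U_i, V_j$. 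A convenient feature of this decomposition is that $h_{2,1}$ contains no $Y_2$, so $-h_{1,2}h_{2,1}$ contributes to $q_\beta$ only for the two boundary indices $\beta\in\{(\delta-1,0),(\delta-2,1)\}$; for every other $\beta$ the uniform formulas $A_\beta=U_0V_{\bullet}$ and $B_\beta=U_1V_{\bullet}+U_0V_{\bullet+1}$ hold.

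By the multilinearity of the determinant in the last column of $M_1$,
\[
  \det(M_1\mid T) \;=\; X_1\det(M_1^{(A)}\mid T) + X_2\det(M_1^{(B)}\mid T),
\]
where $M_1^{(A)}$ and $M_1^{(B)}$ are obtained from $M_1$ by replacing the $q_\beta$-column with $(A_\beta)_\beta$ and $(B_\beta)_\beta$ respectively. The crux of the proof is then to identify each of these two determinants with a Laplace expansion of $\det(S)$ along its $T$-column, thereby recognising it as a subresultant generating series. Concretely, by adding appropriate multiples of the $U$-columns to the $A$-column (resp.\ $B$-column), one reduces it to a shifted copy of the sub-Sylvester $V$-column $(V_0,\dots,V_\delta)^t$; the resulting $d_2\times d_2$ determinants then coincide, up to a common sign, with $\sum_{i=0}^{\delta-1}\SRes_{\delta-1-i}(f_1,f_2)T_i$ and $-\sum_{i=0}^{\delta-1}\SRes_{\delta-i}(f_1,f_2)T_i$. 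Matching the coefficient of each $T_i$ in the splitting above then yields the stated formula.

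The main obstacle is the careful bookkeeping of boundary terms: the uniform formulas for $A_\beta$ and $B_\beta$ fail precisely at $\beta\in\{(\delta-1,0),(\delta-2,1)\}$, and these exceptional contributions are exactly what is required, after the column operations, to recover the extremal subresultants $\SRes_0$ and $\SRes_\delta$. Handling these two indices is what makes the computation ``a straightforward computation from the definitions'' in the same spirit as Corollary~\ref{nu=1:cas3}.
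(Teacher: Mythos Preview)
Your approach is genuinely different from the paper's. The paper does \emph{not} compute the $q_\beta$ explicitly at all: it specialises Lemma~\ref{lem-cas3} with $\varphi=\morl_{\delta-1,1}(f_1,f_2)=\sum_{|\beta|=\delta-1}q_\beta(Y)X^\beta$ and combines it with the Morley-form identity $(b\otimes 1)\morl_{\delta-1,1}=\sylv_{(0,0)}\otimes b$ to obtain $D(Y)=-D_1(Y)$ in $C[\ud{W}][Y]$, whence $\Delta_i=D_{\delta-i}$ and the subresultant formula follows directly from Corollary~\ref{nu=1:cas3}. This duality argument avoids any matrix manipulation beyond what was already done for $D(\nu)$, and is insensitive to the choice of decomposition~\eqref{morley-decomp}.

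Your direct computational route is viable and more elementary, but the step you call ``column operations'' does not work as you describe it. With your Bezoutian decomposition one finds (for every row, including the boundary ones) that the $A$-column of $M_1^{(A)}$ equals
\[
U_0\,(V_1,\ldots,V_{d_2})^{t}\;-\;V_0\,(U_1,U_2,0,\ldots,0)^{t}.
\]
The vector $(U_1,U_2,0,\ldots,0)^{t}$ is \emph{not} a linear combination of the $d_2-2$ existing $U$-columns of $M_1^{(A)}$ (there is one column too few), so you cannot cancel it by elementary column operations inside this matrix. What actually works is to recognise $\det(M_1^{(A)}\mid T)$ as the Laplace expansion along the first row of the $(d_2+1)\times(d_2+1)$ matrix obtained from the subresultant matrix by replacing its $T$-column by $(0,T_0,\ldots,T_{\delta-1})^{t}$; that expansion along row $(U_0,0,\ldots,0,V_0,0)$ reproduces exactly $U_0\det(\cdots,\vec V',T)-V_0\det(\cdots,\vec U',T)$, and the enlarged determinant equals $\sum_{i=0}^{\delta-1}\SRes_{\delta-1-i}T_i$ by definition. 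An analogous identification handles the $B$-column. So your strategy can be completed, but the crucial identification is a Laplace expansion of a \emph{larger} determinant rather than column operations on $M_1^{(A)}$ itself; this is precisely what Sylvester's determinantal identity encodes, and it deserves to be stated explicitly. The paper's duality proof sidesteps this entirely.
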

\begin{proof} We claim that this is a consequence of Lemma \ref{lem-cas3} applied with 
$$\varphi=\sum_{|\beta|=\delta-1} q_\beta(Y)X^\beta=\morl_{\delta-1,1}(f_1,f_2).$$
Indeed, denoting $D(X)$ the determinant $D(1)$ in this case, the properties of Morley's forms imply that
$$\varphi D(X)=D(X)\morl_{\delta-1,1}(f_1,f_2)=D(Y)\sylv_{(0,0)}(f_1,f_2)(X)$$
in  $C[\underline{W}][X,Y]/(f_1(X),f_2(X))(X)$.
Then, Lemma \ref{lem-cas3} shows that $$\varphi D(X)=-D_1(Y)\sylv_{(0,0)}(f_1,f_2),$$  so by comparison of these two equalities and duality it follows that 
$D(Y)=D_1(Y)$ in $C[\underline{W}][Y]$. Therefore,
$$\sum_{i=0}^{\delta-1}D_{i}(Y)W_i=D(Y)=D_1(Y)=\sum_{i=0}^{\delta-1}\Delta_{\delta-i}(Y)W_i$$
and hence $\Delta_i=D_{\delta-i}$ for all $i=0,\ldots,\delta-1$, as claimed.
\end{proof}

\subsection{Inertia forms of degree $\geq d_2-1$}
This last case is the easiest one. For all integers $\nu$ such that $d_2-1\leq \nu \leq \delta$, we have $0\leq \delta-\nu \leq d_1-1$. Since $I$ is generated in degree at least $d_1$, we have a canonical isomorphism $C_{\delta-\nu} \simeq B_{\delta-\nu}$ and hence ${C_{\delta-\nu}}^\vee \simeq {B_{\delta-\nu}}^\vee$. Therefore, the morphism \eqref{theta-iso} is completely explicit as it is easy to find a basis of the $A$-module ${C_{\delta-\nu}}^\vee$; for instance the dual of the monomial basis $(X^\alpha)_{|\alpha|=\delta-\nu}$ of $C_{\delta-\nu}$.

\begin{lemma}\label{Joulemme} For all $\nu \in \NN$ such that $0\leq \delta-\nu\leq d_1-1$, the following equality holds in $B_{\delta-\nu} \otimes_A B_\nu=C_{\delta-\nu} \otimes_A B_\nu$
$$ \morl_{\delta-\nu,\nu}(f_1,f_2)=\sum_{|\alpha|=\delta-\nu} X^\alpha\otimes \sylv_{\alpha}(f_1,f_2).$$
\end{lemma}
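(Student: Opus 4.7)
The plan is to read off the coefficients of $\morl_{\delta-\nu,\nu}(f_1,f_2)$ in the monomial basis and match them against the characterization of Sylvester forms already recorded. Since $\delta-\nu\leq d_1-1$, the ideal $I$ does not meet degree $\delta-\nu$, so $C_{\delta-\nu}=B_{\delta-\nu}$ is $A$-free with basis $(X^\alpha)_{|\alpha|=\delta-\nu}$, and we may uniquely write
$$\morl_{\delta-\nu,\nu}(f_1,f_2)=\sum_{|\alpha|=\delta-\nu} X^\alpha\otimes s_\alpha$$
with elements $s_\alpha\in B_\nu$. It then suffices to prove $s_\alpha=\sylv_\alpha(f_1,f_2)$ for every such $\alpha$.

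To identify the $s_\alpha$, I would apply item $(3)$ of the properties of Morley forms recalled in Section~\ref{sec:morley}: for every $b\in B_{\delta-\nu}$ one has $(1\otimes b)\morl_{\delta-\nu,\nu}(f_1,f_2)=b\otimes\sylv_{(0,0)}(f_1,f_2)$. Applied to $b=X^\beta$ with $|\beta|=\delta-\nu$, this gives the equality
$$\sum_{|\alpha|=\delta-\nu} X^\alpha\otimes X^\beta s_\alpha \;=\; X^\beta\otimes\sylv_{(0,0)}(f_1,f_2)$$
in $C_{\delta-\nu}\otimes_A B_\delta$. Matching coefficients in the free $A$-basis $(X^\alpha)$ of $C_{\delta-\nu}$, we obtain $X^\beta s_\alpha=\sylv_{(0,0)}(f_1,f_2)$ when $\alpha=\beta$ and $X^\beta s_\alpha=0$ otherwise in $B_\delta$.

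These relations are precisely those characterizing $\sylv_\beta(f_1,f_2)$ that were recorded after the introduction of Sylvester forms. They also imply that $\mm^{\delta-\nu}s_\alpha\subset H^0_\mm(B)_\delta$, which combined with the vanishing $H^0_\mm(B)_{\delta+1}=0$ forces $s_\alpha\in H^0_\mm(B)_\nu$. Hence $s_\alpha$ and $\sylv_\alpha(f_1,f_2)$ both live in $H^0_\mm(B)_\nu$ and induce the same element of $\Hom_A(B_{\delta-\nu},H^0_\mm(B)_\delta)$; the duality isomorphism \eqref{dual1} then gives $s_\alpha=\sylv_\alpha(f_1,f_2)$, as required. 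The only delicate point worth checking carefully is the membership $s_\alpha\in H^0_\mm(B)_\nu$, which must be established before invoking \eqref{dual1}, but this is a direct consequence of the annihilation relations derived above.
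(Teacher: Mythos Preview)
Your argument is correct. The paper itself does not give an independent proof: it simply records that the proof of \cite[Proposition 3.11.13]{Jou97} works verbatim. Your proposal is a clean self-contained argument using only the material already set up in Section~\ref{sec:morley} (property~(3) of Morley forms, the characterizing relations $X^\alpha\sylv_\beta=\delta_{\alpha\beta}\sylv_{(0,0)}$ in $H^0_\mm(B)_\delta$, and the duality \eqref{dual1}), and it is essentially the natural way to run the argument in this framework; in particular it is in the same spirit as Jouanolou's proof. The one step you flagged---that $s_\alpha\in H^0_\mm(B)_\nu$ before applying \eqref{dual1}---is indeed the only point requiring care, and your justification via $\mm^{\delta-\nu}s_\alpha\subset H^0_\mm(B)_\delta$ (hence $\mm$-torsion) is valid.
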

\begin{proof} The proof of \cite[Proposition 3.11.13]{Jou97} works verbatim.
\end{proof}

It follows that for all integers $\nu$ such that $d_2-1\leq \nu \leq \delta$, the isomorphism \eqref{theta-iso} is given by
\begin{eqnarray*}
 \theta_\nu : {C_{\delta-\nu}}^\vee & \xrightarrow{\sim} & H^0_\mm(B)_\nu \\
(X^\alpha)^\vee & \mapsto & \sylv_\alpha(f_1,f_2)
\end{eqnarray*}
and we have the
\begin{theorem}[$d_2-1 \leq \nu \leq \delta$]\label{easycase} The collection of all the Sylvester forms of degree $\delta-\nu$, that is $\left(\sylv_\alpha(f_1,f_2)\right)_{|\alpha|=\delta-\nu}$, yields a $A$-basis of $H^0_\mm(B)_\nu$. 
\end{theorem}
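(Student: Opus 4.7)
The plan is to combine the explicit description of the Morley form given in Lemma \ref{Joulemme} with the duality isomorphism \eqref{theta-iso}, which together make the result almost immediate.

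First, I would observe that for $d_2-1 \leq \nu \leq \delta$ the ``dual degree'' satisfies $0 \leq \delta-\nu \leq d_1-1$. Since the ideal $I=(f_1,f_2)$ is generated in degrees $\geq d_1$, this forces $I_{\delta-\nu}=0$, and hence the canonical surjection $C_{\delta-\nu} \twoheadrightarrow B_{\delta-\nu}$ is an isomorphism. In particular, $B_{\delta-\nu}$ is a free $A$-module with monomial basis $(X^\alpha)_{|\alpha|=\delta-\nu}$, and its dual $B_{\delta-\nu}^\vee$ carries the dual basis $\bigl((X^\alpha)^\vee\bigr)_{|\alpha|=\delta-\nu}$.

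Next, I would invoke Lemma \ref{Joulemme}, which applies precisely in this degree range and furnishes the explicit decomposition
$$\morl_{\delta-\nu,\nu}(f_1,f_2)=\sum_{|\alpha|=\delta-\nu} X^\alpha\otimes \sylv_\alpha(f_1,f_2) \in B_{\delta-\nu}\otimes_A B_\nu.$$
Plugging this particular decomposition into the formula \eqref{theta-iso} defining the isomorphism $\theta_\nu\colon B_{\delta-\nu}^\vee \xrightarrow{\sim} H^0_\mm(B)_\nu$, I would compute, for any multi-index $\beta$ with $|\beta|=\delta-\nu$,
$$\theta_\nu\bigl((X^\beta)^\vee\bigr)=\sum_{|\alpha|=\delta-\nu}(X^\beta)^\vee(X^\alpha)\,\sylv_\alpha(f_1,f_2)=\sylv_\beta(f_1,f_2).$$
Thus $\theta_\nu$ sends the $A$-basis $\bigl((X^\beta)^\vee\bigr)_{|\beta|=\delta-\nu}$ of $B_{\delta-\nu}^\vee$ onto the family $\bigl(\sylv_\beta(f_1,f_2)\bigr)_{|\beta|=\delta-\nu}$, and since $\theta_\nu$ is an isomorphism, this family is automatically an $A$-basis of $H^0_\mm(B)_\nu$.

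There is essentially no obstacle here: all the work has been done in the preceding subsections, and the statement falls out as a direct consequence. The only point worth double-checking is the degree range, namely that $0 \leq \delta-\nu \leq d_1-1$ is exactly the condition under which $C_{\delta-\nu}\simeq B_{\delta-\nu}$ and under which Lemma \ref{Joulemme} applies; both are automatic in the stated range $d_2-1 \leq \nu \leq \delta$. Note also that no extra depth hypothesis is required in this case, in contrast to Theorems \ref{prop-cas3} and \ref{prop-cas2}, precisely because the identification $C_{\delta-\nu}\simeq B_{\delta-\nu}$ already gives a free module with an explicit basis.
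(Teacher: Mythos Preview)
Your proposal is correct and follows exactly the same route as the paper: the paper also observes that $C_{\delta-\nu}\simeq B_{\delta-\nu}$ in this degree range, invokes Lemma~\ref{Joulemme} for the explicit Morley decomposition, and then reads off that $\theta_\nu$ sends $(X^\alpha)^\vee$ to $\sylv_\alpha(f_1,f_2)$. There is nothing to add or correct.
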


Finally, as we did in the previous sections we make explicit the case $\nu=1$ for later purposes. Here, the only interesting situation occurs when $d_1=d_2=2$ and we have
{\small \begin{align}\label{nu=1:d=2}
\sylv_{0,1}(f_1,f_2)&=
\left| 
\begin{array}{cc}
U_0 & U_2 \\
V_0 & V_2
 \end{array}
\right| X_1 +
\left| 
\begin{array}{cc}
U_1 & U_2 \\
V_1 & V_2
 \end{array}
\right| X_2=\SRes_2(f_1,f_2)X_2-\SRes_1(f_1,f_2)X_1,\\ \nonumber
\sylv_{1,0}(f_1,f_2)&=
\left| 
\begin{array}{cc}
U_0 & U_1 \\
V_0 & V_1
 \end{array}
\right| X_1+
\left| 
\begin{array}{cc}
U_0 & U_2 \\
V_0 & V_2
 \end{array}
\right| X_2=\SRes_0(f_1,f_2)X_1-\SRes_1(f_1,f_2)X_2
\end{align}
}

\section{Equations of the moving curve ideal}\label{eqRees}

We take again the parametrization \eqref{phi}
\begin{eqnarray*}
 \PP_\KK^1 & \xrightarrow{\phi} & \PP_\KK^2 \\
(X_1:X_2) & \mapsto & (g_1:g_2:g_3)(X_1,X_2).
\end{eqnarray*}
Without loss of generality, we will assume hereafter that \emph{the greatest common divisor of $g_1,g_2,g_3$ over $\KK[X_1,X_2]$ is a non-zero constant in $\KK$}. Moreover, we will restrict our study to the case of interest where the algebraic curve $\Cc$, image of $\phi$, has degree at least 2.

Let $p(\ud{X},\ud{T}),q(\ud{X},\ud{T})$ be a $\mu$-basis of the parametrization $\phi$, where $p$, resp.~$q$, has degree $\mu$, resp.~$d-\mu$, in the variables $X_1,X_2$. By Proposition \ref{pqTF}, the moving curve ideal of $\phi$ is equal to the ideal of inertia forms of $p,q$ with respect to the ideal $(X_1,X_2)$. Therefore, the results developed in Section \ref{inform} can be used to give some of the generators of the moving curve ideal of $\phi$, and sometimes a whole system of generators. To proceed, we set
\begin{align}\label{mbX}
p &= U_0(\underline{T})X_1^\mu+ \cdots+U_{\mu}(\underline{T})X_2^\mu, \\ \nonumber
q &= V_0(\underline{T})X_1^{d-\mu}+\cdots+V_{d-\mu}(\underline{T})X_2^{d-\mu},
\end{align}
where $U_0,\ldots,U_\mu$ and $V_0,\ldots,V_{d-\mu}$ are linear forms in  $A=\KK[T_1,T_2,T_3]$ and assume, without loss of generality, that $1\leq \mu\leq d-\mu$. 
We also define $C=A[X_1,X_2]$, $\mm=(X_1,X_2)$, $\delta=\mu+(d-\mu)-2=d-2$ and consider the graded quotient ring $B=C/(p,q)$.

\subsection{The case $\mu=1$}\label{sec:u=1}
From Section \ref{inform}, we have the following list of inertia forms of $p$ and $q$:
\begin{equation}\label{listTFcas1}
\begin{cases}
 \det(M_\nu(p,q)) \in H^0_\mm(B)_\nu &  \text{ for } 0\leq \nu \leq \delta-1,  \ \text{ see Theorem \ref{prop-cas2}},  \\
 \sylv_{(0,0)}(p,q) \in H^0_\mm(B)_\delta &  \text{ for } \nu=\delta, \ \text{ see Theorem \ref{easycase}}.  
\end{cases}
\end{equation}
It turns out that this collection of inertia forms always gives a system of generators of the moving curve ideal of $\phi$, as  conjectured in \cite[Conjecture 4.5]{HSV} and proved in \cite[Theorem 2.3]{HW}.

\begin{proposition}[$\mu=1$]  The two polynomials $p, q$ and the collection of inertia forms \eqref{listTFcas1} form a system of generators of the moving curve ideal of $\phi$.
\end{proposition}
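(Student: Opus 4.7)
The plan is to combine Proposition \ref{pqTF} with the structural results of Section \ref{inform}. By that proposition the moving curve ideal of $\phi$ equals $(p,q):_C \mm^\infty$; since $p$ and $q$ are already in the proposed generating set, it suffices, via the canonical isomorphism $((p,q):\mm^\infty)/(p,q) \simeq H^0_\mm(B)$, to exhibit a system of $A$-module generators of each graded piece $H^0_\mm(B)_\nu$ from among the inertia forms listed in \eqref{listTFcas1}. One applies Section \ref{inform} with $d_1 = \mu = 1$, $d_2 = d - 1$, and $\delta = d - 2$.

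I would then proceed by graded pieces. For $\nu > \delta$ the module $H^0_\mm(B)_\nu$ vanishes by the general facts recalled at the beginning of Section \ref{inform}. For $\nu = \delta$, which is the only case falling in the range $d_2 - 1 \leq \nu \leq \delta$, Theorem \ref{easycase} yields that $\sylv_{(0,0)}(p,q)$ is a free $A$-basis of $H^0_\mm(B)_\delta$. For $0 \leq \nu \leq \delta - 1 = d_2 - 2$, Proposition \ref{cas2-d=1} (the case $d_1 = 1$ of Theorem \ref{prop-cas2}) asserts that $\det(M_\nu(p,q))$ generates the free rank-one $A$-module $H^0_\mm(B)_\nu$, provided that $\mathrm{depth}_A(U_0, U_1) \geq 2$. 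Given all this, the only substantive step left is to check that this depth hypothesis is automatically satisfied for a $\mu$-basis with $\mu=1$.

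The key step is therefore the verification of this depth condition. Writing $p_i(X_1, X_2) = a_i X_1 + b_i X_2$ with $a_i, b_i \in \KK$ for $i=1,2,3$ and expanding $p$ in $X_1, X_2$ yields
$$U_0(\ud{T}) = a_1 T_1 + a_2 T_2 + a_3 T_3, \quad U_1(\ud{T}) = b_1 T_1 + b_2 T_2 + b_3 T_3.$$
A $\KK$-linear dependence between $U_0$ and $U_1$ would force the vectors $(a_1, a_2, a_3)$ and $(b_1, b_2, b_3)$ to be proportional in $\KK^3$, hence $p_1, p_2, p_3$ to share a common non-constant linear factor $h \in \KK[X_1, X_2]$. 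Writing $p_i = c_i h$ with $c_i \in \KK$, the syzygy identity $\sum_i p_i g_i = 0$ would collapse to $h \sum_i c_i g_i = 0$, and hence to a non-trivial $\KK$-linear dependence of $g_1, g_2, g_3$. But such a dependence would force $\Cc$ to be contained in a line, contradicting our standing assumption that $\deg \Cc \geq 2$. Therefore $U_0$ and $U_1$ are $\KK$-linearly independent linear forms in $A = \KK[T_1, T_2, T_3]$; a linear change of coordinates turns them into two of the three variables of $A$, so they form a regular sequence and $\mathrm{depth}_A(U_0, U_1) = 2$, which completes the proof.
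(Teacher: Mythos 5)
Your proposal is correct and follows essentially the same route as the paper: reduce via Proposition \ref{pqTF} and the graded decomposition of $H^0_\mm(B)$ to Theorem \ref{prop-cas2} (in its $d_1=1$ form) and Theorem \ref{easycase}, and then verify $\mathrm{depth}_A(U_0,U_1)\geq 2$. Your explicit verification of the depth condition (a linear dependence of $U_0,U_1$ forces a common linear factor of $p_1,p_2,p_3$, hence a $\KK$-linear relation among $g_1,g_2,g_3$, contradicting $\deg\Cc\geq 2$) is exactly the content the paper leaves implicit when it appeals to the minimality of the degree of $p$.
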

\begin{proof} According to Theorem \ref{prop-cas2} and Theorem \ref{easycase}, we only have to prove that $\mathrm{depth}_A(U_0,U_1)\geq 2$. This inequality is a direct consequence of the facts that $\deg(\Cc)\geq 2$ and that $p$ is by definition a syzygy  of minimal degree of $g_1,g_2,g_3$.
\end{proof}

\subsection{The case $\mu=2$}  From Section \ref{inform}, we have the following list of inertia forms of $p$ and $q$:
\begin{equation}\label{listTFcas2}
\begin{cases}
 \Res(p,q) & \text{ for } \nu=0, \ \text{ see Theorem \ref{prop-cas3}, } \\ 
 \Delta_{\alpha_1,\ldots,\alpha_{\delta-\nu}}(p,q), |\alpha_i|=\delta-\nu &  \text{ for } 1\leq \nu \leq \delta-2, \ \text{ see Theorem \ref{prop-cas2}}, \\
 \sylv_{\alpha}(p,q), |\alpha|=\delta-\nu &  \text{ for } \delta-1 \leq \nu=\delta, \ \text{ see Theorem \ref{easycase}}.  
\end{cases}
\end{equation}

Under suitable conditions, this collection of inertia forms gives a system of generators of the moving curve ideal of $\phi$. 
The following proposition is an extension to \cite[Proposition 4.2]{HSV} and \cite[Proposition 4.4]{HSV} which deal with the cases $d=4$ and $d=5$ respectively.

\begin{proposition}[$\mu=2$] If $\deg(\phi)=1$ and $d=4$ then $p$, $q$ and the collection of inertia forms \eqref{listTFcas2} form a system of generators of the moving curve ideal of $\phi$. Moreover, the same result holds if 
$\deg(\phi)=1$, $d>4$ and $V(U_0,U_1,U_2)=\emptyset \subset \PP^2_\KK$.
\end{proposition}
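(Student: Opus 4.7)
The plan is to invoke Proposition~\ref{pqTF} to identify the moving curve ideal modulo $(p,q)$ with $H^0_\mm(B)$, and then to apply the three structure theorems of Section~\ref{inform} to cover each graded piece. For $\mu=2$ and $\delta = d-2$, these theorems partition the range $0 \le \nu \le \delta$ exactly as in the list \eqref{listTFcas2}: Theorem~\ref{prop-cas3} handles $\nu = 0$, Theorem~\ref{prop-cas2} handles $1 \le \nu \le d-4$ (empty if $d=4$), and Theorem~\ref{easycase} handles $d-3 \le \nu \le \delta$. The third range carries no depth hypothesis, so it is immediate; what remains is to verify the depth conditions of the first two theorems under the stated assumptions.

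For $\nu = 0$ one needs $\mathrm{depth}_A \Det_\delta(\fb) \ge 2$, where $\Det_\delta(\fb)$ is the ideal $(\SRes_0(p,q), \ldots, \SRes_\delta(p,q))$ generated by the subresultants. Since $A = \KK[T_1, T_2, T_3]$ is Cohen--Macaulay, this reduces to showing that the affine variety $V(\SRes_0, \ldots, \SRes_\delta) \subset \mathbb{A}^3_\KK$ has codimension at least $2$. At a non-zero $T$, the rank of the matrix of $\fb$ drops below $\delta$ precisely when $\deg \gcd(p|_T, q|_T) \ge 2$, and by the $\mu$-basis property this is equivalent to the scheme-theoretic fiber of $\phi$ over the projective class of $T$ having length at least $2$. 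Since $\deg(\phi) = 1$, such projective classes $T$ are confined to the singular locus of $\Cc$, a finite subset of $\PP^2_\KK$. Adding the origin of $\mathbb{A}^3_\KK$, one obtains that $V(\SRes_0,\ldots,\SRes_\delta)$ has dimension at most $1$, giving the required depth $\ge 2$; strictness of the inclusion $V(\SRes_0,\ldots,\SRes_\delta) \subsetneq V(\Res(p,q))$ is ensured by any smooth point of $\Cc$, which exists since $\Cc$ is irreducible of degree $\ge 2$.

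For $1 \le \nu \le d - 4$ (relevant only when $d > 4$), Theorem~\ref{prop-cas2} requires $\mathrm{depth}_A(U_0, U_1, U_2) \ge 3$. The hypothesis $V(U_0, U_1, U_2) = \emptyset$ in $\PP^2_\KK$ says that the three linear forms $U_0, U_1, U_2 \in A_1$ have no common projective zero, hence are $\KK$-linearly independent in a $3$-dimensional space, so they generate the irrelevant ideal $(T_1, T_2, T_3)$ of $A$. This immediately yields $\mathrm{depth}_A(U_0, U_1, U_2) = 3$, as required.

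The main obstacle is the geometric translation in the $\nu = 0$ case: going from the vanishing of every subresultant at a point $T$ to the statement that the fiber of $\phi$ over the projective class of $T$ has length $\ge 2$, and then invoking $\deg(\phi) = 1$ to confine this locus to the singular points of $\Cc$. Once both depth conditions are established, the conclusion follows simply by concatenating the generators provided by Theorems~\ref{prop-cas3}, \ref{prop-cas2}, and \ref{easycase} with the elements $p$ and $q$ themselves.
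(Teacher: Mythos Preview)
Your proof is correct and, for the ranges $1 \le \nu \le d-4$ and $d-3 \le \nu \le d-2$, coincides with the paper's argument. The only substantive difference is in verifying the depth condition \eqref{prof2-cas3} at $\nu = 0$.

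The paper argues purely algebraically: identity \eqref{Res=SRes} gives $\Res(p,q) \in (\SRes_0(p,q),\ldots,\SRes_\delta(p,q))$, and since $\deg(\phi)=1$ forces $\Res(p,q)$ to be irreducible of degree $d$, any height-one prime containing the subresultant ideal would have to equal $(\Res(p,q))$; but $\deg\SRes_i(p,q)=d-2<d$, a contradiction. Your route is geometric: you read the simultaneous vanishing of the $\SRes_i$ at $T$ as $\deg\gcd(p|_T,q|_T)\ge 2$, interpret this via $V(p,q)=\Gamma_\phi\subset\PP^1\times\PP^2$ as the fiber $\phi^{-1}(T)$ having length $\ge 2$, and then use that a birational $\phi$ is the normalization of $\Cc$ (hence an isomorphism over the smooth locus) to confine such $T$ to the finite set $\mathrm{Sing}(\Cc)$. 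The paper's argument is shorter and uses only material already set up in Section~\ref{inform}; your argument is a little heavier (it implicitly uses that bi-$\mathrm{Proj}$ of $C/(p,q)$ coincides with the graph, which is where the ``$\mu$-basis property'' you invoke really enters) but it yields the additional information that $V(\SRes_0,\ldots,\SRes_\delta)$ is set-theoretically the cone over $\mathrm{Sing}(\Cc)$---a fact that resurfaces later in Section~\ref{adj} when the $\SRes_i$ are shown to define adjoint curves. Your remark on the strict inclusion $V(\SRes_i)\subsetneq V(\Res(p,q))$ is not actually needed for the depth bound, but it does no harm.
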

\begin{proof} If $\deg(\phi)=1$ then Equation \eqref{Res=SRes} implies that $$\mathrm{depth}(\SRes_{0}(p,q),\ldots,\SRes_{\delta}(p,q))\geq 2.$$
Indeed, $\Res(p,q)$ is an implicit equation of the curve $\Cc$ and it is irreducible. Therefore, $\Res(p,q)$ is a generator of $H^0_\mm(B)_0$ by Theorem \ref{prop-cas3}. 

Now, since $V(U_0,U_1,U_2)=\emptyset$ we deduce  that  $\mathrm{depth}_{\KK[\ud{T}]}(U_0,U_1,U_2) \geq 3$ and hence, from Theorem \ref{prop-cas2}, that the collection of inertia forms $\Delta_{\alpha_1,\ldots,\alpha_{\delta-\nu}}(p,q)$, $|\alpha_i|=\delta-\nu$ is a system of generators of $H^0_\mm(B)_\nu$ for all $1\leq \nu\leq \delta-2$. Finally, Theorem \ref{easycase} shows that $\sylv_{\alpha}(p,q), |\alpha|=\delta-\nu$, is a system of generators of $H^0_\mm(B)_\nu$ for $\nu=\delta-1$ and $\nu=\delta$. 
\end{proof}

Two comments are in order here. First, the hypothesis $V(U_0,U_1,U_2)=\emptyset$, implicitly assumed in \cite[Proposition 4.9]{HSV}, is not superfluous since otherwise there exist some counterexamples. Also, we  mention that  this latter condition corresponds to the geometric property that there is no singular point on the curve $\Cc$ of multiplicity $d-2$, the maximum possible value for a singular point on $\Cc$ in this case by \cite[Theorem 3]{SCG07}. 

Secondly, we showed that if $\deg(\phi)=1$, i.e.~$\phi$ is birational onto $\Cc$, then the greatest common divisor of the subresultants $\SRes_i(p,q)$, $i=0,\ldots,d-2$ is a non-zero constant. We can actually prove along the same line in \cite{BuDa04} that this is an equivalence. Moreover, in this case the inertia forms $X_1\SRes_i(p,q)-X_2\SRes_{i+1}(p,q)$, $i=0,\ldots,d-3$, yield rational maps from $\PP^2$ to $\PP^1$ that all induce the inverse of the parametrization $\phi$. 

\subsection{The case $\mu\geq 3$} From Section \ref{inform}, we have the following list of inertia forms of $p$ and $q$:
\begin{equation}\label{listTFcas3}
\begin{cases}
 D_\beta(p,q), |\beta|=\nu & \text{ for } 0 \leq \nu \leq \mu-2, \ \text{ Theorem \ref{prop-cas3}, } \\ 
 \Delta_{\alpha_1,\ldots,\alpha_{m}}(p,q), |\alpha_i|=\delta-\nu &  \text{ for } \mu-1\leq \nu \leq d-\mu-2, \ \text{ Theorem \ref{prop-cas2}}, \\
 \sylv_{\alpha}(p,q), |\alpha|=\delta-\nu &  \text{ for } d-\mu-1 \leq \nu=\delta, \ \text{ Theorem \ref{easycase}}.  
\end{cases}
\end{equation}

By Theorem \ref{easycase}, we know that the inertia forms $\sylv_{\alpha}(p,q), |\alpha|=\delta-\nu$, form a system of generators for $H^0_\mm(B)_\nu$ for $d-\mu-1 \leq \nu=\delta$. Also, by Theorem \ref{prop-cas3}, the collection of inertia forms $D_\beta(p,q)$, $|\beta|=\nu$, form a system of generators of $H^0_\mm(B)_\nu$ for all $\nu \leq \mu-2$ provided that 
\begin{equation}\label{depth-mu=3}
\mathrm{depth}_{A} \left( C_{d-2\mu} \oplus A \xrightarrow{( p \ q )} C_{d-\mu} \right)\geq \mu
\end{equation}
(in particular $\phi$ has to be birational onto $\Cc$). 
The latter inequality can only be satisfied if $\mu=3$ since $\mathrm{depth}_A(T_1,T_2,T_3)=3$. By Theorem \ref{prop-cas2}, the collection of inertia forms $\Delta_{\alpha_1,\ldots,\alpha_{m}}(p,q)$, with $m=\delta-\nu-\mu+2$ and  $|\alpha_i|=\delta-\nu$, is a system of generators for $H^0_\mm(B)_\nu$ for $\mu-1 \leq \nu\leq d-\mu-2$ if the inequality $\mathrm{depth}_A(U_0,\ldots,U_\mu)\geq \mu+1\geq 4$ holds. But such an inequality never holds in $A$. 

\subsection{Inertia forms of degree $1$.} We finally gather the results concerning the inertia forms of degree 1 that we will need in the next section.

If $\mu=1$ we have seen that $H^0_\mm(B)_1$ is isomorphic to $A$ and hence generated by a unique determinant, or equivalently by an iterated Sylvester form. 

If $\mu\geq 2$ the inertia forms of degree 1 that we have described are always built from subresultants. More precisely, we proved that for all $i=0,\ldots,\delta-1=d-3$, the polynomials 
$$X_1\SRes_i(p,q)-X_2\SRes_{i+1}(p,q)$$
are inertia forms of degree 1. Moreover, they generate $H^0_\mm(B)_1$
\begin{itemize}
\item if $\mu=2$ and $d=4$ (see Equations \eqref{nu=1:d=2}),
\item if $\mu=2$, $d\geq 5$ and $V(U_0,U_1,U_2)=\emptyset \subset \PP^2$,
\item  if $\mu\geq 3$ and 
\begin{equation*}
\mathrm{depth}_{A}\Det_{d-4}
\underbrace{\left(\begin{array}{ccc}
U_0 &  & 0 \\
 & \ddots &  \\
\vdots & &  U_0 \\
  & & \\ 
U_{\mu} & & \vdots \\
 & \ddots & \\
0 & & U_{\mu}
\end{array}\right.
}_{d-\mu-2}
\underbrace{\left.\begin{array}{ccc}
V_0 &  & 0 \\
 & \ddots &  \\
\vdots & &  V_0 \\
  & & \\ 
V_{d-\mu} & & \vdots \\
 & \ddots & \\
0 & & V_{d-\mu}
\end{array}\right)
}_{\mu-2}
 \geq 3,
\end{equation*}
(notice that the above matrix has $d-2$ rows).
\end{itemize}

As we will mention in the next section, there are examples that show that such conditions are necessary.

\section{Adjoint pencils}\label{adj}

As in Section \ref{eqRees}, suppose given the parametrization \eqref{phi}
\begin{eqnarray*}
 \PP_\KK^1 & \xrightarrow{\phi} & \PP_\KK^2 \\
(X_1:X_2) & \mapsto & (g_1:g_2:g_3)(X_1,X_2)
\end{eqnarray*}
and assume that the greatest common divisor of $g_1,g_2,g_3$ over $\KK[X_1,X_2]$ is a non-zero constant in $\KK$. Moreover, we will also assume hereafter that $\phi$ is birational\footnote{As a consequence of Lur\"oth's Theorem, any rational curve can be properly re-parametrized, so this condition is not restrictive.} onto its image, that is to say the curve $\Cc$, and that 
$\KK$ is an algebraically closed field. 
We recall that 
$p(\ud{X},\ud{T}),$ $q(\ud{X},\ud{T})$ denote a $\mu$-basis of the parametrization $\phi$, where $p$, resp.~$q$, has degree $\mu$, resp.~$d-\mu$, in the variables $X_1,X_2$ and $1\leq \mu\leq d-\mu$. 

Since $\Cc$ is a rational plane curve, it is well-known that the genus of $\Cc$ is zero, that is to say that 
\begin{equation}\label{genus}
\frac{(d-1)(d-2)}{2}=\sum_{\pp \in \mathrm{Sing}(\Cc)}\frac{m_\pp(m_\pp-1)}{2}
\end{equation}
where the sum is over all the singular points, proper as well as infinitely near, of $\Cc$ and  $m_\pp$ denotes the multiplicity of $\Cc$ at $\pp$. Notice that to distinguish the infinitely near singularities of $\Cc$, we call a \emph{proper} singularity of $\Cc$ a usual singular point of $\Cc$ in the $(T_1:T_2:T_3)$-projective plane.

\begin{definition}\label{def:adjoint} An algebraic curve $\Dc$ is said to be \emph{adjoint} to $\Cc$ if $\Dc$ is going with virtual multiplicity $m_\pp-1$ through all the singular points, proper as well as infinitely near, of $\Cc$ of multiplicity $m_\pp$. 
\end{definition}
The notions of \emph{virtual multiplicity} and \emph{virtually going through} are quite subtle and essential to formulate a correct and useful inductive definition of adjoint curves. However, since we will not handle these notions in the sequel, we will not go further into the details and refer the reader to \cite[Sections 4.1 and 4.8]{Casas}. We just mention that if $\pp$ is a proper singular point of $\Cc$ of multiplicity $m_\pp$, then a curve $\Dc$ goes through $\pp$ with virtual multiplicity $m_\pp-1$ if it has multiplicity at least $m_\pp-1$ at $\pp$. Therefore, it is clear what is an adjoint to a curve having no  infinitely near singularity.

\medskip

The curve $\Cc$ being rational, it can be shown that curves adjoint to $\Cc$ of degree $\leq d-3$ do not exist, whereas curves adjoint to $\Cc$ of degree $\geq d-2$ are guaranteed to exist. Of course, the character of curves adjoint to $\Cc$ of degree $d-2$ and $d-1$ is particularly interesting.

An \emph{adjoint pencil} on $\Cc$ of degree $m$ is a one-parameter family of curves adjoint to $\Cc$ of degree $m$. It is hence of the form $X_1D_1(\ud{T})+X_2D_2(\ud{T})$ where $D_1,D_2$ are homogeneous polynomials in $\KK[\ud{T}]$ of degree $m$. Recently, David Cox noticed that moving curves of $\phi$ following $\Cc$ of degree $d-2$ (resp.~$d-1$) that are linear in $X_1,X_2$ sometimes give adjoint pencils on $\Cc$ (we refer the reader to  \cite[Conjecture 3.8 and Remark 3.9]{Cox07} for precise statements). 
Denote by $\Lc_{d-2}(\phi)$ (resp.~$\Lc_{d-1}(\phi)$) the finite $\KK$-submodule of the moving curve ideal of $\phi$ consisting of moving curves of degree $d-2$ (resp.~$d-1$) that are linear in $X_1,X_2$.
In what follows, using the results of Section \ref{eqRees} we determine explicit moving curves in $\Lc_{d-2}(\phi)$ and  $\Lc_{d-1}(\phi)$ that give adjoint pencils on $\Cc$. Point out that a similar study could be done for moving curves of degree $2,3,\mathrm{etc}$ in $X_1,X_2$ using the same approach. However, we will stick to the case of moving curves linear in $X_1,X_2$ because of its geometric content.

\medskip

According to the notation of Section \ref{eqRees}, the study of $\Lc_{d-2}(\phi)$ and $\Lc_{d-1}(\phi)$ relies on the study of the elements in $H^0_\mm(B)_1$, where $A=\KK[\ud{T}]$, $B=A[X_1,X_2]/(p,q)$ and $\mm=(X_1,X_2)$, that are homogeneous of degree $d-2$ or $d-1$ in the variables $T_1,T_2,T_3$. In the rest of the paper, we will always assume
that $d\geq 3$; this is not restrictive because a rational curve of degree $\leq 2$ has no singular point.  Finally, observe that since we are assuming that $\phi$ is birational onto $\Cc$, the inertia forms of degree 0 of $p,q$ with respect to the ideal $(X_1,X_2)$ are generated by $\Res(p,q) \in \KK[\ud{T}]$ which is an irreducible and homogeneous polynomial of degree $d$. Therefore, the inertia forms of degree 0 of $p,q$ w.r.t.~$X_1,X_2$ do not contribute to $\Lc_{d-2}(\phi)$ or $\Lc_{d-1}(\phi)$. 

We begin with the simple case $\mu=1$ before turning to the case $\mu \geq 2$ for which we will need to give another characterization of adjoint curves.

\subsection{The case $\mu=1$} The $\mu$-basis associated to the parametrization $\phi$ of $\Cc$ is of the form 
\begin{align*}
 p(\underline{X},\underline{T})&= \sum_{i=1}^3 p_i(\underline{X})T_i = U_0(\underline{T})X_1+U_1(\underline{T})X_2,\\
q(\underline{X},\underline{T})&= \sum_{i=1}^3 q_i(\underline{X})T_i = V_0(\underline{T})X_1^{d-1}+V_1(\underline{T})X_1^{d-2}X_2+\cdots+V_{d-1}(\underline{T})X_2^{d-1}.
\end{align*}

\begin{lemma}[$\mu=1, d\geq 3$]\label{lem:mu=1} The curve $\Cc$ has a unique (proper) singular point $\pp$ (of multiplicity $d-1$). Moreover, $U_0(\pp)=U_1(\pp)=0$.
\end{lemma}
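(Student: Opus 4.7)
The key observation is that $p(\ud{X},\ud{T}) = X_1 U_0(\ud{T}) + X_2 U_1(\ud{T})$ should be read as a pencil of lines in $\PP^2$: for each $(x_1:x_2) \in \PP^1$ the equation $x_1 U_0 + x_2 U_1 = 0$ defines a line in $\PP^2$, and the syzygy identity
$$p_1 g_1 + p_2 g_2 + p_3 g_3 = 0 \in \KK[X_1,X_2]$$
says precisely that this line passes through $\phi(x_1:x_2)$. Let $\pp \in \PP^2$ be the base point of this pencil, i.e.\ the common zero locus $V(U_0)\cap V(U_1)$ (a single point provided $U_0,U_1$ are $\KK$-linearly independent, which I check as follows: a relation $U_0 = \lambda U_1$ would make $p$ a product $(\lambda X_1 + X_2) U_1(\ud{T})$, forcing $(U_1(g_1),U_1(g_2),U_1(g_3))$ to be a nonzero linear syzygy of a single polynomial, so the $g_i$'s would satisfy a nontrivial linear relation and $\Cc$ would have degree~$1$, contradicting $d\geq 3$).

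Next I would consider the projection from $\pp$, namely the rational map $\pi : \PP^2 \dashrightarrow \PP^1$, $\ud{T} \mapsto (U_0(\ud{T}):U_1(\ud{T}))$, and compose it with $\phi$. From the identity $X_1 U_0(\phi(\ud{X})) + X_2 U_1(\phi(\ud{X})) = 0$ we get
$$\pi\circ\phi:(X_1:X_2) \mapsto (U_0(\phi(X_1,X_2)):U_1(\phi(X_1,X_2))) = (-X_2:X_1),$$
which is an isomorphism $\PP^1 \xrightarrow{\sim} \PP^1$. Since $\phi:\PP^1 \to \Cc$ is birational by hypothesis, the restriction $\pi|_\Cc : \Cc \dashrightarrow \PP^1$ must also have degree $1$. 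But projecting $\Cc$ from a point $\pp \in \PP^2$ has degree $d - m_\pp(\Cc)$ (with the convention $m_\pp(\Cc)=0$ when $\pp\notin\Cc$). Equating $d - m_\pp = 1$ yields $m_\pp(\Cc) = d-1$; in particular $\pp \in \Cc$ and $\pp$ is a (proper) singular point of $\Cc$ of multiplicity exactly $d-1$.

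Finally, I would invoke the genus formula~\eqref{genus}. The contribution of $\pp$ alone to the right-hand side is $\binom{d-1}{2} = (d-1)(d-2)/2$, which already equals the left-hand side. Since every term $m_\qq(m_\qq-1)/2$ is nonnegative and strictly positive whenever $m_\qq\geq 2$, there can be no further singular point of $\Cc$, whether proper or infinitely near. Hence $\pp$ is the unique singular point of $\Cc$ and has multiplicity $d-1$; the identities $U_0(\pp)=U_1(\pp)=0$ hold by the very definition of $\pp$ as the base point of the pencil. The only non-routine step is the degree-computation $\deg(\pi|_\Cc) = d - m_\pp$; everything else is either linear algebra or a direct application of the genus formula.
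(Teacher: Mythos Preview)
Your argument is correct and takes a genuinely different route from the paper. The paper first uses the genus formula to guarantee the existence of \emph{some} singular point, moves it to $(0:0:1)$, and then argues by contradiction (via a resultant computation and a change of $\mu$-basis) that $p_3=0$, which in turn yields $U_0(\pp)=U_1(\pp)=0$ and the multiplicity bound $m_\pp\geq d-1$. You invert the logic: you \emph{define} $\pp$ as the base point of the pencil of lines $\{x_1U_0+x_2U_1=0\}$ and read the syzygy identity as saying that $\pi\circ\phi$ is an isomorphism, where $\pi$ is the projection from $\pp$; the standard fact $\deg(\pi|_\Cc)=d-m_\pp$ then gives $m_\pp=d-1$ directly. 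Your approach is more geometric and avoids the resultant inspection and the contradiction argument entirely; the paper's approach is more self-contained in that it does not appeal to the projection-degree formula. One small remark: your sentence about ``$(U_1(g_1),U_1(g_2),U_1(g_3))$ being a nonzero linear syzygy of a single polynomial'' is garbled---what you want is simply that $p=(\lambda X_1+X_2)U_1$ forces $U_1(g_1,g_2,g_3)=0$, i.e.\ a nontrivial $\KK$-linear relation among $g_1,g_2,g_3$; this already contradicts $\mu=1$ (a degree-$0$ syzygy would exist), so you need not even invoke $\deg(\Cc)$.
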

\begin{proof}
Since $d\geq 3$, Equation \eqref{genus} implies that there exists at least one singular point on $\Cc$. By a suitable linear change of coordinates one may assume that this point is at the origin: $\pp=(0:0:1)$. 
Then, we claim that $p_3=0$ in \eqref{pq}. If this is true, clearly $U_0(\pp)=U_1(\pp)=0$ and we will have
$$C(T_1,T_2,T_3)=\Res(p,q)=\Res(p_1T_1+p_2T_2,q)$$
that shows that $\ord_\pp C(T_1,T_2,T_3)\geq d-1$, i.e.~$\pp$ is a singular point of multiplicity $ \geq d-1$. Then, it will follow by \eqref{genus} that $\pp$ has multiplicity exactly $d-1$ and that it is the unique singular point of $\Cc$.

To prove that $p_3=0$ we proceed by contradiction and assume that $p_3\neq 0$. Since $C(\pp)=0$, we deduce that $\Res(p_3,q_3)=0$ and hence that $p_3$ divides $q_3$. Therefore, by a change of $\mu$-basis if necessary, we can assume that $q_3=0$. But then, by inspecting the Sylvester matrix of $p$ and $q$ we have
$$ C(T_1,T_2,1)=\Res(p,q)=\Res(p_3,q_1T_1+q_2T_2)+R(T_1,T_2)$$
where $\ord_\pp R(T_1,T_2) \geq 2$. Since $\pp$ is a singular point, the term $\Res(p_3,q_1T_1+q_2T_2)$ must vanish, that is $p_3$ must divide $q=q_1T_1+q_2T_2$, a contradiction with the fact that the couple $(p,q)$ is a $\mu$-basis of $\phi$.
\end{proof}

In Section \ref{Mnu} we defined  the matrix 
$$
M_1(p,q)=\underbrace{\left(\begin{array}{ccc}
U_0 &  & 0 \\
 & \ddots &  \\
U_1 & &  U_0 \\
  & \ddots & \\ 
0 & & U_1 \\
\end{array}\right.
}_{d-3}
\underbrace{\left.\begin{array}{c}
\vdots \\
 \\
q_\beta(X) \\
\\
\vdots \\
\end{array}\right)}_{1}
$$
where $\sum_{|\beta|=d-3}q_\beta(\ud{X})Y^\beta=\morl_{d-3,1}(p,q)$, and we proved that 
$$\det(M_1(p,q)) \in \Lc_{d-1}(\phi).$$
The two following propositions recover \cite[Theorem 3.2]{HW} -- see also Proposition \ref{cas2-d=1}.

\begin{proposition}[$\mu=1, d=3$]  The element 
$p$ is a $\KK$-basis of $\Lc_{1}(\phi)$ and gives an adjoint pencil on  $\Cc$. Moreover, any element in $\Lc_{2}(\phi)$, which is $\KK$-generated by $T_1p, T_2p, T_3p$ and $\sylv_{(0,0)}(p,q)$, gives an adjoint pencil on $\Cc$.
\end{proposition}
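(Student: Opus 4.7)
The plan is to handle the two parts of the proposition in parallel, combining the explicit generators of the moving curve ideal provided by Proposition \ref{cas2-d=1} with the geometric information of Lemma \ref{lem:mu=1}. Specialising to $\mu = 1$ and $d = 3$, we have $\delta = 1$, so by Proposition \ref{cas2-d=1} and Theorem \ref{easycase} the graded pieces $H^0_\mm(B)_0$ and $H^0_\mm(B)_1$ are free $A$-modules of rank one generated by $h_0 = \pm \Res(p, q)$ and $h_1 = \sylv_{(0,0)}(p, q)$ respectively. Together with Proposition \ref{pqTF}, this shows that the moving curve ideal of $\phi$ is generated as an ideal in $\KK[\ud{X}, \ud{T}]$ by the four elements $p$, $q$, $\sylv_{(0,0)}(p, q)$ and $\Res(p, q)$, of respective bidegrees (in $\ud{X}$-degree and $\ud{T}$-degree) $(1, 1)$, $(2, 1)$, $(1, 2)$ and $(0, 3)$.

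I would then extract $\Lc_{1}(\phi)$ and $\Lc_{2}(\phi)$ by bidegree inspection: multiplying any of these four generators by a monomial in $\KK[\ud{X}, \ud{T}]$ can only raise bidegrees, so the bidegree $(1, 1)$ part of the ideal reduces to $\KK p$ (since $q$, $\sylv_{(0,0)}(p,q)$ and $\Res(p,q)$ are all ruled out by their degrees), yielding $\Lc_{1}(\phi) = \KK p$. Similarly, the bidegree $(1, 2)$ part can only come from $\KK[\ud{T}]_1 \cdot p$ together with $\KK \cdot \sylv_{(0,0)}(p, q)$, yielding $\Lc_{2}(\phi) = \KK\langle T_1 p, T_2 p, T_3 p, \sylv_{(0,0)}(p, q) \rangle$.

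For the adjoint property, by Lemma \ref{lem:mu=1} together with the genus formula \eqref{genus}, $\Cc$ has a unique proper singular point $\pp$ of multiplicity $d - 1 = 2$ and no infinitely near singularities, and $U_0(\pp) = U_1(\pp) = 0$. An adjoint curve to $\Cc$ of degree $\leq 2$ is therefore required only to pass through $\pp$. The pencil associated to $p$ consists of the lines $x_1 U_0(\ud{T}) + x_2 U_1(\ud{T}) = 0$, each of which contains $\pp$ by the vanishing of $U_0$ and $U_1$ at $\pp$. For a general $F = L(\ud{T}) p + \beta \sylv_{(0,0)}(p, q) \in \Lc_{2}(\phi)$, I would use the decompositions $p = X_1 U_0 + X_2 U_1$ and $q = X_1 (V_0 X_1 + V_1 X_2) + X_2(V_2 X_2)$ to compute directly $\sylv_{(0,0)}(p, q) = -U_1 V_0 X_1 + (U_0 V_2 - U_1 V_1) X_2$; substituting $\ud{T} = \pp$ makes every coefficient vanish, so $F(\ud{X}, \pp) = 0$ identically and every conic in the pencil passes through $\pp$. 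The main technical point is this determinantal computation, together with the observation that a change of decomposition modifies $\sylv_{(0,0)}(p, q)$ by a $\KK[\ud{T}]_1$-multiple of $p$, which also vanishes at $\ud{T} = \pp$ since $p(\ud{X}, \pp) = 0$; this ensures the vanishing of $F(\ud{X}, \pp)$ is independent of the choice of representative.
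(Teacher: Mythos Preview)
Your proof is correct and follows essentially the same approach as the paper: identify $\Lc_{1}(\phi)$ and $\Lc_{2}(\phi)$ from the case $\mu=1$ generators, invoke Lemma~\ref{lem:mu=1} for the unique singular point $\pp$ with $U_0(\pp)=U_1(\pp)=0$, and check that the generators vanish at $\pp$. The only difference is cosmetic: the paper observes in one line that $\sylv_{(0,0)}(p,q)\in(U_0,U_1)$ ``by construction'' (the first row of the defining $2\times 2$ determinant is necessarily $(U_0,U_1)$, with no choice involved since $p$ has $X$-degree~$1$), whereas you compute a specific representative and then argue separately that the result is independent of the decomposition---correct, but slightly more work than needed.
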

\begin{proof}
From Section \ref{sec:u=1}, we know that  $\Lc_{1}(\phi) =\langle p \rangle_\KK$ and that 
$$\Lc_{2}(\phi)=\langle T_1p, T_2p, T_3p, \sylv_{(0,0)}(p,q) \rangle_\KK.$$
By Lemma \ref{lem:mu=1}, $\pp$ is the unique singular point of $\Cc$, it has multiplicity $2$ and $U_0(\pp)=U_1(\pp)=0$. Moreover, $\sylv_{(0,0)}(p,q) \in (U_0,U_1)$ by construction
and hence it also vanishes at $\pp$.
\end{proof}

\begin{proposition}[$\mu=1, d>3$]
The element $\det(M_1(p,q)) \in \Lc_{d-1}(\phi)$ gives an adjoint pencil on $\Cc$.
\end{proposition}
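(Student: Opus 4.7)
The plan is to show that, writing $\det(M_1(p,q))=X_1D_1(\ud{T})+X_2D_2(\ud{T})$, one has $\ord_\pp D_1\geq d-2$ and $\ord_\pp D_2\geq d-2$. By Lemma \ref{lem:mu=1}, $\pp$ is the only singular point of $\Cc$ and it has multiplicity $d-1$, which already saturates the genus formula \eqref{genus}; in particular $\Cc$ has no infinitely near singularity, so the adjoint condition for a curve of degree $d-1$ is exactly that it has multiplicity $\geq d-2$ at $\pp$. The above vanishing will therefore imply that every member of the pencil $sD_1+tD_2=0$ is adjoint to $\Cc$.

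The heart of the argument is a Laplace expansion of $\det(M_1(p,q))$ along its last column. The left $(d-2)\times(d-3)$ block of $M_1(p,q)$ is the matrix of multiplication by $p=U_0X_1+U_1X_2$ from $C_{d-4}$ to $C_{d-3}$ in the monomial bases, and its bidiagonal staircase shape leaves, for each $0\leq i\leq d-3$, a unique non-zero contribution in the $(d-3)\times(d-3)$ minor obtained by deleting the $i$-th row: it equals $\pm U_0^iU_1^{d-3-i}$. Since Lemma \ref{lem:mu=1} gives $U_0(\pp)=U_1(\pp)=0$, every such minor has $\ord_\pp\geq d-3$.

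The second step exploits the linear-in-$\ud X$ structure of $p$ specific to the case $\mu=1$: from $p(X_1,X_2)-p(Y_1,Y_2)=U_0(X_1-Y_1)+U_1(X_2-Y_2)$ one can take $h_{1,1}=U_0$ and $h_{1,2}=U_1$ in a Morley decomposition \eqref{morley-decomp}, which yields
$$\det(h_{i,j})_{i,j=1,2}=U_0\,h_{2,2}-U_1\,h_{2,1}\in(U_0,U_1)\cdot A[X_1,X_2,Y_1,Y_2].$$
Extracting the coefficient of each $\ud{Y}^\beta$ shows that $q_\beta(\ud X)\in(U_0,U_1)\cdot A[X_1,X_2]$, hence the $X_1$- and $X_2$-coefficients of the entries appearing in the last column of $M_1(p,q)$, viewed as elements of $A=\KK[\ud T]$, all belong to $(U_0,U_1)$ and so have $\ord_\pp\geq 1$.

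Combining the two steps in the Laplace expansion, each contribution to $D_1$ (respectively $D_2$) is a product of a minor with $\ord_\pp\geq d-3$ and a scalar with $\ord_\pp\geq 1$, so $\ord_\pp D_1,\,\ord_\pp D_2\geq d-2$ as needed. The only non-routine step is the identification $q_\beta\in(U_0,U_1)$, which really depends on the special shape of $p$ available only in the case $\mu=1$; the rest is routine bookkeeping on the explicit bidiagonal shape of $M_1(p,q)$.
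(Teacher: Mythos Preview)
Your argument is correct and follows essentially the same route as the paper: both proofs observe that the last column of $M_1(p,q)$ lies in $(U_0,U_1)$ (you justify this explicitly via the Morley decomposition with $h_{1,1}=U_0$, $h_{1,2}=U_1$), combine this with the bidiagonal shape of the left block to obtain $\det(M_1(p,q))\in(U_0,U_1)^{d-2}$, and then invoke Lemma~\ref{lem:mu=1}. The one point you omit that the paper includes is the check that $\det(M_1(p,q))\neq 0$, which follows from its being a generator of $H^0_\mm(B)_1$; without this, the ``pencil'' could in principle be trivial.
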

\begin{proof}  By construction $q_\beta \in (U_0,U_1)$. Therefore, from the definition of $M_1(p,q)$ we deduce that 
$$ \det(M_1(p,q)) \in (U_0,U_1)^{d-2}.$$
Now, if $\pp$ be is a singular point of $\Cc$, then by Lemma \ref{lem:mu=1} $\pp$ is unique with multiplicity $d-1$ and $U_0(\pp)=U_1(\pp)=0$. Therefore, $\det(M_1(p,q))$ vanishes at $\pp$ with multiplicity at least $d-2$. Notice that $\det(M_1(p,q))\neq 0$ for it is a generator of $H^0_\mm(B)_1$ which can not be zero since $\Res(p,q) \in H^0_\mm(B)_0$ is non-zero.
\end{proof}

In general, if $d>3$ and $\mu=1$ an element of $\Lc_{d-2}(\phi)\oplus_\KK \Lc_{d-1}(\phi)$ is not an adjoint pencil on $\Cc$, but one can always find one, namely $\det(M_1(p,q))$. Indeed, from Section \ref{sec:u=1} we have 
$$\Lc_{d-2}=\langle \left( \ud{T}^\alpha p\right)_{|\alpha|=d-3}  \rangle_\KK \ \textrm{ and } \ 
\Lc_{d-1}=\langle \left( \ud{T}^\alpha p\right)_{|\alpha|=d-2},\det(M_1(p,q)) \rangle_\KK.$$
So the element $p$ of the $\mu$-basis has degree $\mu=1$ and hence contribute to $\Lc_{d-2}(\phi)\oplus_\KK \Lc_{d-1}(\phi)$ without producing an adjoint pencil on $\Cc$ (see e.g.~\cite[Example 3.7]{Cox07}).

\subsection{Adjoint and polar curves} Instead of using directly Definition \ref{def:adjoint} to show that a certain curve $\Dc$ is adjoint to $\Cc$, we will use a property of adjoint curves that allows us to prove that $\Dc$ is adjoint to $\Cc$ by looking at the intersection of $\Cc$ and $\Dc$ at all the proper singularities. This approach has the advantage of avoiding the consideration of the infinitely near singularities of $\Cc$ through a desingularization process of $\Cc$. To state this property, we first need to fix some notation. 

Given two plane curves  $\Dc$ and $\Dc'$ that intersect in a finite set of points, we denote by $\mult_\pp(\Dc,\Dc')$ the intersection multiplicity of $\Dc$ and $\Dc'$ at the point $\pp$, and by $\mult_\pp(\Dc)$ the multiplicity of $\Dc$ at $\pp$. Recall that
$$\mult_\pp(\Dc)=\min_{\Lc \textrm{ line through } \pp} \mult_\pp(\Dc,\Lc)$$
where the minimum is taken over all the lines $\Lc$ passing through the point $\pp$, and also that $$\mult_\pp(\Dc)=\ord_\pp D(T_1,T_2,T_3)$$
where $D(T_1,T_2,T_3)$ is an implicit equation of $\Dc$. 

Given $\gamma=(\alpha(x),\beta(x))$ a branch of $\Dc$ centered at $\pp$, the intersection multiplicity of $\gamma$ and $\Dc'$ at $\pp$ is defined as
$$\mult_\pp(\gamma,\Dc')=\ord_\pp \, D'(\alpha(x),\beta(x)).$$
Then, the multiplicity of $\gamma$ at $\pp$ is
$$\mult_\pp(\gamma)=\min_{\Lc \textrm{ line through } \pp} \mult_\pp(\gamma,\Lc)$$
and $\mult_\pp(\Dc)$ is equal to the sum of the multiplicities of the branches of $\Dc$.

\begin{definition} Suppose given a curve $\Dc \subset \PP^2$ with equation $D(T_1,T_2,T_3)=0$ and a point $\qq=(\qq_1:\qq_2:\qq_3) \in \PP^2$. The \emph{polar curve} of $\Cc$ w.r.t.~$\qq$ is the curve defined by the equation
$$\qq_1\frac{\partial D}{\partial T_1}+\qq_2\frac{\partial D}{\partial T_2}+\qq_3\frac{\partial D}{\partial T_3}=0.$$
\end{definition}

\begin{proposition}[{\cite[Theorem 6.3.1]{Casas}}]\label{Dedekind}
 Let $\Cc \subset \PP^2$ be a curve, $\qq \in \PP^2$ be a point not lying on $\Cc$ and $\Pc_\qq$ be the polar curve of $\Cc$ w.r.t.~$\qq$. A curve $\Dc \subset \PP^2$ is adjoint to $\Cc$ if and only if
$$\mult_\pp(\gamma,\Dc) \geq \mult_\pp(\gamma,\Pc_\qq)-\mult_\pp(\gamma,\Lc_\qq)+1$$
for all proper singular points $\pp$ of $\Cc$ and all branches $\gamma$ of $\Cc$ centered at $\pp$, where $\Lc_\qq$ denotes the line joining the points $\pp$ and $\qq$.
\end{proposition}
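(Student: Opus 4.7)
The plan is to reduce the global adjoint condition to a local condition at each proper singular point of $\Cc$ via a conductor characterization, and then to match local conductor exponents with the right-hand side of the inequality using Dedekind's different formula.

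First, I would invoke the Noether--Gorenstein characterization of adjoints on a plane curve: $\Dc$ is adjoint to $\Cc$ in the sense of Definition \ref{def:adjoint} if and only if, for every proper singular point $\pp$ of $\Cc$ and every branch $\gamma$ of $\Cc$ centered at $\pp$, one has
$$\mult_\pp(\gamma,\Dc) \geq c_\gamma,$$
where $c_\gamma$ denotes the valuation along $\gamma$ of the conductor ideal of the normalization $\widetilde{\Cc}\to\Cc$. This eliminates the inductive machinery of virtual multiplicities at infinitely near points and replaces it by an intrinsic condition on each analytic branch of $\Cc$.

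Second, I would compute $c_\gamma$ via Dedekind's different formula. Given $\qq \notin \Cc$, pick affine coordinates in which $\qq$ is the point at infinity in one of the coordinate directions, say the $y$-direction. Then, on $\Cc$, the polar $\Pc_\qq$ agrees up to a multiple of the defining equation $f$ with $\partial f/\partial y$, by Euler's identity $\sum T_i\partial_{T_i}F=dF$; and $\Lc_\qq$ becomes the line through $\pp$ parallel to the $y$-axis. For a branch $\gamma$ parametrized locally by $(x(t),y(t))$, the classical Dedekind different formula for the one-dimensional local ring $\mathcal{O}_{\Cc,\gamma}$ yields
$$c_\gamma = \ord_t\bigl((\partial f/\partial y)(x(t),y(t))\bigr) - \ord_t\bigl(x(t)-x(0)\bigr) + 1,$$
which translates to $c_\gamma = \mult_\pp(\gamma,\Pc_\qq)-\mult_\pp(\gamma,\Lc_\qq)+1$. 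Combined with the first step, this gives the equivalence of the proposition. A separate verification that the right-hand side is independent of the choice of $\qq\notin\Cc$ is automatic once the equivalence is established, and can in any case be checked directly from Euler's identity.

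The main obstacle is the first step. For plane curves with only ordinary multiple points, this equivalence is M.~Noether's classical \emph{Restsatz}, and the whole proof is essentially elementary. For curves with arbitrary infinitely near multiple points -- which is the general situation encompassed by Definition \ref{def:adjoint} -- one needs the full strength of Gorenstein's theorem on the conductor ideal of a plane curve, which is the substantive content of this proposition and is proved in detail in Casas's monograph. Once Step 1 is granted, the Dedekind different formula of Step 2 is essentially a one-variable computation that requires no further geometric input.
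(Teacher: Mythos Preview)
The paper does not prove this proposition at all: it is stated with a citation to \cite[Theorem 6.3.1]{Casas} and used as a black box. So there is no ``paper's own proof'' to compare your proposal against.

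That said, your sketch is a faithful outline of how the result is obtained in Casas's monograph: the adjoint condition is identified with a conductor condition branch by branch (this is the Gorenstein/Noether step), and the conductor exponent along each branch is computed via the Dedekind different, which is exactly the combination $\mult_\pp(\gamma,\Pc_\qq)-\mult_\pp(\gamma,\Lc_\qq)+1$. You correctly flag that the first step is the nontrivial one and that, beyond ordinary multiple points, it requires the full Gorenstein theorem on plane curve singularities; this is precisely why the paper defers to \cite{Casas} rather than attempting a self-contained argument.
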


It should be noticed that the quantity $\mult_\pp(\gamma,\Pc_\qq)-\mult_\pp(\gamma,\Lc_\qq)$ is independent of the choice of the point $\qq \notin \Cc$. We also recall  that a birational parametrization of a plane algebraic curve gives naturally parametrizations for all the branch curves. In particular, the number of irreducible branches at a singular point $\pp$ is the number of its distinct pre-images under the parametrization (see for instance \cite[Proposition 3.7.8]{Casas}).

\subsection{The case $\mu \geq 2$}  An implicit equation of the curve $\Cc$ of degree $d$ is given by $\Res(p,q) \in \KK[\underline{T}]$, where $(p,q)$ is a $\mu$-basis. The next result shows that the first-order subresultants $\SRes_i(p,q)$, $i=0,\ldots,d-2$, define curves of degree $d-2$ that are adjoint to $\Cc$ (notice that since $2\leq \mu \leq d-\mu$, we must have $d\geq 4$). To prove this, we will need the following lemma that can be found in \cite[Lemma 5.1]{BuMo07}; we include the proof for the convenience of the reader.

\begin{lemma}\label{dREs=JSres} Suppose we are given two homogeneous polynomials 
\begin{align*}
g_1(X_1,X_2) &= a_{d_1}X_1^{d_1}+a_{d_1-1}X_1^{d_1-1}X_2+\cdots+a_{1}X_1X_2^{d_1-1}+a_0X_2^{d_1}, \\ \notag
g_2(X_1,X_2) &= b_{d_2}X_1^{d_2}+b_{d_2-1}X_1^{d_2-1}X_2+\cdots+b_{1}X_1X_2^{d_2-1}+b_0X_2^{d_2},
\end{align*}
of degree $d_1,d_2 \geq 2$, respectively, and with coefficients $a_i$'s and $b_j$'s in $R[T]$ where $R$ is a commutative ring. 
Then, we have the following equality in $R[T,x]$:
\begin{multline*}
\frac{\partial \Res(g_{1},g_{2})}{\partial T} = \\
 (-1)^{d_{1}+d_{2}}\left|\begin{array}{cc}
\frac{\partial g_1}{\partial T}(x,1) & \frac{\partial g_2}{\partial T}(x,1) \\
\frac{\partial g_1}{\partial X_1}(x,1) & \frac{\partial g_2}{\partial X_1}(x,1) \\ 
\end{array}\right|
{\SRes}_0(g_{1},g_{2}) \text{ \rm modulo } (g_{1}(x,1), g_{2}(x,1)). 
\end{multline*}
\end{lemma}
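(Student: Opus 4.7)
The plan is to prove the identity by combining the Poisson product formula for the resultant with a Lagrange-type expansion of the principal subresultant, reducing the problem to a comparison of residues at the common roots of $g_1$ and $g_2$.

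First, I would reduce to the universal case by specialization, so that all coefficients $a_i, b_j$ may be treated as indeterminates over $\ZZ$; after inverting $a_{d_1}$ and passing to an algebraic closure of the fraction field, the polynomial $g_1(x,1)$ splits as $a_{d_1}\prod_{i=1}^{d_1}(x-\alpha_i)$. Checking a congruence modulo $(g_1(x,1), g_2(x,1))$ in $R[T,x]$ then reduces, after a further reduction modulo $g_2$, to comparing values at each of the roots $\alpha_i$.

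Second, applying the Poisson formula $\Res(g_1,g_2) = a_{d_1}^{d_2}\prod_i g_2(\alpha_i,1)$, differentiating with respect to $T$, and using the implicit-function identity $\partial_T \alpha_i = -\partial_T g_1(\alpha_i,1)/\partial_{X_1}g_1(\alpha_i,1)$ coming from $g_1(\alpha_i,1) = 0$, one obtains
\[
\partial_T\Res(g_1,g_2) = -a_{d_1}^{d_2}\sum_{i=1}^{d_1}\frac{J(\alpha_i,1)}{\partial_{X_1}g_1(\alpha_i,1)}\prod_{j\neq i}g_2(\alpha_j,1) + (\text{boundary term in } \partial_T a_{d_1}),
\]
where $J$ is the Jacobian determinant from the statement. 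In parallel, a Vandermonde transformation of the $g_1$-columns of the Sylvester submatrix defining $\SRes_0$ yields the Lagrange-style identity
\[
\SRes_0(g_1,g_2) = (-1)^{d_1+d_2+1} a_{d_1}^{d_2-1}\sum_{i=1}^{d_1}\frac{\prod_{j\neq i}g_2(\alpha_j,1)}{\partial_{X_1}g_1(\alpha_i,1)},
\]
up to the indicated sign coming from the row permutations. Combining the two formulas termwise at $x=\alpha_i$ shows that $\partial_T\Res - (-1)^{d_1+d_2}J(x,1)\SRes_0$, viewed modulo $g_1(x,1)$ as a polynomial of degree $<d_1$ in $x$, vanishes at every root of $g_1$, and therefore lies in the ideal $(g_1(x,1), g_2(x,1))$ after absorbing the boundary term modulo $g_2$.

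The main obstacle will be the sign bookkeeping: tracing the factor $(-1)^{d_1+d_2}$ through the minus sign from implicit differentiation, through the row-permutation signs in the Vandermonde expansion of $\SRes_0$, and through the orientation convention for the Jacobian used in the statement. A secondary delicacy is the boundary term coming from $\partial_T a_{d_1}$, which does not participate in the residue comparison at the interior roots; this term must be shown to land in $(g_1,g_2)$ independently, using that the scalar $\prod_i g_2(\alpha_i,1) = \Res(g_1,g_2)/a_{d_1}^{d_2}$ already lies in the ideal $(g_1(x,1),g_2(x,1))$ of $R[T,x]$.
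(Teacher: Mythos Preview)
Your approach is genuinely different from the paper's and, once cleaned up, should go through; but it is considerably heavier.

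The paper never touches roots or the Poisson formula. Instead it performs the shift $X_1 \mapsto X_1 + xX_2$ in both $g_1$ and $g_2$, notes that $\Res$ and $\SRes_0$ are invariant under this shift by the base-change formula for subresultants, and then expands the Sylvester determinant of the shifted polynomials along its last \emph{two} rows. This produces, in one line,
\[
\Res(g_1,g_2) = (-1)^{d_1+d_2}\left|\begin{array}{cc} a_0^x & b_0^x \\ a_1^x & b_1^x \end{array}\right|\SRes_0(g_1,g_2) + (a_0^x)^2\Delta_1 + a_0^x b_0^x \Delta_2 + (b_0^x)^2\Delta_3,
\]
where $a_0^x = g_1(x,1)$, $b_0^x = g_2(x,1)$, $a_1^x = \partial_{X_1}g_1(x,1)$, $b_1^x = \partial_{X_1}g_2(x,1)$. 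Differentiating in $T$ and reducing modulo $(a_0^x,b_0^x)$ finishes the proof immediately, over an arbitrary commutative ring, with no localization, no splitting of roots, and no sign chase beyond the single cofactor sign $(-1)^{d_1+d_2}$.

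Your route via Poisson and residues is conceptually natural, but two points need attention. First, the Lagrange-style identity you quote for $\SRes_0$ is off: a direct check at $d_1=d_2=2$ shows the prefactor must be $a_{d_1}^{d_2}$, not $a_{d_1}^{d_2-1}$, and even with the correct exponent this formula is not standard and needs its own argument. Second, you invert $a_{d_1}$ and pass to an algebraic closure but never explain why the congruence descends back to $R[T,x]$; this amounts to showing $a_{d_1}$ is a nonzerodivisor on the universal quotient by $(g_1(x,1),g_2(x,1))$, which is true (that quotient is a domain) but should be said. The paper's determinantal argument sidesteps both issues.
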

\begin{proof} 
Consider the polynomials
\begin{align*} 
g_1(X_1+xX_2,X_2) &= a^x_{d_1}X_1^{d_1}+a^x_{d_1-1}X_1^{d_1-1}X_2+\cdots+a^x_{1}X_1X_2^{d_1-1}+a^x_0X_2^{d_2}, \\ \notag
g_2(X_1+xX_2,X_2) &= b^x_{d_2}X_1^{d_2}+b^x_{d_2-1}X_1^{d_2-1}X_2+\cdots+b^x_{1}X_1X_2^{d_2-1}+b^x_0X_2^{d_2},
\end{align*}
where the $a_i^x$'s and the $b_j^x$'s are polynomials in $R[T,x]$. By the base change for\-mu\-la for subresultants \cite{Hong97}, we have the equalities in $R[T,x]$:
\begin{align*}
\Res(g_1(X_1+xX_2,X_2),g_2(X_1+xX_2,X_2)) &= \Res(g_1(X_1,X_2),g_2(X_1,X_2)), \\
\SRes_{0}(g_1(X_1+xX_2,X_2),g_2(X_1+xX_2,X_2)) &= \SRes_{0}(g_1(X_1,X_2),g_2(X_1,X_2)).
\end{align*}
Therefore, expanding the determinant
\begin{multline*}
\Res(g_1(X_1+xX_2,X_2),g_2(X_1+xX_2,X_2))= \\
\left|
\begin{array}{ccccccc}
  a^x_{d_1} & 0 & \cdots & 0 & b^x_{d_2} & 0 & 0 \\
  a^x_{d_1-1} & a^x_{d_1} &      & \vdots & b^x_{d_2-1} & \ddots &    0   \\
  \vdots &  & \ddots &  0  & \vdots &  & b^x_{d_2}      \\
  a^x_0 &  &  & a^x_{d_1} & b^x_{1} &  & b^x_{d_2-1}  \\ 
  0 & a^x_0  &  & a^x_{d_1-1} & b^x_{0} &  &   \vdots  \\
 \vdots&   & \ddots  & \vdots & 0 & \ddots & b^x_{1}    \\
   0 &  \cdots & 0  & a^x_0 & 0 & 0  &   b^x_0  \\
\end{array}\right|
\end{multline*}
with respect to its two last rows, we get
$$ 
\Res(g_{1},g_{2}) = (-1)^{d_1+d_2}
\left|\begin{array}{ccc}
a^x_0 & b^x_0   \\ 
a^x_1 & b^x_1  \\ 
\end{array}\right|\SRes_{0}(g_{1},g_{2}) 
+ (a^x_{0})^{2} \Delta_{1} + a^x_{0} b^x_{0} \Delta_{2} + (b^x_{0})^{2} \Delta_{3},
$$
where $\Delta_{i}\ (i=1,2,3)$ are polynomials in the  $a^x_{i}$'s and
$b^x_{j}$'s. Taking the derivative with respect to the variable $T$, we deduce that
\begin{equation}\label{eq:lemme}
\frac{\partial \Res(g_{1},g_{2})}{\partial T} = (-1)^{d_1+d_2}
\left|\begin{array}{ccc}
\partial_2a^x_0 & \partial_2b^x_0   \\ 
a^x_1 & b^x_1  \\ 
\end{array}\right|\SRes_{0}(g_{1},g_{2}) \text{ modulo }
(a^x_0,b^x_0)\end{equation}
in $R[T,x]$. But it is easy to check that
$$a^x_0=g_1(x,1), \ b^x_0=g_2(x,1), \ a^x_1=\frac{\partial g_1}{\partial X_1}(x,1) \  \ b^x_1= \frac{\partial g_2}{\partial X_1}(x,1)$$
and therefore to deduce that \eqref{eq:lemme} is the claimed equality.
\end{proof}

\begin{theorem}[$\mu \geq 2, d\geq 4$]\label{SRes:adjoint} For all $i=0,\ldots,d-2$, the equation $\SRes_i(p,q)=0$ defines a plane curve which is adjoint to $\Cc$.
\end{theorem}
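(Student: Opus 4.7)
My plan is to reduce the problem to the adjointness of the single curve $\SRes_0(p,q)$, which I will then establish via Lemma \ref{dREs=JSres} together with Dedekind's criterion (Proposition \ref{Dedekind}). I will start by exploiting the inertia form relations from the end of Section \ref{eqRees}: for $i = 0,\ldots,d-3$, the polynomials $X_1\SRes_i(p,q) - X_2\SRes_{i+1}(p,q)$ are inertia forms, hence moving curves following $\phi$. Setting $R_i(\ud{X}) := \SRes_i(p,q)(g(\ud{X}))$ and substituting $\ud{T} = g(\ud{X})$ gives $X_1 R_i = X_2 R_{i+1}$ in $\KK[\ud{X}]$; iterating produces a common factor $H \in \KK[\ud{X}]$ of degree $(d-1)(d-2)$ with $R_i = X_1^i X_2^{d-2-i} H$ for every $i$. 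Pulling back along $\phi$, this factorization implies $\mult_\pp(\gamma,\SRes_i) \geq \mult_\pp(\gamma,\SRes_0)$ at any branch $\gamma$ at $\pp = \phi(a)$ whose pre-image satisfies $a \neq (1:0)$, and $\mult_\pp(\gamma,\SRes_i) \geq \mult_\pp(\gamma,\SRes_{d-2})$ when $a = (1:0)$. Since the involution $X_1 \leftrightarrow X_2$ exchanges $\SRes_i$ with $\SRes_{d-2-i}$ (up to sign), it suffices to show that $\SRes_0$ meets the adjoint bound at every branch whose pre-image is not $(1:0)$; the remaining branches are handled by the same argument applied in the swapped coordinates.

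For this, I will fix $Q \in \PP^2 \setminus \Cc$ and apply Lemma \ref{dREs=JSres} to each variable $T_k$. Multiplying by $Q_k$ and summing yields, modulo $(p(x,1), q(x,1))$,
$$D_Q(\ud{T}) := \sum_k Q_k \frac{\partial C}{\partial T_k} \equiv (-1)^d J_Q(x,\ud{T})\,\SRes_0(p,q),$$
where $J_Q = p_Q(x,1)\frac{\partial q}{\partial X_1}(x,1,\ud{T}) - q_Q(x,1)\frac{\partial p}{\partial X_1}(x,1,\ud{T})$ with $p_Q(\ud{X}) := p(\ud{X},Q)$. For a branch $\gamma$ at $\pp = \phi(a)$ with pre-image $a = (a_1,1)$, substituting $\ud{T} = g(x,1)$ kills the modulus by the syzygies $\sum_k p_k g_k = \sum_k q_k g_k = 0$; using the derivatives of these syzygies to rewrite $\frac{\partial p}{\partial X_1}(x,1,g(x,1)) = -V(x)$ with $V := \sum_k p_k g_k'$ (and similarly $W$ for $q$), one obtains the polynomial identity in $\KK[x]$
$$D_Q(g(x,1)) = (-1)^d J_Q^{\mathrm{br}}(x)\,R_0(x,1), \qquad J_Q^{\mathrm{br}}(x) := q_Q(x,1)V(x) - p_Q(x,1)W(x).$$
Taking $\ord_{x = a_1}$ then converts Dedekind's inequality into
$$\mult_\pp(\gamma,\Lc_Q) \geq \ord_{x = a_1} J_Q^{\mathrm{br}}(x) + 1.$$

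The main obstacle is verifying this last inequality, which I expect to hold as equality for generic $Q$ via a Taylor analysis. Set $L_Q^{(x)}(\ud{T}) := q_Q(x,1) p(x,1,\ud{T}) - p_Q(x,1) q(x,1,\ud{T}) = \sum_k \Lambda_k(x) T_k$, the moving line through $\phi(x)$ and $Q$; a direct calculation gives both $J_Q^{\mathrm{br}}(x) = L_Q^{(x)}(g'(x,1))$ and $\sum_k \Lambda_k(x) g_k(x,1) = 0$ identically. Writing $m := \mult_\pp \gamma$, the branch multiplicity condition translates into $g^{(k)}(a) \parallel g(a) = \pp$ for $k = 1,\ldots,m-1$, with $g^{(m)}(a) = \lambda_m \pp + \mu v$ for some transverse $v \not\parallel \pp$ and $\mu \neq 0$. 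Setting $L_k := \partial_x^k L_Q^{(x)}|_{x = a_1}$ (so $L_0 = L_Q$), an induction on $k \leq m - 1$ using successive derivatives of $\sum_k \Lambda_k g_k = 0$ evaluated at $x = a_1$ shows that $L_k(\pp) = 0$. Expanding $(J_Q^{\mathrm{br}})^{(j)}(a_1)$ via Leibniz then yields $0$ for $j \leq m-2$ and $\mu L_Q(v)$ at $j = m-1$, which is non-zero for generic $Q$ (since then $L_Q$ is not tangent to $\gamma$). For such $Q$ one has $\mult_\pp(\gamma,\Lc_Q) = m$, so the required inequality holds with equality, completing the argument.
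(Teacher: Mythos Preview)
Your proof is correct and follows the same strategy as the paper: reduce all the $\SRes_i$ to $\SRes_0$ via the inertia-form relations (your explicit factorization $R_i = X_1^i X_2^{d-2-i}H$ being a clean packaging of the paper's incremental argument), then verify Dedekind's criterion for $\SRes_0$ using Lemma~\ref{dREs=JSres}. The only real difference is in the order computation: the paper fixes coordinates ($\pp=(0{:}0{:}1)$, $\qq=(0{:}1{:}0)$, pre-image at the origin) and computes $\ord_\pp J(x)$ directly from the $\mu$-basis identity $\phi_1=\left|\begin{smallmatrix} p_2 & q_2 \\ p_3 & q_3 \end{smallmatrix}\right|$, whereas you work coordinate-free with a generic $Q$ and reach the same conclusion through the moving-line interpretation $J_Q^{\mathrm{br}}(x)=L_Q^{(x)}(g'(x))$ together with a Leibniz/Taylor analysis.
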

\begin{proof} Recall that the curve $\Cc$ of degree $d$ is parametrized by the generically injective rational map 
$$\PP^1 \xrightarrow{\phi} \PP^2 : (X_1:X_2) \mapsto (g_1:g_2:g_3)(X_1,X_2)$$
and that $(p,q)$ stands for a $\mu$-basis of $\phi$. Hereafter, we will denote by $\Dc$ the curve defined by $\SRes_0(p,q)=0$. It is well-defined by properties of the first principal subresultant and the $\mu$-basis. We first prove that the curve $\Dc$ defined by $\SRes_0(p,q)=0$ is adjoint to $\Cc$ by using the characterization of adjoint curves given in Proposition \ref{Dedekind}. 

So let $\pp$ be a proper singular point of $\Cc$ and $\gamma$ be an irreducible branch of $\Cc$ centered at $\pp$.
By a linear change of coordinates in $\PP^2$, we can assume that $\pp$ is the origin $(0:0:1)$ and that the point $\qq=(0:1:0)$ does not belong to $\Cc$. Also,
by a linear change of coordinates in $\PP^1$, we can assume that $\phi(0:1)=\pp$; turning to the affine parameter $(x:1) \in \PP^1$ with $x \in \KK$, we have $\phi(0)=\pp$ and we can assume that 
$$\ord_\pp \phi_1(x) \leq \ord_\pp \phi_2(x), \ \ \ord_\pp \phi_3(x)\neq 0$$
where $(\phi_1(x):\phi_2(x):\phi_3(x))$ is a local parametrization of $\gamma$.

The polar curve $\Pc_\qq$ of $\Cc$ with respect to $\qq$ is the curve of equation $$\frac{\partial \Res(p,q)}{\partial T_2}(T_1,T_2,T_3)=0$$
and therefore 
$$\mult_\pp(\Pc_\qq,\gamma)= \ord_\pp \left(\frac{\partial \Res(p,q)}{\partial T_2}(\phi_1(x),\phi_2(x),\phi_3(x))\right).$$
Similarly, the line $\Lc_\qq$ joining the points $\pp$ and $\qq$ is the line of equation $T_1=0$ and hence
$$\mult_\pp(\Lc_\qq,\gamma)=\ord_\pp \phi_1(x).$$
Also, we have
$$\mult_\pp(\Dc_\qq,\gamma)= \ord_\pp \left(\SRes_0(p,q)(\phi_1(x),\phi_2(x),\phi_3(x))\right).$$
Now, set
$$J(x)=\left|
\begin{array}{cc}
p_2(x) & q_2(x) \\
\sum_{i=1}^3 p_i'(x)\phi_i(x) & \sum_{i=1}^3 q_i'(x)\phi_i(x) \\ 
\end{array}
\right| \in \KK[x]
$$
where the notation $f'(x)$ stands for the derivative of the polynomial $f(x)$ with respect to the variable $x$. 
Since $(p,q)$ is a $\mu$-basis of $\phi$, we have $\sum_{i=1}^3p_i(x)\phi_i(x)=0$ and $\sum_{i=1}^3q_i(x)\phi_i(x)=0$. Therefore, Lemma \ref{dREs=JSres} shows that
$$\mult_\pp(\Dc,\gamma) = \mult_\pp(\Pc_\qq,\gamma) - \ord_\pp J(x).$$
Since $\phi_1(x)=\left| \begin{array}{cc} p_2(x) & q_2(x) \\ p_3(x) & q_3 (x) \end{array}\right|$, we have 
$$\phi_1'(x)=\left| \begin{array}{cc} p_2'(x) & q_2'(x) \\ p_3(x) & q_3 (x) \end{array}\right| +
\left| \begin{array}{cc} p_2(x) & q_2(x) \\ p_3'(x) & q_3'(x) \end{array}\right|.$$
Furthermore, the equality $\sum_{i=1}^3 p_i(x)\phi_i(x)=0$ shows that we have $\ord_\pp p_3(x) \geq \ord_\pp \phi_1(x)$ since $\ord_\pp \phi_2(x)\geq \ord_\pp \phi_1(x)$. Similarly, $\ord_\pp q_3(x) \geq \ord_\pp \phi_1(x)$ and we deduce that
$$\ord_\pp \left| \begin{array}{cc} p_2(x) & q_2(x) \\ p_3'(x) & q_3'(x) \end{array}\right| = \ord_\pp \phi_1(x) -1.$$
But
$$J(x)=\left|\begin{array}{cc}
p_2(x) & q_2(x) \\
p_1'(x) & q_1'(x) \\ 
\end{array}
\right| \phi_1(x)
+
\left|\begin{array}{cc}
p_2(x) & q_2(x) \\
p_2'(x) & q_2'(x) \\ 
\end{array}
\right| \phi_2(x)
+
\left|\begin{array}{cc}
p_2(x) & q_2(x) \\
p_3'(x) & q_3'(x) \\ 
\end{array}
\right| \phi_3(x)$$
and since $\ord_\pp(\phi_3)=0$ we get $\ord_\pp J(x)=\ord_\pp \phi_1(x) -1$. Finally, we deduce that
$$\mult_\pp(\Dc,\gamma) = \mult_\pp(\Pc_\qq,\gamma)-\mult_\pp(\Lc_\qq,\gamma) + 1$$
that proves that $\Dc$ is adjoint to $\Cc$ by Proposition \ref{Dedekind}.

To finish the proof, we need to consider the equations $\SRes_i(p,q)=0$ for all $i=1,\ldots,d-2$. As observed in Section \ref{eqRees}, under the hypothesis $\mu\geq 2$ the polynomials 
$X_1\SRes_i(p,q)-X_2\SRes_{i+1}(p,q)$ are inertia forms of $(p,q)$ with respect to the ideal $(X_1,X_2)$. It follows that for all $i=0,\ldots,d-3$, there exist an integer $N_i$ such that
$$X_1^{N_i}(X_1\SRes_i(p,q)-X_2\SRes_{i+1}(p,q)) \in (p,q) \subset \KK[\underline{X}][\underline{T}]$$
$$X_2^{N_i}(X_1\SRes_i(p,q)-X_2\SRes_{i+1}(p,q)) \in (p,q) \subset \KK[\underline{X}][\underline{T}]$$
From the properties of the curve $\Dc$ of equation $\SRes_0(p,q)=0$ we just proved, we deduce incrementally that for all $i=1,\ldots,d-2$, the equation $\SRes_{i}(p,q)=0$ defines an algebraic curve, say $\Dc_i$, such that
$$ \mult_\pp (\Dc_i,\gamma) \geq \mult_\pp(\Dc,\gamma) $$
(notice that it is actually almost always an equality) at each proper singular point $\pp$ of the curve $\Cc$. 
\end{proof}


We are now ready to state results on the relation between 
adjoint pencils on $\Cc$ and moving curves following $\phi$ of degree 1 in $X_1,X_2$ and degree $d-2$ (resp.~$d-1$) in $T_1,T_2,T_3$.

\begin{corollary}[$\mu=2, d=4$]\label{u2d4}
Any non-zero element in $\Lc_{2}(\phi)$ or $\Lc_{3}(\phi)$ gives an adjoint pencil on $\Cc$.
\end{corollary}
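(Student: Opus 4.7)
The plan is to combine the explicit descriptions of $H^0_\mm(B)_1$ obtained in Section \ref{inform} with the key Theorem \ref{SRes:adjoint}. Specifically, since $\mu=2$ and $d=4$ we have $d_1=d_2=2$ and $\delta=2$, so by equations \eqref{nu=1:d=2} (which correspond to Theorem \ref{easycase} applied in this range), $H^0_\mm(B)_1$ is a free $A$-module of rank $2$ with basis the two Sylvester forms
\[
\sylv_{1,0}(p,q)=\SRes_0(p,q)\,X_1-\SRes_1(p,q)\,X_2,\qquad \sylv_{0,1}(p,q)=\SRes_2(p,q)\,X_2-\SRes_1(p,q)\,X_1,
\]
both of $T$-degree $\delta=2$ (no quotient is taken in $X$-degree $1$ since $(p,q)$ is generated in $X$-degree $2$). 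Hence $\Lc_2(\phi)$ is the $\KK$-vector space spanned by $\sylv_{1,0}(p,q)$ and $\sylv_{0,1}(p,q)$, while $\Lc_3(\phi)$ is spanned by $T_k\sylv_{i,j}(p,q)$ for $k=1,2,3$ and $(i,j)\in\{(1,0),(0,1)\}$.

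Next, I would write a general non-zero element of $\Lc_2(\phi)$ as $F=\alpha\,\sylv_{1,0}(p,q)+\beta\,\sylv_{0,1}(p,q)$ with $(\alpha,\beta)\in\KK^2\setminus\{0\}$, and rearrange it in the pencil form
\[
F=X_1\bigl(\alpha\SRes_0(p,q)-\beta\SRes_1(p,q)\bigr)+X_2\bigl(\beta\SRes_2(p,q)-\alpha\SRes_1(p,q)\bigr).
\]
Likewise, a non-zero element of $\Lc_3(\phi)$ can be written as $L(\ud{T})\sylv_{1,0}(p,q)+M(\ud{T})\sylv_{0,1}(p,q)$ with $L,M\in A_1$, yielding a pencil of the shape $X_1 D_1(\ud{T})+X_2 D_2(\ud{T})$ where $D_1=L\SRes_0-M\SRes_1$ and $D_2=M\SRes_2-L\SRes_1$. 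In both cases, the coefficients of $X_1$ and $X_2$ are $\KK$-linear combinations (in the $\Lc_2$ case) or sums of products of a linear form with a subresultant (in the $\Lc_3$ case) of the curves $\SRes_i(p,q)=0$, $i=0,1,2$, each of which is adjoint to $\Cc$ by Theorem \ref{SRes:adjoint}.

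The remaining step is a routine \emph{stability} observation: the class of plane curves of fixed degree that are adjoint to $\Cc$ is stable under addition of defining equations and under multiplication by a linear form. Indeed, by Proposition \ref{Dedekind} the adjoint property is equivalent to the local order bound $\mult_\pp(\gamma,\Dc)\geq\mult_\pp(\gamma,\Pc_\qq)-\mult_\pp(\gamma,\Lc_\qq)+1$ at each branch $\gamma$ of $\Cc$ centered at a proper singularity $\pp$. Since $\ord_\pp(F_1+F_2)\geq\min(\ord_\pp F_1,\ord_\pp F_2)$ and $\ord_\pp(L\cdot F)=\ord_\pp L+\ord_\pp F\geq\ord_\pp F$, this bound is preserved by $\KK$-linear combinations of equations of the same degree and by multiplication by a homogeneous form. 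Applying this to the description above shows that both $D_1$ and $D_2$ define adjoint curves to $\Cc$ (whenever non-zero), and consequently every member $\lambda D_1+\mu D_2=0$ of the associated pencil is adjoint, so $F$ gives an adjoint pencil on $\Cc$.

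No step is particularly delicate; the only point that deserves care is confirming that sums of defining polynomials of adjoint curves remain adjoint, which is precisely why we rely on Proposition \ref{Dedekind} rather than on the intricate inductive Definition \ref{def:adjoint} involving infinitely near singularities. Once this is in hand, the statement reduces to a direct substitution into the explicit $\KK$-bases of $\Lc_2(\phi)$ and $\Lc_3(\phi)$ arising from Theorem \ref{easycase} combined with Theorem \ref{SRes:adjoint}.
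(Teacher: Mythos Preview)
Your proof is correct and follows essentially the same route as the paper's: identify $\Lc_2(\phi)$ and $\Lc_3(\phi)$ via the $A$-basis $\{\sylv_{(1,0)}(p,q),\sylv_{(0,1)}(p,q)\}$ of $H^0_\mm(B)_1$ coming from Theorem~\ref{easycase}, express these Sylvester forms through the subresultants, and conclude from Theorem~\ref{SRes:adjoint}. The paper leaves implicit the stability step (that linear combinations and products by linear forms of adjoint equations remain adjoint), whereas you justify it explicitly via the order inequalities in Proposition~\ref{Dedekind}; this is a welcome clarification rather than a genuinely different approach.
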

\begin{proof}
  By Section \ref{eqRees}, if $d=4$ we know that the two moving curves
$$\sylv_{0,1}(p,q)=
\left| 
\begin{array}{cc}
U_0 & U_2 \\
V_0 & V_2
 \end{array}
\right| X_1 +
\left| 
\begin{array}{cc}
U_1 & U_2 \\
V_1 & V_2
 \end{array}
\right| X_2=\SRes_2(p,q)X_2-\SRes_1(p,q)X_1$$
$$\sylv_{1,0}(p,q)=
\left| 
\begin{array}{cc}
U_0 & U_1 \\
V_0 & V_1
 \end{array}
\right| X_1+
\left| 
\begin{array}{cc}
U_0 & U_2 \\
V_0 & V_2
 \end{array}
\right| X_2=\SRes_0(p,q)X_1-\SRes_1(p,q)X_2
$$
form a system of generators of $\Lc_{2}(\phi)$ over $\KK$ and that $$\Lc_{3}(\phi)=\left\langle \left(T_i\,\sylv_{(0,1)}(p,q)\right)_{i=1,2,3},\left(T_i\,\sylv_{(1,0)}(p,q)\right)_{i=1,2,3} \right\rangle_\KK.$$ 
Therefore, the claimed result follows from that fact that  $\SRes_i(p,q)=0$ defines a curve adjoint to $\Cc$ for all $i=0,1,2$.
\end{proof}

The case $\mu=2, d>4$ is more intricate. Following Section \ref{Mnu}, we have to consider the $(d-2)\times (d-3)$-matrix
$$
M_1(p,q)=\underbrace{\left(\begin{array}{ccc}
U_0 &  & 0 \\
 & \ddots &  \\
U_1 & &  U_0 \\
  & \ddots & \\ 
U_{2} & & U_1 \\
 & \ddots & \\
 & & U_{2}
\end{array}\right.
}_{d-4}
\underbrace{\left.\begin{array}{c}
 \\
\vdots \\
 \\
q_\beta(X) \\
\\
\vdots \\
\\
\end{array}\right).}_{1}
$$
Its first $(d-4)$ columns are the coefficients of the polynomials 
$$X_1^{d-5}p, X_1^{d-6}X_2p, \ldots, X_1X_2^{d-6}p, X_2^{d-5}p$$
in the monomial basis $X_1^{d-3},X_1^{d-2}X_2,\ldots,X_2^{d-3}$ and its last column contains   
the coefficients of the polynomials $\sum_{|\beta|=d-3} q_{\beta}(X)Y^\beta$ in the monomial basis $Y_1^{d-3}$, $Y_1^{d-2}Y_2, \ldots, Y_2^{d-3}$. 
We proved in Theorem \ref{prop-cas2} that the maximal minors $\Delta_{i}$, $i\in \{1,\ldots,d-2\}$, of $M_1(p,q)$ obtained by removing the $i^{\text{th}}$ row are elements in $\Lc_{2}(\phi)$. Moreover, we proved in Lemma \ref{SRes:mu=2} that 
 $$\Delta_{i}=X_1\SRes_{i}(p,q)-X_{2}\SRes_{i+1}(p,q)$$
for all $i=1,\ldots,d-2$. We deduce the

\begin{corollary}[$\mu=2, d>4$]
All the determinants $\Delta_i$, $i\in \{1,\ldots,d-2\}$, give adjoint pencils $\Cc$. 
Moreover, if  $V(U_0,U_1,U_2)=\emptyset \subset \PP^2_\KK$ then
any non-zero element in $\Lc_{2}(\phi)$ or $\Lc_{3}(\phi)$ gives an adjoint pencil on $\Cc$.
\end{corollary}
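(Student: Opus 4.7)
The plan is to reduce both parts to Theorem~\ref{SRes:adjoint} (each $\SRes_i(p,q)=0$ is an adjoint curve of degree $d-2$) combined with the generator descriptions from Section~\ref{eqRees}. I will use systematically the elementary fact that, for any fixed $\pp$ and branch $\gamma$, $\mult_\pp(\gamma,\alpha D_1+\beta D_2)\geq\min(\mult_\pp(\gamma,D_1),\mult_\pp(\gamma,D_2))$, so that the set of curves of a given degree satisfying the inequality of Proposition~\ref{Dedekind} is a $\KK$-linear subspace.

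For the first assertion, by Lemma~\ref{SRes:mu=2} we have $\Delta_i=X_1\SRes_i(p,q)-X_2\SRes_{i+1}(p,q)$. Theorem~\ref{SRes:adjoint} says that both $\SRes_i(p,q)=0$ and $\SRes_{i+1}(p,q)=0$ cut out curves adjoint to $\Cc$ of degree $d-2$. By the linearity remark above, for every $(a:b)\in\PP^1_\KK$ the curve $a\SRes_i(p,q)-b\SRes_{i+1}(p,q)=0$ is adjoint, so $\Delta_i$ is a one-parameter family of adjoint curves in the sense of the definition of an adjoint pencil given in Section~\ref{adj}.

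For the second assertion, I first observe that since $p$ has $X$-degree $2$ and $q$ has $X$-degree $d-2\geq 3$, the ideal $(p,q)\subset\KK[\ud{X},\ud{T}]$ contains no element of $X$-degree $1$; hence $\Lc_{d-2}(\phi)$ and $\Lc_{d-1}(\phi)$ are canonically identified with the $T$-homogeneous pieces $[H^0_\mm(B)_1]_{d-2}$ and $[H^0_\mm(B)_1]_{d-1}$ respectively. Under the hypothesis $V(U_0,U_1,U_2)=\emptyset$, the last bullet of Section~\ref{eqRees} (case $\mu=2$, $d\geq 5$) tells us that the $\Delta_i$, $i=0,\ldots,d-3$, generate $H^0_\mm(B)_1$ as an $A$-module. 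Since each $\Delta_i$ is $T$-homogeneous of degree $d-2$ (the subresultants of the $T$-linear polynomials $p,q$ have $T$-degree $d-2$), the piece $[H^0_\mm(B)_1]_{d-2}$ is the $\KK$-span of $\{\Delta_i\}_i$, so the first assertion concludes the case of $\Lc_{d-2}(\phi)$; the piece $[H^0_\mm(B)_1]_{d-1}$ is the $\KK$-span of $\{T_k\Delta_i\}_{k,i}$.

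It remains to check that each $T_k\Delta_i=X_1(T_k\SRes_i(p,q))-X_2(T_k\SRes_{i+1}(p,q))$ is an adjoint pencil of degree $d-1$. The vanishing locus of $T_k\SRes_j(p,q)$ is the union of the line $\{T_k=0\}$ with the adjoint curve $\SRes_j(p,q)=0$; since intersection multiplicities with any branch $\gamma$ are additive for products, this union still satisfies the criterion of Proposition~\ref{Dedekind} and is therefore adjoint of degree $d-1$. Applying again the linearity of the adjoint condition, any $\KK$-linear combination $\sum c_{i,k}T_k\Delta_i$ has the form $X_1E_1(\ud{T})+X_2E_2(\ud{T})$ with both $E_1=0$ and $E_2=0$ adjoint to $\Cc$, which gives an adjoint pencil. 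The delicate point in this plan is the last one: one must make sure that multiplying an adjoint curve by a linear form $T_k$ really preserves adjointness at all singular points, proper \emph{and} infinitely near, which is precisely what the characterization of Proposition~\ref{Dedekind} guarantees since adding the line contributes only a non-negative amount to $\mult_\pp(\gamma,\cdot)$.
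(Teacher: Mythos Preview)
Your proof is correct and follows exactly the approach the paper intends: the corollary is stated without an explicit proof because all the ingredients are assembled immediately before it (Lemma~\ref{SRes:mu=2} expressing $\Delta_i$ via subresultants, Theorem~\ref{SRes:adjoint} for the adjointness of each $\SRes_i(p,q)$, and the generator description from Section~\ref{eqRees} under $V(U_0,U_1,U_2)=\emptyset$), and the argument is the direct analogue of the proof of Corollary~\ref{u2d4}. Your explicit verification via Proposition~\ref{Dedekind} that adjointness is preserved under $\KK$-linear combinations and under multiplication by a linear form $T_k$ is the right way to fill in the details the paper leaves implicit; note also that you correctly read $\Lc_2(\phi),\Lc_3(\phi)$ in the statement as $\Lc_{d-2}(\phi),\Lc_{d-1}(\phi)$, a typo carried over from the $d=4$ case.
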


Notice that the hypothesis on the depth of the ideal $(U_0,U_1,U_2)$ can not be avoided since XiaHong Jia found an example of an element in $\Lc_{3}(\phi)$, with $\mu=2$, $d=6$, which is not an adjoint pencil on $\Cc$. For this example, $p$ does not depend on the variable $T_3$ and hence $\mathrm{depth}_{\KK[\ud{T}]}(U_0,U_1,U_2)\leq 2$. Also, as mentioned earlier, notice that this condition corresponds to the absence of a (proper) singularity of multiplicity  $d-2$, the maximum possible value.

\medskip

Finally, let us consider the case $\mu \geq 3$. Since $1\leq \mu \leq d-\mu$, we must have $d\geq 6$. In Section \ref{sec:cas3} we have identified the collection of degree 1 inertia forms $\left(D_{i}(X_1,X_2)\right)_{i=0,\ldots,d-3}$ that satisfy to the equalities (see Corollary \ref{nu=1:cas3})
$$D_i=X_2\SRes_{d-2-i}(p,q)-X_1\SRes_{d-3-i}(p,q).$$

\begin{corollary}[$\mu\geq 3$] The polynomials $D_i(X_1,X_2)$, $i=0,\ldots,d-3$, give adjoint pencils on $\Cc$. Moreover, if 
\begin{equation}\label{lasteq}
V\left(\Det_{d-4}\left( C_{d-\mu-3}\oplus C_{\mu-3} \xrightarrow{(p \ q)} C_{d-3} \right)\right)=\emptyset \subset \PP^2_\KK
\end{equation}
then any element in $\Lc_{d-2}(\phi)$ or $\Lc_{d-1}(\phi)$ give an adjoint pencil on $\Cc$.
\end{corollary}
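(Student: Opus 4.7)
The plan is to combine Corollary \ref{nu=1:cas3}, which yields the identity $D_i = X_2\SRes_{d-2-i}(p,q) - X_1\SRes_{d-3-i}(p,q)$, with Theorem \ref{SRes:adjoint}, which asserts that every subresultant curve $\SRes_j(p,q)=0$ is adjoint to $\Cc$. The overarching observation I would rely on throughout is that, by Proposition \ref{Dedekind}, adjointness is a \emph{linear} condition on the defining polynomial of a degree-$m$ curve: the Dedekind inequality $\mult_\pp(\Dc,\gamma)\geq\mult_\pp(\Pc_\qq,\gamma)-\mult_\pp(\Lc_\qq,\gamma)+1$ amounts to a lower bound on the order of vanishing of $D(\phi_1,\phi_2,\phi_3)$ at each singular parameter value, which is manifestly linear in $D$. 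Consequently, any $\KK$-linear combination of degree-$m$ adjoint curves is itself adjoint.

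For the first assertion, I would specialize $(X_1:X_2)\in\PP^1_\KK$ in the identity for $D_i$: the result is a $\KK$-linear combination of the adjoint curves $\SRes_{d-2-i}(p,q)=0$ and $\SRes_{d-3-i}(p,q)=0$, which is itself adjoint by the linearity principle. This shows that $D_i$ gives an adjoint pencil on $\Cc$.

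For the second assertion, I would assume \eqref{lasteq} and invoke Corollary \ref{nu=1:cas3} to conclude that $(D_0,\ldots,D_{d-3})$ is an $A$-generating family for $H^0_\mm(B)_1$. Each $D_i$ is bihomogeneous of bidegree $(1,d-2)$ in $(\ud{X},\ud{T})$, since the Sylvester matrix underlying $\SRes_j(p,q)$ has size $d-2$ and entries linear in $\ud{T}$. Because $\mu\geq 3$, the ideal $(p,q)$ contains no element of $\ud{X}$-degree $1$, so Proposition \ref{pqTF} identifies $\Lc_{d-2}(\phi)$ and $\Lc_{d-1}(\phi)$ with the corresponding bihomogeneous components of $H^0_\mm(B)$. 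Matching $\ud{T}$-degrees will then force
$$\Lc_{d-2}(\phi) = \langle D_0,\ldots,D_{d-3}\rangle_\KK \quad \text{and} \quad \Lc_{d-1}(\phi) = \langle T_j D_i : 1\leq j\leq 3,\ 0\leq i\leq d-3\rangle_\KK.$$
The conclusion for $\Lc_{d-2}(\phi)$ is then immediate from the first assertion by $\KK$-linearity.

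The main obstacle lies in handling $\Lc_{d-1}(\phi)$, where each generator $T_j D_i = X_2(T_j\SRes_{d-2-i})-X_1(T_j\SRes_{d-3-i})$ is a pencil whose base curves are the reducible curves $\{T_j=0\}\cup\{\SRes_k(p,q)=0\}$ of degree $d-1$. The delicate step is to verify that these reducible curves remain adjoint to $\Cc$. Here Proposition \ref{Dedekind} will be crucial: branch-intersection multiplicities are additive on unions, so multiplication by the linear form $T_j$ can only \emph{increase} the left-hand side of the Dedekind inequality at every proper singularity of $\Cc$, and the required inequality is thus preserved. This approach conveniently bypasses all the bookkeeping concerning virtual multiplicities and infinitely near singularities. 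Combined once more with $\KK$-linearity, this will complete the proof for $\Lc_{d-1}(\phi)$.
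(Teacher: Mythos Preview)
Your proposal is correct and follows precisely the approach the paper intends: the corollary is stated without proof in the paper, and your argument---combining the subresultant expression from Corollary~\ref{nu=1:cas3}, the adjointness of subresultant curves from Theorem~\ref{SRes:adjoint}, and the linearity of the Dedekind criterion (Proposition~\ref{Dedekind})---is exactly the intended derivation. Your treatment of $\Lc_{d-1}(\phi)$ via the observation that multiplication by $T_j$ only increases branch-intersection multiplicities is more explicit than anything the paper writes down, but it is the right way to fill in that step.
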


We point out that XiaHong Jia found an example of an element in $\Lc_{d-2}(\phi)\oplus_\KK \Lc_{d-1}(\phi)$, with $\mu=3$, $d=7$, which is not an adjoint pencil of $\Cc$. For this example the equality \eqref{lasteq} does not hold (more precisely, $p$ does not depend on the variable $T_3$ for this example). 

\subsection{Abhyankhar's Taylor resultant}

In its book \cite[Lecture 19]{Ab}, Abhyankar defines the \emph{Taylor resultant} of two polynomials $f(t), g(t) \in \KK[t]$ as the resultant that eliminates the variable $t$ from the two polynomials 
\begin{eqnarray}\label{TRf}
	\frac{f(t)-f(s)}{t-s}&=&f'(s)+\frac{f''(s)}{2!}t+\frac{f'''(s)}{3!}t^2+\cdots \\
	\label{TRg}
	\frac{g(t)-g(s)}{t-s}&=&g'(s)+\frac{g''(s)}{2!}t+\frac{g'''(s)}{3!}t^2+\cdots
\end{eqnarray}
(notice that the above equalities hold if $\KK$ has characteristic zero). We will denote it by $\Delta(t) \in \KK[t]$. As stated in the theorem page 153, this Taylor resultant is a generator of the conductor of  $k[f(t),g(t)]$ in $k[t]$. In particular, assuming $(f(t),g(t))$ to be a parametrization of the irreducible plane curve $\Cc$, it yields the singularities of $\Cc$ counted properly, that is to say that
$$\Delta(t)= \gamma \prod_{j=1}^l (t-\gamma_j)^{\epsilon_j}$$
where $0\neq \gamma \in \KK$, $\gamma_1,\ldots,\gamma_l$ are distinct elements in $\KK$, $\epsilon_1,\ldots,\epsilon_l$ are positive integers and it holds that
\begin{itemize}
	\item[\rm (P1)] $P=(\alpha,\beta) \in \Cc$ is a (proper) singular point on $\Cc$ if and only if $(\alpha,\beta)=(f(\gamma_j),g(\gamma_j))$ for some $j\in \{1,\ldots,l\}$
	\item[\rm (P2)] If $P=(\alpha,\beta) \in \Cc$ is a (proper) singular point on $\Cc$ then
	$$ {\sum}^{(\alpha,\beta)} \epsilon_j = {\sum}^{P} \nu_i(\nu_i-1)$$
	where $ {\sum}^{(\alpha,\beta)}$ is the sum over those $j$ for which $(\alpha,\beta)=(p(\gamma_j),q(\gamma_j))$, and ${\sum}^{P}$ is the sum over those $i$ for which the point $T_i$ either equals $P$ or is infinitely near to $P$ and has multiplicity $\nu_i$.
	\item[\rm (P3)] $\deg(\Delta(t))=(\deg(\Cc)-1)(\deg(\Cc)-2)$ if all the singular points of $\Cc$, proper as well as infinitely near, are at finite distance.
\end{itemize} 
Notice that Abhyankar used the right side of the equations \eqref{TRf} and \eqref{TRg} and hence required characteristic zero for the ground field $\KK$. In \cite{EY}, this point was overcome by considering the resultant of the left side of these equations, and the Taylor resultant was renamed the \emph{D-resultant}. Then, in \cite{Elkahoui} the D-resultant is expressed in terms of a certain subresultant.

Abhyankar's Taylor resultant, or $D$-resultant, only treats rational curves that admit a polynomial parametrization. Therefore, one can ask for a generalization of this resultant for any rational plane curve, that is to say for the case where $f(t)$ and $g(t)$ are not polynomials but rational functions in $\KK(t)$. In \cite{GRY} a partial answer to this question is proposed: If $f(t)=f_n(t)/f_d(t)$ and $g(t)=g_n(t)/g_d(t)$, the authors define the Taylor resultant as the resultant that eliminates $t$ from the two polynomials 
\begin{equation}\label{eq}
 \frac{f_n(t)f_d(s)-f_d(t)f_n(s)}{t-s}, \ \frac{g_n(t)g_d(s)-g_d(t)g_n(s)}{t-s}.
\end{equation}
However, this generalization does not represent exactly the singular points of the rational curve parametrized by $(f(t),g(t))$. Indeed, Equations \eqref{eq} introduce a symmetry that mixes the four rational curves parametrized by  $x=f(t)^{\pm 1}, y=g(t)^{\pm 1}$ (see \cite[Theorem 3.1]{GRY} for more details). 

\medskip

Using the notation \eqref{phi}, we claim that the polynomial 
\begin{equation}\label{DDres}
 \Delta(t)=\SRes_0(p,q)(g_1(t,1),g_2(t,1),g_3(t,1)) \in \KK[t]	
\end{equation}
provides an appropriate extension of the Taylor resultant to the rational case, having exactly the same properties (P1), (P2), (P3). Indeed, in the proof of Theorem \ref{SRes:adjoint} it is proved that the inequalities of Proposition \ref{Dedekind} are all equalities when the curve $\Dc$ is the curve defined by the polynomial $\SRes_0(p,q)(T_1,T_2,T_3)$. This guarantees that its intersection with the curve $\Cc$ and  is the adjoint divisor of $\Cc$ and therefore that the properties (P1), (P2), (P3) hold.

It should be noticed that the polynomial \eqref{DDres} already appeared in the paper 
\cite{CS01} where the authors showed that its roots are in correspondence with the proper singularities of $\Cc$. However, they did not prove the adjunction property of this polynomial.

\section{Acknowledgments}
I would like to thank David Cox, Carlos D'Andrea, Ron Goldman for helpful conversations on moving curve ideals and XiaHong Jia for all the interesting examples she computed. I am especially grateful to Eduardo Casas-Alvero for fruitful discussions on adjoint curves, but also for its very useful comments on a preliminary version of this manuscript.


\end{document}